\documentclass[oneside,a4paper, english, 11pt]{amsart}

\usepackage[T1]{fontenc}
\usepackage[latin9]{inputenc}
\usepackage{amsmath}

\linespread{1.4}

\makeatletter
\newenvironment{abstracts}{%
  \ifx\maketitle\relax
    \ClassWarning{\@classname}{Abstract should precede
      \protect\maketitle\space in AMS document classes; reported}%
  \fi
  \global\setbox\abstractbox=\vtop \bgroup
    \normalfont\Small
    \list{}{\labelwidth\z@
      \leftmargin3pc \rightmargin\leftmargin
      \listparindent\normalparindent \itemindent\z@
      \parsep\z@ \@plus\p@
      
      \itemsep\medskipamount
    }%
}{%
  \endlist\egroup
  \ifx\@setabstract\relax \@setabstracta \fi
}

\newcommand{\abstractin}[1]{%
  \otherlanguage{#1}%
  \item[\hskip\labelsep\scshape\abstractname.]%
}
\makeatother
 \theoremstyle{plain}
\newtheorem{thm}{Theorem}[section]
\theoremstyle{plain}
  \newtheorem{prop}[thm]{Proposition}
\theoremstyle{plain}
 \newtheorem{lemma}[thm]{Lemma}
\theoremstyle{plain}

\theoremstyle{plain}
\newtheorem{cor}[thm]{Corollary}
\theoremstyle{definition}
  \newtheorem{defn}[thm]{Definition}
 \theoremstyle{definition}
  
  \newtheorem{exam}[thm]{Example}
\theoremstyle{remark}
\newtheorem{rmk}[thm]{Remark}
\numberwithin{equation}{section}
 
\usepackage{amsmath}
\usepackage{amssymb}
\usepackage{amscd}
\usepackage{amsthm}
\usepackage{array}
\usepackage{hyperref}
\usepackage[arrow,matrix,tips]{xy}

\usepackage{stmaryrd}
\usepackage{url}
\usepackage{color}

\pdfpageheight\paperheight
\pdfpagewidth\paperwidth
\topmargin -1in
\headheight 0in
\headsep 0in
\textheight 8.5 in
\textwidth 6.5in
\oddsidemargin 0in
\evensidemargin 0in
\headheight 77pt
\headsep 0.25in

\newcommand{\Z}{\mathbb{Z}}

\newcommand{\Q}{\mathbb{Q}}

\newcommand{\Qp}{\mathbb{Q}_p}

\newcommand{\F}{\mathbb{F}}

\newcommand{\fM}{\mathfrak{M}}

\newcommand{\cD}{\mathcal{D}}
\newcommand{\cE}{\mathcal{E}}
\newcommand{\cF}{\mathcal{F}}
\newcommand{\cG}{\mathcal{G}}

\newcommand{\cM}{\mathcal{M}}

\newcommand{\cO}{\mathcal{O}}
\newcommand{\cP}{\mathcal{P}}

\newcommand{\eps}{\varepsilon}

\newcommand{\phz}{\varphi}

\newcommand{\La}{\Lambda}

\newcommand{\Zp}{\mathbb{Z}_p}

\newcommand{\Gal}{\mathrm{Gal}}
\newcommand{\Hom}{\mathrm{Hom}}

\newcommand{\Res}{\mathrm{Res}}
\newcommand{\Ind}{\mathrm{Ind}}

\newcommand{\GL}{\mathrm{GL}}

\newcommand{\Spec}{\mathrm{Spec}\ }

\DeclareMathOperator{\Fil}{Fil}
\DeclareMathOperator{\Id}{id}

\DeclareMathOperator{\gr}{gr}

\DeclareMathOperator{\Gr}{Gr}
\DeclareMathOperator{\Fl}{Fl}
\DeclareMathOperator{\Nilp}{Nilp}
\DeclareMathOperator{\dR}{dR}
\DeclareMathOperator{\Adm}{Adm}
\DeclareMathOperator{\cris}{cris}

\newcommand{\ra}{\rightarrow}

\newcommand{\iarrow}{\hookrightarrow}

\newcommand{\rhobar}{\overline{\rho}}

\usepackage[french, english]{babel}
\makeatother
\title{Kisin modules with descent data and parahoric local models}

\author{Ana Caraiani}
\address{Mathematisches Institut der Universit\"at Bonn, Endenicher Allee 60 \\
	D-53115 Bonn, Germany}
\email{caraiani@math.uni-bonn.de}

\author{Brandon Levin}
\address{Department of Mathematics, Eckhart Hall, University of Chicago, Chicago, Illinois 60637, USA}
\email{bwlevin@math.uchicago.edu}

\begin{document}

\keywords{Galois deformations, integral $p$-adic Hodge theory, Kisin modules, local models of Shimura varieties, affine flag varieties}

\subjclass{11F80, 14M15, 14D20} 

\begin{abstracts}
\abstractin{english}
We construct a moduli space $Y^{\mu, \tau}$ of Kisin modules with tame descent datum $\tau$ and with $p$-adic Hodge type $\leq\mu$, for some finite extension $K/\Qp$.  We show that this space is smoothly equivalent to the local model for $\Res_{K/\Qp} \GL_n$, cocharacter $\{ \mu \}$, and parahoric level structure.  We use this to construct the analogue of Kottwitz-Rapoport strata on the special fiber $Y^{\mu, \tau}$ indexed by the $\mu$-admissible set. We also relate $Y^{\mu, \tau}$ to potentially crystalline Galois deformation rings.

\abstractin{french}
Nous construisons un espace de modules $Y^{\mu, \tau}$ de modules de Kisin avec donn\'e de descente mod\'er\'ee $\tau$ et type de Hodge $p$-adique $\mu$, pour une extension finie $K/\Qp$. Nous d\'emontrons une \'equivalence lisse entre $Y^{\mu, \tau}$ et le mod\`ele local pour la restriction de scalaires $\Res_{K/\Qp} \GL_n$, co-charact\`ere $\{ \mu \}$ et structure de niveau parahorique. Cette \'equivalence est ensuite utilis\'ee pour construire l'analogue de la stratification de Kottwitz-Rapoport sur la fibre sp\'eciale de $Y^{\mu, \tau}$, param\'etr\'ee par l'ensemble des \'el\'ements $\mu$-admissibles. Nous d\'ecrivons aussi la relation entre $Y^{\mu,\tau}$ et l'espace de d\'eformations galoisiennes potentiellement cristallines.
\end{abstracts}

\selectlanguage{english}
\maketitle
\tableofcontents

\section{Introduction}

Let $K/\Qp$ be a finite extension.  Kisin~\cite{Fcrystals} showed that the category of finite flat commutative group schemes over $\cO_K$ killed by a power of $p$ is equivalent to the category of \emph{Breuil-Kisin modules} of height $\leq 1$.  While the former do not naturally live in families, one can work with Breuil-Kisin modules with coefficients and study their moduli.  The landmark paper~\cite{MFFGS} uses moduli of Breuil-Kisin modules to construct resolutions of flat deformation rings with stunning consequences for modularity lifting theorems and applications to the Fontaine-Mazur conjecture. The main result of~\cite{MFFGS} is a modularity lifting theorem in the potentially Barsotti-Tate case. One of the key points is a rather surprising connection to the theory of \emph{local models of Shimura varieties}. Kisin showed that the singularities of the moduli space of Breuil-Kisin modules of rank $n$ (with fixed $p$-adic Hodge type) could be related to the singularities of local models for the group $\Res_{K/\Qp} \GL_n$ (with maximal parahoric level) which had been studied by \cite{PR2}.   

Kisin's result is globalized in~\cite{PRcoeff}, where Pappas and Rapoport construct a global (formal) moduli stack $X^{\mu}$ of Kisin modules with $p$-adic Hodge type $\mu \in (\Z^n)^{\Hom(K, \overline{\Q}_p)}$.  They link the space $X^{\mu}$ via smooth maps with a (generalized) local model $M(\mu)$.  When $\mu$ is non-minuscule, $M(\mu)$ is not related to any Shimura variety but is nevertheless known to have nice geometric properties by work of Pappas-Zhu~\cite{PZ} and of the second author~\cite{LevinLM}.  $M(\mu)$ is constructed inside a mixed characteristic version of the Beilinson-Drinfeld affine Grassmannian. As a result, the nice geometric properties of $M(\mu)$ transfer to the global moduli stack $X^{\mu}$.     

While the connection between moduli of Breuil-Kisin modules and local models suffices for proving modularity lifting theorems in the potentially Barsotti-Tate case, it doesn't seem capture some of the more subtle aspects of the geometry of local deformation rings. These more subtle aspects are connected to the (geometric) Breuil-Mezard conjecture~\cite{BMconj, EGgeomBM}, to the weight part in Serre's conjecture~\cite{BDJ, GHS} and to questions about integral structures in completed cohomology~\cite{BreuilLG, EGS}. Therefore, there is considerable interest in generalizing the results of Kisin and Pappas-Rapoport. This paper extends the relationship with local models to the case of Breuil-Kisin modules equipped with \emph{tame descent data}. 


We explain the connection to integral structures in completed cohomology. One of the few situations where we have explicit presentations of local deformation rings is the case of tamely Barsotti-Tate deformations rings for $\GL_2$. Set $G_K:=\mathrm{Gal}(\bar K/ K)$ and let $I_K\subset G_K$ be the inertia subgroup. When $K/\Qp$ is unramified and $\tau:I_K \ra \GL_2(\La)$ is a (generic) tame \emph{inertial type}, then~\cite{BreuilLG, BM2, EGS} explicitly describe the potentially Barsotti-Tate deformation ring $R_{\rhobar}^{\mathrm{BT}, \tau}$ for any $\rhobar:G_K \ra \GL_2(\F)$.  These computations provided evidence for the Breuil-M\'ezard conjecture and led Breuil to several important conjectures \cite{BreuilLG}. Perhaps the most striking is the precise conjecture about which lattices inside the smooth $GL_2(\cO_K)$-representation $\sigma(\tau)$ (determined by $\tau$ via inertial local Langlands) can occur globally, in completed cohomology. Breuil's conjectures were proved by Emerton-Gee-Savitt~\cite{EGS} using the explicit presentations of tamely Barsotti-Tate deformation rings.  

In more general situations ($K/\Qp$ ramified or $\rhobar$ non-generic), one cannot hope for such an explicit presentation.  In this paper, we construct for arbitrary $K/\Qp$ and $\GL_n$, resolutions of tamely Barsotti-Tate deformation rings whose geometry is related to that of local models for $\Res_{K/\Qp} \GL_n$ with parahoric level structure. These resolutions are related to the moduli of Breuil-Kisin modules with descent data. The level structure is determined by the tame inertial type $\tau$. For example, if $\tau$ consists of distinct characters, then the local model will have Iwahori level structure, whereas the local models of~\cite{MFFGS, PRcoeff}, which have trivial descent data, always have maximal parahoric level.  

Our perspective in this paper is largely global, in the spirit of~\cite{PRcoeff}. Motivated by the moduli stack of finite flat representations of $G_K$ constructed by \cite{EGpict1}, we study moduli stacks $Y^{\mu, \tau}$ of Kisin modules with tame descent data and $p$-adic Hodge type $\mu \in (\Z^n)^{\Hom(K, \overline{\Q}_p)}$. We can consider a moduli stack of Kisin modules as above, but in addition equipped with an eigenbasis compatible with the descent datum; we call this space $\widetilde{Y}^{\mu,\tau}$. 

\begin{thm}\label{main thm} There exists a moduli stack $Y^{\mu,\tau}$ of Kisin modules with tame descent data and $p$-adic Hodge type $\mu$, which fits into the diagram
\begin{equation*} 
\xymatrix{
& \widetilde{Y}^{\mu, \tau} \ar[dl]_{\pi^{\mu}} \ar[dr]^{\Psi^{\mu}} & \\
Y^{\mu, \tau} & & M(\mu) \\
}
\end{equation*}
where $M(\mu)$ is the Pappas-Zhu local model \cite{PZ, LevinLM} for $(\Res_{K/\Qp} \GL_n, \mu)$ at parahoric level $($determined by $\tau)$ and both $\pi^{\mu}$ and $\Psi^{\mu}$ are smooth maps.
\end{thm}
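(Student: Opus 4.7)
The proof proposal follows the strategy of Kisin~\cite{MFFGS} and Pappas--Rapoport~\cite{PRcoeff}, adapted to incorporate tame descent data. Fix a tame Galois extension $L/K$ splitting the inertial type $\tau$ and let $\Delta = \Gal(L/K)$. The stack $Y^{\mu,\tau}$ classifies finite projective $(\cO_L \otimes_{\Zp} R)[[u]]$-modules $\cM$ equipped with a Frobenius $\phi_\cM$, a semilinear $\Delta$-action realising the inertial type $\tau$, and a condition on $\coker(\phi_\cM)$ cutting out $p$-adic Hodge type $\mu$ in the sense of~\cite{PRcoeff}.

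I would construct $\widetilde{Y}^{\mu,\tau}$ by rigidifying: an $R$-point is a pair $(\cM,\beta)$ where $\beta$ is a $\Delta$-equivariant trivialization of $\cM$ (as a module, forgetting $\phi_\cM$) identifying it with the standard lattice $(\cO_L \otimes R)[[u]]^n$ equipped with its canonical $\Delta$-action. The forgetful map $\pi^\mu$ is then a torsor under the smooth affine group scheme of $\Delta$-equivariant automorphisms of the standard lattice, which immediately gives smoothness of $\pi^\mu$. The map $\Psi^\mu$ sends $(\cM,\beta)$ to the matrix of $\phi_\cM$ in the basis $\beta$, viewed as a point of the mixed-characteristic Beilinson--Drinfeld affine Grassmannian for $\Res_{L/\Qp} \GL_n$.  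The Hodge-type condition forces this point to lie in the Schubert variety indexed by $\mu$, while the $\Delta$-equivariance of $\beta$ together with the $\tau$-action cuts the image down to the parahoric-level local model $M(\mu)$ for $\Res_{K/\Qp} \GL_n$ of Pappas--Zhu~\cite{PZ} and~\cite{LevinLM}; the shape of the parahoric is dictated precisely by the centralizer of $\tau$ in the torus of $\Res_{L/\Qp} \GL_n$, so that distinct characters in $\tau$ produce Iwahori level and coincidences merge facets.

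A delicate preliminary point is to verify that the image of $\Psi^\mu$ really is cut out \emph{scheme-theoretically} by $M(\mu)$, rather than merely set-theoretically: this requires comparing the Kisin-module presentation of the descent condition with the parahoric fixed-point description of $M(\mu)$ inside the ambient affine Grassmannian, and will likely use a flatness argument plus the fact that both sides are reduced by known properties of local models.

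The main obstacle is verifying smoothness of $\Psi^\mu$, for which I would use the infinitesimal lifting criterion. Given a square-zero thickening $R \to R_0$, a rigidified Kisin module $(\cM_0,\beta_0)$ over $R_0$, and a lift of its image in $M(\mu)$, one constructs the lift $(\cM,\beta)$ by lifting the trivialized module canonically and then promoting the Frobenius along the prescribed local-model point; the remaining obstruction concerns extending the $\Delta$-action compatibly, and this vanishes because $|\Delta|$ is prime to $p$ by tameness of $\tau$ (standard averaging over $\Delta$ splits the obstruction class). The relative dimensions of $\pi^\mu$ and $\Psi^\mu$ should agree by a direct computation of tangent spaces, confirming smoothness is of the correct relative dimension. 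Once smoothness of both legs is in hand, the geometric properties of $M(\mu)$ established in~\cite{PZ,LevinLM} (flatness, normality, Cohen--Macaulay) transfer through the diagram to $Y^{\mu,\tau}$, which also gives access to the Kottwitz--Rapoport stratification advertised in the abstract.
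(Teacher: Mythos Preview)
Your overall architecture---rigidify, map to a local model, verify smoothness of both legs---matches the paper, but two concrete points diverge in ways that matter.

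First, the order of construction is inverted. You propose to define $Y^{\mu,\tau}$ directly by a condition on $\coker(\phi_{\cM})$ encoding Hodge type $\mu$, and then map to $M(\mu)$. The paper does the reverse: it first builds $\Psi^N:\widetilde{Y}^{[a,b],\tau}\to\Fl_K^{[a,b],E(u)}$ into the full parahoric affine flag variety with no $\mu$-condition, proves both legs are smooth at that level, and only then \emph{defines} $\widetilde{Y}^{\mu,\tau}$ as the fibre product with $M(\mu)\hookrightarrow\Fl_K^{[a,b],E(u)}$, descending afterwards to $Y^{\mu,\tau}$. This order is forced: $M(\mu)$ is a flat closure and has no moduli description for general $\mu$, so there is no intrinsic cokernel condition that cuts it out. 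Your worry about the image being cut out scheme-theoretically by $M(\mu)$ is thus resolved by fiat rather than by a flatness argument.

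Second, and more substantially, the construction of $\Psi$ itself. You send $(\cM,\beta)$ to the matrix of Frobenius in the Grassmannian for $\Res_{L/\Qp}\GL_n$ and then invoke $\Delta$-equivariance to cut down to the parahoric model for $\Res_{K/\Qp}\GL_n$; you do not justify this identification of fixed points with the parahoric flag variety. The paper bypasses this entirely via the isotypic decomposition: writing $\fM^{(j)}_R=\bigoplus_{\chi}\fM^{(j)}_{R,\chi}$, each piece is a rank-$n$ module over $R[\![u]\!]$ (with $u=v^{p^f-1}$), and $\Psi$ records only the \emph{single} piece $^{\phz}\fM^{(j-1)}_{R,\chi_{s_j(n)}}$ as a lattice in $(R[\![u]\!]^n)[1/E_j(u)]$. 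The parahoric level structure appears not as a fixed-point condition but as a $P_j$-filtration on this lattice modulo $u$, read off from the chain of inclusions among isotypic pieces induced by multiplication by powers of $v$ (diagram~(\ref{bigdiagram})). This lands directly in $\Fl^{E_j(u)}_{P_j}$; note in particular the change of base ring from $R[\![v]\!]$ to $R[\![u]\!]$, which your description does not account for.

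For smoothness of $\Psi$, once an eigenbasis is fixed the $\Delta$-action is already determined (basis vectors transform by the prescribed characters), so there is no obstruction class to average away. The actual content of the lifting argument (Theorem~\ref{formal-smoothness of psi}) is that the Frobenius on the distinguished isotypic piece, together with the filtration, uniquely reconstructs the Frobenius on every other isotypic piece via the commutativity of diagram~(\ref{bigdiagram}); your averaging step does not address this, and it is where the work lies.
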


\begin{rmk}\label{remark on commutative diagram} The key step in the construction of the local model diagram is encoded in diagram~\ref{bigdiagram}. We decompose a Kisin module $(\mathfrak{M},\phi)$ according to the descent datum and then study the interactions between the images of $\phi$ on different isotopic pieces. This is reminiscent of the classical definition of local models which involves lattice chains.   
\end{rmk}

\begin{rmk}\label{remark on the case of vector bundles} The main idea behind constructing the local model diagram in Theorem~\ref{main thm} comes by observing that there is a correspondence between having descent datum from the ramified extension $L$ down to $K$ and having a parahoric level structure defined over $K$. This relationship also appears in the theory of vector bundles over a curve, where a vector bundle with descent datum over a ramified cover of a curve corresponds to a parahoric vector bundle over the curve. 

For example, the paper~\cite{mehtaseshadri} studies the case of vector bundles over smooth projective curves $X$ over $\mathbb{C}$. Assume $X$ has genus $\geq 2$. There exists a simply connected covering of $X$ ramified at a finite set of points (the points and their ramification indices can be prescribed in advance) and this covering can be identified with the upper half space $\mathbb{H}$. We can identify $X=\mathbb{H}/\pi$, where $\pi$ is a group of automorphisms of $\mathbb{H}$ which \emph{does not} act freely on $\mathbb{H}$. Giving a vector bundle of rank $n$ on $\mathbb{H}$ with descent datum to $X$ amounts to giving the trivial rank $n$ bundle on $\mathbb{H}$ together with a homomorphism $\pi\to GL_n(\mathbb{C})$ which induces an action of $\pi$ on the trivial bundle. The invariant direct image under the projection to $X$ gives a vector bundle on $X$ together with a so-called \emph{parabolic structure}. The parabolic structure consists of assigning a flag and a set of weights to the fibre at every ramification point. This construction gives an equivalence between the category of vector bundles on $\mathbb{H}$ with descent datum to $X$ and the category of vector bundles on $X$ with parabolic structure and with rational weights. 

We also note that the more recent paper~\cite{balajiseshadri} extends the results of~\cite{mehtaseshadri} to the case where the structure group is a semisimple simply-connected algebraic group over $\mathbb{C}$ (rather than $GL_n$). 
\end{rmk}

In joint work in preparation with Emerton, Gee and Savitt~\cite{CEGS}, the first author constructs a moduli stack of two-dimensional, tamely potentially Barsotti-Tate $G_K$-representations and relates its geometry to the weight part of Serre's conjecture. In this case, the stack $Y^{\mu,\tau}$ will be a relatively explicit, partial resolution of the moduli stack of $G_K$-representations. The nice geometric properties that $Y^{\mu,\tau}$ inherits from the local model diagram turn out to be key for understanding the geometry of the latter moduli stack.  From this perspective, the present paper and the paper in preparation~\cite{CEGS} clarify the geometry which underlies a possible generalization of Breuil's lattice conjecture in the ramified setting. 


In another direction, the local model diagram above allows us to define the analogue of Kottwitz-Rapoport strata inside the special fiber of $Y^{\mu, \tau}$.  For example, if $K = \Qp$, we get locally closed substacks $\overline{Y}_w^{\mu, \tau}$ of the moduli space of mod $p$ Kisin modules with descent datum $\overline{Y}^{\mu, \tau}$ indexed by certain elements $w$ in the Iwahori-Weyl group of $\GL_n$, the so-called $\mu$-admissible elements defined by Kottwitz and Rapoport (cf. \cite[(9.17)]{PZ}).

\begin{defn}
A Kisin module $\overline{\fM} \in \overline{Y}_w^{\mu, \tau}(\overline{\F}_p)$ is said to have \emph{shape} $($or \emph{genre}$)$ $w$. 
\end{defn}
\noindent This generalizes the notion of \emph{genre} which is crucial in \cite{BreuilLG} and more recently \cite{CDM1} in describing tamely Barsotti-Tate deformation rings for $\GL_2$. 

While Kisin's resolution was most interesting when $K/\Qp$ was ramified, potentially Barsotti-Tate deformation rings have interesting geometry even when $K = \Qp$. In addition, when $n > 2$, there is an advantage to replacing weight by level and considering potentially crystalline deformation rings in questions related to Serre weight conjectures. This direction is considered in joint work in progress of the second author with B. Le Hung, D. Le and S. Morra which computes tamely crystalline deformations rings with Hodge-Tate weights $(2,1,0)$ for $K/\Qp$ unramified with applications to Serre weight conjectures for $\GL_3$ \cite{LLLM1}. The results of \cite{LLLM1} suggest close connections between the strata defined by shapes and Serre weights.      

\subsection{Overview of the paper} In Section 2, we recall the definition of local models in the sense of Pappas-Zhu, as well as the results of~\cite{PZ, LevinLM} on the geometry of local models. In Section 3, we define Kisin modules with decent data, construct the moduli space of Kisin modules with tame descent data (without imposing any conditions related to $p$-adic Hodge type) and derive the key diagram~\ref{bigdiagram}. In Section 4, we construct the local model diagram (again without imposing a $p$-adic Hodge type $\mu$) and prove that both arrows are (formally) smooth. In Section 5, we construct the stack $Y^{\mu,\tau}$, give a moduli-theoretic description of its generic fiber, describe the Kottwitz-Rapoport stratification of its special fiber and relate it to tamely potentially Barsotti-Tate Galois deformation rings.

\subsection{Acknowledgements} The idea of constructing a moduli stack of Breuil-Kisin modules with tame descent data originated in joint work of the first author with M. Emerton, T. Gee and D. Savitt, where this is done for Breuil-Kisin modules corresponding to two-dimensional, tamely Barsotti-Tate Galois representations. The idea that one should be able to relate this moduli stack to local models of Shimura varieties was suggested to us by M. Emerton, whom we thank for many useful conversations. The second author would like to thank B. Bhatt, B. Le Hung, D. Le, S. Morra for many helpful conversations. We also thank the anonymous referees for their comments and suggestions, which improved the paper. A. C. was partially supported by the NSF Postdoctoral Fellowship DMS-1204465 and NSF Grant DMS-1501064.

\subsection{Notation} 

Fix a finite extension $K/\Qp$  with $K_0$ the maximal unramified subextension. Let $f := [K_0:\mathbb{Q}_p]$ and $e_K := [K:K_0]$. Let $k$ denote the residue field of $K$, of cardinality $p^f$. Fix a uniformizer $\pi_K$ of $K$. Let $L/K$ be the totally tame extension of degree $p^f - 1$ obtained by adjoining a $(p^f -1)$st root of $\pi_K$ which we denote by $\pi_L$.  Let $W := W(k)$ be the ring of integers of $K_0$.   
 
Let $E(u) \in \Zp[u]$ be the minimal polynomial for $\pi_K$ over $\Qp$ of degree $e: = f\cdot e_K= [K:\Qp]$.  Note that $P(v) := E(v^{p^f-1}) \in \Zp[v]$ is the minimal polynomial for $\pi_L$ over $\Qp$. 
   
 Set $\Delta := \Gal(L/K)$, which is cyclic of order $p^f-1$.  We take $F$ to be our coefficient field, a finite extension of $\Qp$, with ring of integers $\La$ and residue field $\mathbb{F}$. Let $\Delta^*:=\Hom(\Delta, \La^{\times})$ be the character group.  Assume that $K_0$ embeds into $F$ and fix such an embedding $\sigma_0:K_0 \hookrightarrow F$ which induces an embedding $W \hookrightarrow \La$ and an embedding $k_0 \hookrightarrow \mathbb{F}$.   We will abuse notation and denote these all by $\sigma_0$.
 
Let $\tau:\Delta \ra \GL_n(\La)$ be a tame principal series type, i.e., $\tau\cong \oplus_{i=1}^n \chi_i$ with $\chi_i \in \Delta^*$.   We will take $ \omega_f:G_K \ra W^{\times}$ to be the fundamental character of niveau $f$ given by $\omega_f(\sigma) = \frac{\sigma(\pi_L)}{\pi_L}$.         

\section{Local models}

In this section, we recall the definition and properties of local models for the group $\Res_{K/\Qp} \GL_n$, at parahoric level and for general cocharacters.  These local models are studied in more detail and for more general groups in \cite{LevinLM}.  We will review the relevant definitions and the results we will need.  One can think of this construction as a mixed characteristic version of the deformation of the affine flag variety used by Gaitsgory in \cite{Gaitsgory}.   The strategy in mixed characteristic builds on the work of Pappas and Zhu \cite{PZ}.  For $\GL_n$, the construction originates in work of Haines and Ngo \cite{HN}.

Since $K_0$ embeds into $F$, the local models for $\Res_{K/\Qp} \GL_n$ decompose as products over the different embeddings of $K_0$ into $\overline{\Q}_p$. For now, it is convenient to fix an embedding $\sigma:K_0  \iarrow F$ and let $Q(u) := \sigma(E(u))$, an Eisenstein polynomial over $\La$. Later on, we will allow $\sigma=\sigma_0\circ \varphi^{-j}:K_0\iarrow F$, where $\sigma_0$ is the embedding we have fixed above, $\varphi$ is the lift of Frobenius on $K_0$ and $j\in \Z/f\Z$.

Fix a parabolic subgroup $P$ of $\GL_n$ over $\Spec \La$.  $P$ is the stabilizer of a filtration 
$$
0 = V_0 \subseteq V_1 \subseteq \ldots \subseteq V_{n-1} \subseteq V_n = \La^n
$$
on the free rank $n$ $\La$-module.  For any $\La$-algebra $R$ and any rank $n$ projective $R$-module $M$, a \emph{$P$-filtration} is a filtration $\{ \cF^i(M) \} \}$ which is (Zariski) locally isomorphic to $\{ V_i \otimes_{\La} R \}$.    

\begin{defn} \label{BDgrass} For any $\La$-algebra $R$, define
$$
\Gr^{Q(u)}(R) := \{\text{isomorphism classes of pairs } (L, \beta) \},
$$ 
where $L$ is a finitely generated projective $R[u]$-module of rank $n$, $\beta:L[1/Q(u)] \cong (R[u]^n)[1/Q(u)]$.  

For any $\La$-algebra $R$, define
$$
\Fl^{Q(u)}_P(R) := \{\text{isomorphism classes of triples } (L, \beta, \eps) \},
$$ 
where $(L, \beta) \in \Gr^{Q(u)}(R)$ and $\eps$ is a $P$-filtration on $L/uL$. There is a natural forgetful morphism $\mathrm{pr}:\Fl^{Q(u)}_P \ra \Gr^{Q(u)}$.

\end{defn}

We will also need some variations of these objects. There is a local version of $\Gr^{Q(u)}$:
\begin{defn} \label{BDloc} Let $\widehat{R[u]}_{(Q(u))}$ denote the $Q(u)$-adic completion of $R[u]$.  For any $\La$-algebra $R$, define
$$
\Gr^{Q(u)}_{\mathrm{loc}}(R) := \{\text{isomorphism classes of pairs } (\widehat{L}, \widehat{\beta}) \},
$$ 
where $\widehat{L}$ is a finitely generated projective $\widehat{R[u]}_{(Q(u))}$-module of rank $n$ and $\widehat{\beta}$ is a trivialization of $\widehat{L}[1/Q(u)]$.  


\end{defn}

\begin{thm}\label{comparison with local versions}  The natural map 
$$
\Gr^{Q(u)} \ra \Gr^{Q(u)}_{\mathrm{loc}} 
$$
given by $Q(u)$-adic completion is an isomorphisms of functors.  
\end{thm}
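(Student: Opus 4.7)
The plan is to identify this as an instance of Beauville--Laszlo descent along the effective Cartier divisor cut out by $Q(u)$ inside $\Spec R[u]$. Since $Q(u)$ is monic in $u$ (it is Eisenstein over $\La$), it is a non-zero-divisor in $R[u]$ for every $\La$-algebra $R$, and $R[u]/Q(u)$ is finite free of rank $e$ over $R$. In particular the quotient $R[u]/Q(u)^n$ is flat over $R$ for every $n$, so $(R[u],Q(u))$ satisfies the hypotheses of the Beauville--Laszlo gluing theorem in families.

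Beauville--Laszlo then says the category of finitely generated projective $R[u]$-modules is equivalent, via $L\mapsto (L[1/Q(u)],\widehat L,\mathrm{can})$, to the category of triples consisting of a finitely generated projective $R[u,1/Q(u)]$-module, a finitely generated projective $\widehat{R[u]}_{(Q(u))}$-module, and a gluing isomorphism over $\widehat{R[u]}_{(Q(u))}[1/Q(u)]$. The natural map of the theorem sits inside this equivalence: the trivializations $\beta$ and $\widehat\beta$ both amount to specifying an isomorphism of the corresponding $1/Q(u)$-localization with the standard free module, and hence by composition determine the gluing datum. Thus the two sets of data (over $R$) are tautologically in bijection, compatibly with base change in $R$.

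For essential surjectivity, given $(\widehat L,\widehat\beta)$ one glues $\widehat L$ to the free module $(R[u,1/Q(u)])^n$ along $\widehat{R[u]}_{(Q(u))}[1/Q(u)]$ via $\widehat\beta$; Beauville--Laszlo produces a finitely generated projective $R[u]$-module $L$, which is of rank $n$ because its two covers are, and the gluing datum recovers a trivialization $\beta\colon L[1/Q(u)]\cong (R[u,1/Q(u)])^n$. For full faithfulness, any isomorphism of completed data compatible with $\widehat\beta,\widehat\beta'$ is automatically compatible with the gluings (both localizations being identified with the standard free module via the $\beta$'s), and descends to a unique isomorphism of the pairs $(L,\beta)$ and $(L',\beta')$.

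The only real subtlety is ensuring the Beauville--Laszlo equivalence holds uniformly in the test ring $R$, i.e.\ that it glues to an isomorphism of functors rather than just a bijection on $R$-points for each fixed $R$. This reduces to checking compatibility with arbitrary ring maps $R\to R'$, which in turn reduces to the flatness of $R[u]/Q(u)^n$ over $R$ noted above and the fact that completion along the monic $Q(u)$ commutes with flat base change in $R$. This is the step to be careful with, but it is routine given the monicness of $Q(u)$.
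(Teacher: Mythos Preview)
Your proposal is correct and takes essentially the same approach as the paper: the paper's proof consists of the single sentence ``The equivalence is given by the Beauville--Laszlo descent (\cite{BLdescent}, see Proposition 4.1.3 of~\cite{LevinLM} for more details).'' You have simply unpacked what this citation means, correctly identifying that monicity of $Q(u)$ guarantees it is a non-zero-divisor in $R[u]$ for every $\La$-algebra $R$ and that the gluing is functorial in $R$.
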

\begin{proof}  The equivalence follows from the Beauville-Laszlo descent lemma (main theorem of \cite{BLdescent}) since $Q(u)$ is a regular element of $R[u]$ for an $\La$-algeba $R$. A more general version of the descent lemma appears as Lemma 6.1 of \cite{PZ} along with more details.  
\end{proof} 

\begin{rmk} There is a local version of $\Fl^{Q(u)}_P$ as well but it requires more machinery to define. One has to work with ``parahoric'' group schemes over $\cO[u]$ as in \S 4 of \cite{PZ} or \S 3 of \cite{LevinLM}. For example, \S 6.2.1 of \cite{PZ} defines a local affine Grassmanian associated to any smooth affine group scheme over $\cO[u]$ which includes as a special case a local version of $\Fl^{Q(u)}_P$.  
\end{rmk} 

We have in fact another description of $\Gr^{Q(u)}$ and $\Fl^{Q(u)}_P$ when $p$ is nilpotent in $R$:   
\begin{defn} \label{alt} For any $\La/p^r \La$-algebra $R$, define
$$
\Gr^{Q(u)}_{\mathrm{alt}}(R) := \{\text{isomorphism classes of pairs } (L', \beta') \},
$$ 
where $L'$ is a finitely generated projective $R[\![u]\!]$-module of rank $n$ and $\beta'$ is a trivialization of $L'[1/Q(u)]$.  We define  $\Fl^{Q(u)}_{P,\mathrm{alt}}$ to include the additional data of a $P$-filtration $\eps'$ on $L'/uL'$. 
\end{defn}

\begin{prop}  Let $R$ be a $\La/p^r \La$ algebra, there are natural bijections 
$$
 \Gr^{Q(u)}(R) \xrightarrow{\sim} \Gr^{Q(u)}_{\mathrm{alt}}(R) \text{ and }  \Fl^{Q(u)}_P(R)  \xrightarrow{\sim} \Fl^{Q(u)}_{P, \mathrm{alt}}(R). 
$$
\end{prop}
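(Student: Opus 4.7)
The plan is to deduce both bijections from the Beauville--Laszlo identification $\Gr^{Q(u)}\isom\Gr^{Q(u)}_{\mathrm{loc}}$ stated in the previous theorem, by establishing, for every $\La/p^r\La$-algebra $R$, the ring isomorphism
\[
\widehat{R[u]}_{(Q(u))}\isom R[\![u]\!].
\]
Granted this, a pair $(\widehat L,\widehat\beta)\in\Gr^{Q(u)}_{\mathrm{loc}}(R)$ is precisely a finitely generated projective $R[\![u]\!]$-module together with a trivialization after inverting $Q(u)$, hence a point of $\Gr^{Q(u)}_{\mathrm{alt}}(R)$, and the first bijection follows formally.

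The key step will be the ring comparison, and I would prove it by exploiting the Eisenstein form of $Q(u)$. Since $Q(u)\equiv u^e \pmod p$ in $\La[u]$, write $Q(u)=u^e - p\cdot h(u)$ with $h\in\La[u]$ of degree $<e$. A simple induction on $k$, starting from $u^{en}\equiv Q(u)^n \pmod p$ in $R[u]$ and iteratively multiplying by $u^{en}$, yields
\[
u^{ken}\in Q(u)^n R[u]+p^k R[u].
\]
Setting $k=r$ and invoking $p^r=0$ gives $u^{ren}\in Q(u)^n R[u]$, so that $u$ is nilpotent in $R[u]/Q(u)^n$. Combined with the tautological identity $R[u]/u^N\isom R[\![u]\!]/u^N$, this yields $R[u]/Q(u)^n\isom R[\![u]\!]/Q(u)^n$ for every $n$. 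A parallel estimate inside $R[\![u]\!]$, using $Q(u)\in (u,p)$ together with $p^r=0$, shows that the $(Q(u))$- and $(u)$-adic topologies on $R[\![u]\!]$ agree, so $R[\![u]\!]$ is already $(Q(u))$-adically complete. Passing to the inverse limit over $n$ then identifies $\widehat{R[u]}_{(Q(u))}$ with $R[\![u]\!]$, as desired.

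For the flag variant I will simply match the $P$-filtrations under the above identification. If $(L,\beta)$ corresponds to $(\widehat L,\widehat\beta)$ via $\widehat L=L\otimes_{R[u]}R[\![u]\!]$, then base change along $R[\![u]\!]\twoheadrightarrow R[\![u]\!]/u=R$ gives a canonical isomorphism $\widehat L/u\widehat L\isom L/uL$, so the datum of a $P$-filtration on one side is exactly the datum of one on the other. This produces the bijection for $\Fl^{Q(u)}_P$ and $\Fl^{Q(u)}_{P,\mathrm{alt}}$. The only non-formal step is the ring-theoretic comparison in the second paragraph; this is precisely where the hypothesis $p^r=0$ in $R$ enters, and beyond that, no input deeper than the Beauville--Laszlo descent already invoked is required.
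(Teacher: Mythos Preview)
Your proof is correct and follows essentially the same approach as the paper: pass through the local description $\Gr^{Q(u)}_{\mathrm{loc}}$ via Beauville--Laszlo, then observe that the $Q(u)$-adic and $u$-adic completions of $R[u]$ coincide when $p$ is nilpotent because $Q(u)\equiv u^e\pmod p$. The paper states this in one line, while you have supplied the explicit estimates $u^{ren}\in Q(u)^nR[u]$ and $Q(u)^{rN}\in u^NR[\![u]\!]$ and spelled out the filtration matching for the flag version, but the argument is the same.
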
 
\begin{proof} When $p$ is nilpotent, the $u$-adic and $Q(u)$-adic completions of $R[u]$ are the same since $Q(u) = u^e + pQ'(u)$ and so $\Gr^{Q(u)}_{\mathrm{alt}}(R) = \Gr^{Q(u)}_{\mathrm{loc}}(R)$.  Thus, the first bijection follows from Theorem \ref{comparison with local versions}.  If $(L, \beta, \eps) \in \Fl^{Q(u)}_P(R)$ maps to $(L', \beta', \eps') \in \Fl^{Q(u)}_{P, \mathrm{alt}}(R)$, then $L/uL$ is canonically isomorphic to $L'/uL'$ and so the data of $\eps$ is equivalent to the date of $\eps'$.  
\end{proof} 

\begin{thm} The functors $\Gr^{Q(u)}$ and $\Fl^{Q(u)}_P$ are represented by ind-schemes which are ind-projective over $\Spec \La$.  
\end{thm}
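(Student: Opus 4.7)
The plan is to proceed in three stages: first represent ``bounded'' pieces of $\Gr^{Q(u)}$ inside classical Grassmannians, then exhibit $\Gr^{Q(u)}$ as the colimit over these bounded pieces, and finally deduce the statement for $\Fl^{Q(u)}_P$ by observing that the forgetful morphism $\mathrm{pr}$ is a Zariski-locally trivial $\GL_n/P$-bundle over $\Gr^{Q(u)}$.

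For the first stage, for each integer $N \geq 0$ I would define the subfunctor $\Gr^{Q(u), \leq N} \subseteq \Gr^{Q(u)}$ consisting of those pairs $(L, \beta)$ such that
$$
Q(u)^N R[u]^n \;\subseteq\; \beta(L) \;\subseteq\; Q(u)^{-N} R[u]^n.
$$
After multiplying $\beta$ by $Q(u)^N$ one may replace $\beta(L)$ by an $R[u]$-submodule $L' \subseteq R[u]^n$ containing $Q(u)^{2N} R[u]^n$ and locally free of rank $n$ over $R[u]$. Reducing modulo $Q(u)^{2N}$ identifies the datum of $L'$ with an $R$-submodule of the finite free $R$-module $M_N := (R[u]/Q(u)^{2N})^n$ of rank $2Nen$. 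This embeds $\Gr^{Q(u), \leq N}$ into the disjoint union (over possible ranks) of classical Grassmannians $\mathrm{Gr}(M_N)$ of locally direct summands of $M_N$. The two extra conditions --- stability under multiplication by $u$, and local freeness of the resulting $R[u]$-module of rank $n$ --- are each closed conditions. Hence $\Gr^{Q(u), \leq N}$ is representable by a projective scheme over $\Spec \La$.

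For the second stage, one checks directly from the definitions that the inclusions $\Gr^{Q(u), \leq N} \hookrightarrow \Gr^{Q(u), \leq N+1}$ are closed immersions, and that every $R$-point of $\Gr^{Q(u)}$ (with $R$ noetherian, reducing to the general case by a standard limit argument) lies in some $\Gr^{Q(u), \leq N}$; this boundedness uses that $\beta$ is a trivialization after inverting $Q(u)$. Together this exhibits $\Gr^{Q(u)} = \varinjlim_N \Gr^{Q(u), \leq N}$ as an ind-projective ind-scheme over $\Spec \La$. For $\Fl^{Q(u)}_P$, the map $\mathrm{pr}$ pulls back a point $(L,\beta)$ to the scheme of $P$-filtrations on the locally free rank-$n$ $R$-module $L/uL$. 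Since this is Zariski-locally modeled by the classical partial flag variety $\GL_n/P$, the morphism $\mathrm{pr}$ is representable, smooth and projective, so $\Fl^{Q(u), \leq N}_P := \mathrm{pr}^{-1}(\Gr^{Q(u), \leq N})$ is projective over $\Spec\La$, and $\Fl^{Q(u)}_P$ is ind-projective.

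The main obstacle will be verifying the two closedness claims inside $\mathrm{Gr}(M_N)$, namely $u$-stability and the requirement that the $R[u]$-module $L'$ itself be locally free of rank $n$ (not just that its image in $M_N$ be a locally direct summand). The first is a straightforward vanishing condition for a morphism between finite locally free modules. The second is subtler in mixed characteristic because $Q(u) = u^e + pQ'(u)$ mixes the $u$-adic and $p$-adic filtrations; the cleanest way is to combine the Beauville--Laszlo isomorphism $\Gr^{Q(u)} \cong \Gr^{Q(u)}_{\mathrm{loc}}$ proved above with a fibre-by-fibre flatness criterion, so that local freeness of $L'$ over $R[u]$ reduces to a closed condition on the Fitting ideals of $R[u]^n/L'$.
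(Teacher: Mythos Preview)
Your outline is the standard direct argument for ind-projectivity of Beilinson--Drinfeld Grassmannians, and it is essentially correct. The paper itself does not give a proof: it simply cites Proposition~4.1.4 of \cite{LevinLM} (and, for the projectivity of the bounded pieces, Proposition~10.1.15 of \cite{LevinThesis}). So your approach is more self-contained, and is in fact the argument one finds when unwinding those references.

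One point deserves sharpening. In your final paragraph you treat ``$L'$ is locally free of rank $n$ over $R[u]$'' as a second closed condition to be cut out inside the Grassmannian, and you propose attacking it with Beauville--Laszlo and Fitting ideals. This is not quite the right framing. The correct statement is a \emph{lemma}: for an $R[u]$-submodule $L'\subseteq R[u]^n$ with $Q(u)^{2N}R[u]^n\subseteq L'$, the module $L'$ is finite projective of rank $n$ over $R[u]$ if and only if the quotient $R[u]^n/L'$ is finite projective over $R$. The latter condition is exactly what membership in the classical Grassmannian of $M_N$ already encodes. So once you have this lemma, there is no further closed condition to impose beyond $u$-stability: the map $\Gr^{Q(u),\leq N}\to \mathrm{Gr}(M_N)$ is a bijection onto the $u$-stable locus. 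The mixed-characteristic shape of $Q(u)$ plays no special role here, and Fitting ideals are not needed (indeed, projectivity is generally an open condition, not a closed one, so a Fitting-ideal argument would go the wrong way). The lemma itself is elementary: one direction is immediate after Zariski-localizing so that $L'$ is free; for the converse, one checks that $L'$ is finitely presented over $R[u]$ and then verifies flatness fibre-by-fibre over $\Spec R$, where $R[u]$ becomes a PID.
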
 
\begin{proof} This follows from Proposition 4.1.4 of~\cite{LevinLM}.
\end{proof}

Let $L_{0, R} := R[u]^n \subset (R[u]^n)[1/Q(u)]$. In this situation, we can make the ind-structure very concrete.  
\begin{defn} \label{BDlattice} For any integers $a, b$ with $b \geq a$, define
$$
\Gr^{Q(u), [a,b]}(R) = \{ (L, \beta) \in \Gr^{Q(u)}(R) \mid Q(u)^{-a} L_{0, R} \supset \beta(L) \supset Q(u)^{-b} L_{0,R}.
$$ 
Similarly, we define $\Fl^{Q(u), [a,b]}_P = \Fl^{Q(u)}_P \times_{\Gr^{Q(u)}} \Gr^{Q(u), [a,b]}$.  
\end{defn} 

\begin{prop} The functors $\Gr^{Q(u), [a,b]}$ and $\Fl^{Q(u), [a,b]}_P$ are represented by projective $\La$-schemes.
\end{prop}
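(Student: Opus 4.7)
The plan is to realize $\Gr^{Q(u),[a,b]}$ as a closed subscheme of a classical finite-type Grassmannian over $\Spec\La$ and then to bootstrap from there to the flag version via the forgetful map.

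First I would observe that the bounds $Q(u)^{-a}L_{0,R}\supset \beta(L)\supset Q(u)^{b}L_{0,R}$ force $\beta(L)/Q(u)^{b}L_{0,R}$ to be an $R[u]$-submodule of the finite free $R$-module
\[
N_R := Q(u)^{-a}L_{0,R}/Q(u)^{b}L_{0,R},
\]
which has rank $n(a+b)e$ over $R$ (since $Q(u)$ has degree $e$). Using that $L$ is projective of rank $n$ over $R[u]$ and that $Q(u)$ is a non-zero-divisor which becomes a unit after inversion, one checks that $\beta(L)/Q(u)^{b}L_{0,R}$ is a locally free $R$-direct-summand of $N_R$ of $R$-rank $nbe$. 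This gives a functorial map $\Gr^{Q(u),[a,b]}\to\mathrm{Gr}(nbe,\,N_{\La})$ into the ordinary Grassmannian of rank-$nbe$ subbundles of $N_{\La}$, which is a projective $\La$-scheme.

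Next I would check that this map is a closed immersion. On the universal Grassmannian the locus where a subbundle is stable under multiplication by $u$ is cut out by the vanishing of the composite of multiplication by $u$ with the projection onto the universal quotient, which is a closed condition. The reverse construction — recovering $\beta(L)$ from an $R[u]$-stable subbundle of $N_R$ of the right rank — is clearly inverse on $R$-points, and one verifies using the Beauville–Laszlo description that the result is automatically $R[u]$-projective of rank $n$ (any $R[u]$-module sitting between two rank-$n$ projective $R[u]$-lattices with locally free $R$-quotient is itself rank-$n$ projective). Therefore $\Gr^{Q(u),[a,b]}$ is a closed subscheme of $\mathrm{Gr}(nbe,N_{\La})$, hence a projective $\La$-scheme. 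Alternatively, since by the previous theorem $\Gr^{Q(u)}$ is ind-projective and one readily sees that the conditions defining $\Gr^{Q(u),[a,b]}$ are closed on each finite-type piece of the ind-structure, the assertion follows from the general fact that a closed subfunctor of an ind-projective ind-scheme that is of finite type is projective.

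For $\Fl^{Q(u),[a,b]}_P$, I would use the definition as a fiber product $\Fl^{Q(u)}_P\times_{\Gr^{Q(u)}}\Gr^{Q(u),[a,b]}$. The forgetful morphism $\mathrm{pr}:\Fl^{Q(u)}_P\to \Gr^{Q(u)}$ is a Zariski-locally trivial fibration with fiber $\GL_n/P$, the partial flag variety attached to $P$, since a $P$-filtration on the rank-$n$ projective $R$-module $L/uL$ is parametrized by the sections of a $\GL_n/P$-bundle over $\Spec R$. Pulling this bundle back along the closed immersion $\Gr^{Q(u),[a,b]}\hookrightarrow \Gr^{Q(u)}$ gives $\Fl^{Q(u),[a,b]}_P$ as a $\GL_n/P$-bundle over the projective $\La$-scheme $\Gr^{Q(u),[a,b]}$; since $\GL_n/P$ is itself projective, the total space is projective over $\La$.

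The main obstacle I expect is the finite-type step: carefully verifying that the subfunctor defined by the lattice bounds is genuinely represented by a closed subscheme of a classical Grassmannian, in particular checking the rank calculation and the local freeness of the $R$-quotient, which requires using that $Q(u)=u^e+pQ'(u)$ is a regular element so that the relevant quotients behave well in families. Once this is in hand, projectivity follows formally from the ambient ind-projectivity and the flag case reduces to a straightforward fiber product argument.
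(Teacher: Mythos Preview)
The paper does not give a proof at all; it simply cites \cite[Proposition 10.1.15]{LevinThesis}. Your sketch is the standard argument for representability of bounded pieces of affine Grassmannians by projective schemes (embedding into a classical Grassmannian via $L\mapsto \beta(L)/Q(u)^bL_{0,R}$, with $u$-stability cutting out a closed locus), and is almost certainly what the cited reference does. Your treatment of $\Fl^{Q(u),[a,b]}_P$ as a $\GL_n/P$-bundle over $\Gr^{Q(u),[a,b]}$ is correct and is the cleanest way to handle that case.

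One small correction: the $R$-rank of $\beta(L)/Q(u)^bL_{0,R}$ is \emph{not} constantly $nbe$. For instance $\beta(L)=Q(u)^{-a}L_{0,R}$ gives rank $n(a+b)e$ and $\beta(L)=Q(u)^bL_{0,R}$ gives rank $0$; in general the rank is determined by the $Q(u)$-adic valuation of $\det\beta$, which is only locally constant on $\Spec R$. So the target should be the finite disjoint union $\coprod_{r=0}^{n(a+b)e}\mathrm{Gr}(r,N_\La)$ rather than a single Grassmannian. This does not affect the conclusion, since a finite disjoint union of projective $\La$-schemes is still projective. You already flagged the genuine technical point (that the quotient $Q(u)^{-a}L_{0,R}/\beta(L)$ is $R$-locally free), and that is indeed where the work lies.
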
 
\begin{proof} See \cite[Proposition 10.1.15]{LevinThesis}.
\end{proof}

\noindent In order to describe the geometry of $\Fl_{P}^{Q(u)}(R)$, we recall the definition of the affine Grassmannian and affine flag varieties.

\begin{defn}\label{affine grassmannian} Let $\kappa$ be a field. Let $\Gr_{\GL_n}$ be the affine Grassmannian of $\GL_n$ over $\kappa$.  $\Gr_{\GL_n}$ is the ind-scheme parametrizing, 
for any $\kappa$-algebra $R$, finite projective $R[\![t]\!]$-submodules $L_R$ of $R(\!(t)\!)^n$ (we will refer to such an $L_R$ as an $R[\![t]\!]^n$-\emph{lattice} in $R(\!(t)\!)^n$).  
\end{defn}

One can also define the affine Grassmannian $\Gr_G$ for a general connected reductive group $G$ over $\kappa$. This is the fpqc quotient of group functors $G(\!(t)\!)/G[\![t]\!]$, where the loop group $G(\!(t)\!)$ sends a $\kappa$-algebra $R$ to $G(R(\!(t)\!))$. The positive loop group $G[\![t]\!]$ sends a $\kappa$-algebra $R$ to $G(R[\![t]\!])$. In the case of $GL_n$, this definition is equivalent to Definition~\ref{affine grassmannian}. The fpqc quotient $\Gr_G$ is representable by an ind-projective ind-scheme over $\kappa$. (For a general group $G$, the affine Grassmannian parametrizes $G$-bundles on $\Spec R[\![t]\!]$ together with a trivialization on $\Spec R(\!(t)\!)$, where we can think of $G$-bundles in the Tannakian sense as tensor functors from $\mathrm{Rep}_{\kappa}(G)$ to vector bundles. See, for example, Proposition 5.2 of~\cite{PZ}.) In particular, one can consider $\Gr_{\Res_{(K \otimes_{\Qp} F)/F} \GL_n}$. Over $\overline{F}$, we have a product decomposition 
$$
(\Res_{(K \otimes_{\Qp} F)/F} \GL_n)_{\overline{F}} \cong \prod_{K \iarrow \overline{F}} \GL_n.
$$
The same then holds for the affine Grassmannian, namely, 
$$
(\Gr_{\Res_{(K \otimes_{\Qp} F)/F} \GL_n})_{\overline{F}} \cong \prod_{K \iarrow \overline{F}}(\Gr_{\GL_n})_{\overline{F}}
$$
and so $\Gr_{\Res_{(K \otimes_{\Qp} F)/F} \GL_n}$ is a twisted form of $\prod_{K \iarrow \overline{F}}\Gr_{\GL_n}$.

$\Gr_{GL_n}$ has a stratification by affine Schubert cells, as follows. Fix the diagonal torus $T$ and the upper triangular Borel $B$. This induces an Bruhat ordering on the set of dominant cocharacters $\{ (d_1, d_2, \ldots, d_n) \mid d_i \geq d_{i+1} \}$ of $\GL_n$.  Let $\mu = (d_1, d_2, \ldots, d_n)$ be a dominant cocharacter. The positive loop group $\GL_n(\kappa[\![t]\!])$ acts on the affine Grassmannian $\Gr_{\GL_n}$. By the Cartan decomposition for $\GL_n(\kappa(\!(t)\!))$, the orbits of this $\GL_n(\kappa[\![t]\!])$-action are indexed by conjugacy classes of cocharacters of $\GL_n$; the orbits are called the affine Schubert cells attached to the (conjugacy classes of) cocharacters. The affine Schubert variety $S(\mu)$ is defined to be the closure of the open Schubert cell $S^\circ(\mu)$ corresponding to the conjugacy class of $\mu$. It is a finite type closed subscheme of $\Gr_{\GL_n}$.  Concretely, $S(\mu)$ parametrizes lattices whose position relative to the standard lattice are less than or equal to $\mu$ for the Bruhat-order.  In particular, $S(\mu)_{\overline{\kappa}}$ is the union of the locally closed affine Schubert cells for all $\mu' \leq \mu$ (\cite[Proposition 2.8]{RicharzSV}).   

For our chosen parabolic subgroup $P\subset \GL_n$, we recall the definition of the affine flag variety over $\F$; it will be an ind-projective scheme over $\mathbb{F}$. It will depend on our chosen embedding $\sigma: K_0\hookrightarrow F$; recall that we have defined $Q(u):=\sigma(E(u))$. 

\begin{defn}\label{affine flag variety} The affine flag variety $\mathrm{Fl}_{P_{\F}}$ associated to the pair $(\GL_n, P_{\F})$ is the moduli space of pairs $(L, \cF^{\bullet}(L/t L))$ where $L$ is a lattice in $R(\!(t)\!)^n$ and $\{ \cF^{\bullet}(L/t L) \}$ is a $P$-filtration on $L/t L$ for any $\F$-algebra $R$.   
\end{defn}

We have a forgetful map $\mathrm{Fl}_{P_{\F}} \ra \Gr_{\GL_n}$ whose fibers are isomorphic to the flag variety $\GL_n/P_{\F}$.    

\begin{prop}  The functor $\Fl^{Q(u)}_P$ is represented by an ind-projective scheme over $\Spec \La$.  Furthermore,
\begin{enumerate}
\item The generic fiber $\Fl^{Q(u)}_P[1/p]$ is isomorphic to the product $\GL_n/P_{F} \times \Gr_{\Res_{(K \otimes_{K_0,\sigma} F)/F} \GL_n}$ over $\Spec F$;
\item The special fiber $\Fl^{Q(u)}_P \otimes_{\La} \F$ is isomorphic to $\mathrm{Fl}_{P_{\F}}$.
\end{enumerate}
\end{prop}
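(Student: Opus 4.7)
The plan is to verify the three claims in turn: ind-projectivity, then the special fiber, then the generic fiber.

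For ind-projectivity, I would invoke the earlier Proposition, which represents $\Fl^{Q(u),[a,b]}_P$ by a projective $\La$-scheme. Since $\Fl^{Q(u)}_P = \bigcup_{a \le b} \Fl^{Q(u),[a,b]}_P$, this is immediate. As a sanity check, one can observe that the forgetful map $\mathrm{pr} : \Fl^{Q(u)}_P \to \Gr^{Q(u)}$ is a Zariski-locally trivial $\GL_n/P$-bundle (since a $P$-filtration on a Zariski-locally free rank $n$ module is parametrized by $\GL_n/P$), consistent with $\Gr^{Q(u)}$ being ind-projective.

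For the special fiber, the key observation is that $Q(u) = \sigma(E(u)) \equiv u^{e} \pmod p$, so over $\F$ the $Q(u)$-adic and $u$-adic topologies on $R[u]$ agree. By the alt/local equivalences proved just above, $\Gr^{Q(u)} \otimes_\La \F$ parametrizes rank $n$ projective $R[\![u]\!]$-modules equipped with a trivialization after inverting $u$, which is precisely the affine Grassmannian $\Gr_{\GL_n}$ over $\F$. Adding a $P$-filtration on $L/uL$ then recovers exactly Definition~\ref{affine flag variety}, identifying $\Fl^{Q(u)}_P \otimes_\La \F$ with $\mathrm{Fl}_{P_\F}$.

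For the generic fiber, the key observation runs in the opposite direction: $Q(0) = \sigma(E(0))$ is a uniformizer of $\La$, hence a unit in any $F$-algebra $R$. Consequently, tensoring the trivialization $\beta : L[1/Q(u)] \cong (R[u]^n)[1/Q(u)]$ along $R[u][1/Q(u)] \otimes_{R[u]} R = R[1/Q(0)] = R$ produces a \emph{canonical} isomorphism $\beta_0 : L/uL \cong R^n$. Under $\beta_0$, specifying a $P$-filtration on $L/uL$ becomes equivalent to specifying a $P$-filtration on the trivial module $R^n$, i.e.\ an $R$-point of $\GL_n/P_F$. This gives a canonical product decomposition
\[
\Fl^{Q(u)}_P[1/p] \cong \bigl(\Gr^{Q(u)} \otimes_\La F\bigr) \times_F (\GL_n/P_F).
\]
To finish, I would invoke the identification $\Gr^{Q(u)} \otimes_\La F \cong \Gr_{\Res_{(K \otimes_{K_0,\sigma} F)/F} \GL_n}$, which follows from the Beauville--Laszlo/local description: setting $A := F[u]/Q(u) = K \otimes_{K_0,\sigma} F$ and $t := Q(u)$, the étaleness of $A/F$ lets one lift $A \hookrightarrow \widehat{F[u]}_{(Q(u))}$, yielding an iso $\widehat{F[u]}_{(Q(u))} \cong A[\![t]\!]$ (both are $F[\![t]\!]$-free of rank $e$, and the map is an iso mod $t$ so an iso mod $t^k$ by devissage). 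Passing to the loop group presentation then gives the desired affine Grassmannian of the Weil restriction.

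The main obstacle is this last identification, i.e.\ producing the splitting $\widehat{F[u]}_{(Q(u))} \cong A[\![t]\!]$ and checking functoriality in the test algebra $R$; this is essentially the content of Proposition 4.1.4 of \cite{LevinLM}, which I would simply quote rather than reprove.
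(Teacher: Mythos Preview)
Your argument is correct. The paper's own proof is simply a citation to \cite[Proposition 2.2.8]{LevinLM}, so there is no argument in the paper to compare against; you have in effect unpacked what that cited result does. The three ingredients you isolate---the filtered union giving ind-projectivity, the congruence $Q(u)\equiv u^{e}\pmod p$ collapsing the special fiber to $\mathrm{Fl}_{P_{\F}}$, and the invertibility of $Q(0)$ over $F$ splitting off the $\GL_n/P_F$ factor on the generic fiber---are exactly the standard ones.

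One small terminological correction: $Q(0)=\sigma(E(0))=E(0)$ lies in $p\,\Zp^{\times}$, so it need not be a uniformizer of $\La$ when $F/\Qp$ is ramified (indeed the paper's phrase ``Eisenstein over $\La$'' is already slightly loose). But all you actually use is that $Q(0)\in F^{\times}$, which is clear, so your reduction-mod-$u$ step goes through unchanged. Your final identification $\widehat{F[u]}_{(Q(u))}\cong A[\![t]\!]$ via the formal \'etaleness of $A=F[u]/(Q(u))$ over $F$ is correct, and the functoriality in $R$ you flag as an obstacle is harmless: since $F[u]/Q(u)^r\cong A[t]/t^r$ as $F$-algebras, tensoring with $R$ over $F$ and passing to the inverse limit gives $\widehat{R[u]}_{(Q(u))}\cong (R\otimes_F A)[\![t]\!]$ for every $F$-algebra $R$.
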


\begin{proof}  See \cite[Proposition 2.2.8]{LevinLM}. 
\end{proof}

We now want to consider a version of $\Fl_P^{Q(u)}$ where the embedding of $K_0$ into the coefficient field $F$ is allowed to vary. Recall that we fixed such an embedding $\sigma_0:K_0\hookrightarrow F$. For each $0 \leq j \leq f-1$, view $K_0$ as a subfield of $F$ via $\sigma_j = \sigma_0 \circ \phz^{-j}:K_0\hookrightarrow F$. (Recall that $\varphi$ is the lift of Frobenius to the unramified extension $K_0/\Qp$ and $j\in \Z/f\Z$). Fix a geometric cocharacter $\mu$ of $\Res_{K/\Qp} \GL_n$  which we write as $(\mu_j)$ where $\mu_j$ is geometric cocharacter of $\Res_{K/K_0} \GL_n$ for each embedding $\sigma_j$. Furthermore, for each embedding $\sigma_j$, fix a parabolic subgroup $P_j$ of $\GL_n$.  Define the following schemes over $\Spec \La$:
$$
\Fl_K^{E(u)} := \prod_{j \in \Z/f\Z} \Fl^{E_j(u)}_{P_j} 
$$
where $E_j(u) = \sigma_j(E(u))$, and
$$
\Fl_K^{[a,b], E(u)} := \prod_{j \in \Z/f\Z} \Fl^{[a,b], E_j(u)}_{P_j}.
$$   

\begin{rmk} For now, the parabolic subgroups $P_j$ are arbitrary and they are allowed to be distinct. In Section~\ref{smooth modification}, the "shape" of the descent datum on Kisin modules will impose additional conditions on the $P_j$, which will ensure that they determine conjugate parahoric subgroups of $\GL_n$.  
\end{rmk}

For the chosen cocharacter $\mu$, we define the reflex field $F_{[\mu]}$ as the smallest subfield of $\overline{F}$ containing $F$ and over which the conjugacy class of $\mu$ is defined.  Let $\La_{[\mu]}$ denote the ring of integers of $F_{[\mu]}$. Since we have chosen $F$ to contain a copy of $K_0$, this is the union of the corresponding fields for each $\mu_j$. We will now define the local model as a scheme over $\Spec \La_{[\mu]}$; a priori it could be defined over the ring of integers in a smaller field (we just need the conjugacy class of $\mu$ and the parabolic subgroups $P_j$ to be defined over this field) but we will only need to consider the base change to  $\Spec \La_{[\mu]}$.

\begin{defn} \label{defnlocmodel} Let $S(\mu) \subset (\Gr_{\Res_{(K \otimes_{\Qp} F)/F} \GL_n})_{F_{[\mu]}}$ be the closed affine Schubert variety associated to $\{\mu\}$. For each $j\in \mathbb{Z}/f\mathbb{Z}$,  let $1_{\GL_n/P_j}$ denote the closed point of $\GL_n/P_j$ corresponding to $P_j$. Then the \emph{local model} $M(\mu)$ associated to $\mu$ is defined to be the Zariski closure of $\prod_{j\in \mathbb{Z}/f\mathbb{Z}}1_{\GL_n/P_j} \times S(\mu_j)$ in $\Fl^{E(u)}_K$. It is a flat projective scheme over $\Spec \La_{[\mu]}$.
\end{defn}

The main theorem on the geometry of local models is:
\begin{thm} \label{locmodels} The local model $M(\mu)$ is normal with reduced special fiber.  All irreducible components of $M(\mu) \otimes_{\La} \overline{\F}$ are normal and Cohen-Macaulay.
\end{thm}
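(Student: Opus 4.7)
The plan is to follow the Pappas--Zhu strategy of~\cite{PZ}, extended to the parahoric setting in~\cite{LevinLM}, where a statement of this type appears as one of the main theorems. The sketch below explains how that machinery applies to $M(\mu)$.

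First, I would reduce to one embedding at a time. Over $\overline{F}$ one has $\Fl^{E(u)}_K \cong \prod_{j\in\Z/f\Z} \Fl^{E_j(u)}_{P_j}$, and by Definition~\ref{defnlocmodel} the local model decomposes compatibly as $\prod_j M(\mu_j)$. Since normality, reducedness, and Cohen--Macaulay-ness are all preserved under products over a field and descend under faithfully flat base change, it suffices to establish the corresponding properties for each factor $M(\mu_j)$ separately over $\La_{[\mu_j]}$.

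Next, I would analyze the generic fiber. By the structural proposition on $\Fl^{Q(u)}_P$, the generic fiber of $M(\mu_j)$ is identified with the product $1_{\GL_n/P_j} \times S(\mu_j)$ inside $(\GL_n/P_j) \times \Gr_{\Res_{(K\otimes_{K_0,\sigma_j}F)/F}\GL_n}$. Affine Schubert varieties in (twisted) affine Grassmannians for $\GL_n$ are classically known to be normal and Cohen--Macaulay (in fact to have rational singularities), so the generic fiber has the desired properties.

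The heart of the argument is the special fiber. Using the isomorphism $\Fl^{Q(u)}_{P_j}\otimes_{\La}\mathbb{F} \cong \mathrm{Fl}_{P_{j,\mathbb{F}}}$, I would identify the special fiber of $M(\mu_j)$ set-theoretically with the union of closed affine Schubert varieties indexed by the $\mu_j$-admissible set in the extended affine Weyl group (cf.~\cite[(9.17)]{PZ}). Reducedness of the special fiber, and normality together with Cohen--Macaulay-ness of each geometric irreducible component, are then consequences of Zhu's proof of the \emph{coherence conjecture}, which produces compatible Frobenius splittings of Schubert varieties in affine flag varieties; these splittings force normality, reducedness, and Cohen--Macaulay-ness of closed affine Schubert varieties. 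Finally, $M(\mu)$ is flat over $\La_{[\mu]}$ by construction as the scheme-theoretic closure of a generically irreducible family inside an ind-projective $\La_{[\mu]}$-scheme. Combining flatness with normality of the generic fiber and reducedness of the special fiber yields Serre's $R_1 + S_2$ on $M(\mu)$, and hence normality of the total space.

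The main obstacle is the coherence-conjecture input for the mixed-characteristic affine flag variety attached to $\Res_{K/\Qp}\GL_n$ at parahoric level. In the Pappas--Zhu framework one must degenerate this mixed-characteristic object, via a Beilinson--Drinfeld-type Grassmannian, to an equal-characteristic twisted affine flag variety in which Frobenius splittings can be constructed and then transported back; extending this to arbitrary parabolic subgroups $P_j$ and general, possibly non-minuscule, cocharacters $\mu$ is exactly the technical content of~\cite{LevinLM}.
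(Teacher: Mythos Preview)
Your proposal is correct and follows the same approach as the paper: the paper's proof simply cites Theorem~1.1 of~\cite{PZ} in the tamely ramified case and Theorem~1.0.1 of~\cite{LevinLM} in the wildly ramified case, noting that the key input is Zhu's proof of the coherence conjecture~\cite{Zhu}. Your sketch unpacks precisely what those references do---reduction to a single embedding, normality of Schubert varieties on the generic fiber, and the coherence-conjecture/Frobenius-splitting input on the special fiber---so you are supplying the outline behind the citations rather than a different argument.
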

\begin{proof} In this level of generality, this is Theorem 1.0.1 of~\cite{LevinLM}. This builds on Theorem 1.1 of~\cite{PZ}, where the only restriction is that $K/\Qp$ must be tamely ramified. When $\mu$ is minuscule and $P = G$, the result goes back to Theorem B of~\cite{PR1}.
\end{proof}

\begin{rmk}The proof of Theorem~\ref{locmodels} uses the coherence conjecture of Pappas and Rapoport proven by ~\cite{Zhu}. 
\end{rmk}

\begin{rmk} Xuhua He has shown in~\cite{He} that the entire local model $M(\mu)$ is Cohen-Macaulay when the $\lambda_j$ (which are defined below in (\ref{lambda})) are all minuscule.  The local model is also known to be Cohen-Macaulay when $n = 2$ (via the argument sketched at the end of~\cite{Gflatness}, using the Kottwitz-Rapoport stratification below).  
\end{rmk} 

\begin{rmk} In the case when $n=2$ and $\mu_{j,\psi}=(1,0)$ for all $j\in \Z/f\Z$ and $\psi: K\iarrow \bar F$ an embedding extending $\sigma_j$ (which is the case corresponding to tamely Barsotti-Tate Galois representations), it can be shown that the local model coincides with the standard model, defined in terms of a Kottwitz determinant condition. The key point is that the standard model at hyperspecial level is flat, as shown in~\cite{PR1}; the same holds at parahoric level and therefore the standard model coincides with the local model in the sense of~\cite{PZ}, which is obtained by taking flat closure. The upshot is that in this special case, the entire local model $M(\mu)$ has a moduli interpretation. More details on the moduli interpretation and its relationship with tamely Barsotti-Tate Galois representations will appear in~\cite{CEGS}. 
\end{rmk}


Although there is no moduli interpretation for $M(\mu)$ in general, we can describe its special fiber in terms of affine Schubert varieties inside the affine flag variety. For each $j\in \Z/f\Z$, view $K_0$ as a subfield of $F$ via $\sigma_j$ and write $\mu_j=(\mu_{j,\psi})$, where $\psi$ runs over $K_0$-embeddings $K\iarrow \bar F$. Assume that each $\mu_{j,\psi}$ is a dominant cocharacter. Define
\begin{equation} \label{lambda}
\lambda_j = \sum_{\psi:K \hookrightarrow \overline{F}} \mu_{j, \psi}.
\end{equation}
\noindent We recall the definition of the $\lambda_j$-admissible set, which was introduced by Kottwitz and Rapoport; we follow the notation and constructions of Section 2 of~\cite{LevinLM}. 

Let $\mathcal{G}_0$ be the connected reductive group scheme $\mathrm{Res}_{(\cO_K\times_{\cO_{K_0},\sigma_j}\Lambda)/\Lambda}GL_n$ over $\mathrm{Spec}\ \Lambda$ whose generic fiber is $G$. Let $\mathcal{G}:=\mathcal{G}_0\otimes_{\Lambda}\Lambda[u]$ be the constant extension. If we set $G^\flat:=\mathcal{G}_{\mathbb{F}(\!(u)\!)}$, then $\mathcal{G}_{\mathbb{F}[\![u]\!]}$ is a reductive model of $\mathcal{G}_{\mathbb{F}(\!(u)\!)}$ and the parabolic $P_j$ determines a parahoric subgroup \[\mathcal{P}_j:=\{g\in \mathcal{G}(\mathbb{F}[\![u]\!])|\ g\mod{u}\in P_{j}(\mathbb{F})\}\subset G^\flat.\] Let $\widetilde{W}$ be the Iwahori-Weyl group of the split group $G^\flat_{\mathbb{\bar F}(\!(u)\!)}$, defined as $N(\bar\F(\!(u)\!))/T^\flat_1$, where $N$ is the normalizer of a maximal torus $T^\flat$ in $G^\flat$ and $T^\flat_1$ is the kernel of the Kottwitz homomorphism for $T^\flat$ (see Section 4.1 of~\cite{PRS} for more details). $\widetilde{W}$ sits in an exact sequence 
\[0\to X_*(T^\flat)\to \widetilde{W}\to W\to 0,\] 
where $W$ is the absolute Weyl group of $(G^\flat, T^\flat)$. Define 
\[\mathrm{Adm}(\lambda_j):=\{w\in \widetilde{W}|w\leq t_\lambda, \lambda\in W\cdot\lambda_j\}.\] The order $\leq$ used in the definition of $\mathrm{Adm}(\lambda_j)$ is the Bruhat order. Let $W_{P_j}\subset W$ be the subgroup corresponding to the parahoric $\mathcal{P}_j$. Define \[\mathrm{Adm}_{P_j}(\lambda_j):=W_{P_j}\mathrm{Adm}(\lambda_j)W_{P_j}.\]
Note that the $\mathrm{Adm}(\lambda_j)$ only depends on the geometric conjugacy class of $\lambda_j$.  

\begin{thm}\label{lm special} The geometric special fiber $\overline{M}(\mu)_{\mathbb{\bar F}}$ can be identified with the reduced union of a finite set of affine Schubert varieties in the affine flag variety $\Fl_{K,\mathbb{\bar F}}^{E(u)}$. 
Hence we have a stratification
$$ 
\overline{M}(\mu)_{\overline{\F}} = \bigcup_{ (\widetilde{w}_j) \in \prod_{j=0}^{f-1} \mathrm{Adm}_{P_j}(\lambda_j) } \prod_j S^{\circ}(\widetilde{w}_j)
$$
by locally closed reduced subschemes, where $S^\circ(\widetilde{w}_j)$ is an open affine Schubert cell and these are indexed by $j$ and by the admissible set $\mathrm{Adm}_{P_j}(\lambda_j)$.  
\end{thm}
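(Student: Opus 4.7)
The plan is to reduce the statement to a union-of-Schubert-varieties description and then identify precisely which Schubert varieties appear. Since $\Fl^{E(u)}_K \otimes_{\Lambda} \F$ has already been identified with the product of affine flag varieties $\prod_j \Fl_{P_{j,\F}}$, the geometric special fiber $\overline{M}(\mu)_{\overline{\F}}$ lives inside this product. By Theorem \ref{locmodels}, $\overline{M}(\mu)_{\overline{\F}}$ is reduced and flat of the same relative dimension as the generic fiber, so it suffices to identify its underlying set as the union of the closed affine Schubert varieties $\prod_j S(\widetilde{w}_j)$ for $(\widetilde{w}_j) \in \prod_j \mathrm{Adm}_{P_j}(\lambda_j)$. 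Once this is done, the stratification into locally closed $\prod_j S^\circ(\widetilde{w}_j)$ follows from the standard Bruhat decomposition of the affine flag variety, together with the fact that the closure of an open Schubert cell inside the special fiber equals the corresponding closed Schubert variety.

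For the inclusion $\overline{M}(\mu)_{\overline{\F}} \subseteq \bigcup \prod_j S(\widetilde{w}_j)$, I would use the standard degeneration picture: a point of the generic fiber lies in $\prod_j 1_{GL_n/P_j} \times S(\mu_j)$, and under the specialization of the Beilinson--Drinfeld-type family $\Fl^{E(u)}_K$ from the affine Grassmannian $\Gr_{\Res GL_n}$ (over the generic fiber) to the affine flag variety (over the special fiber), each generic Schubert cell $S(\mu_j)$ degenerates into the union of affine Schubert cells $S^\circ(\widetilde{w})$ for $\widetilde{w}$ in the $\mu_j$-admissible set; imposing the parahoric level $P_j$ enlarges this by the $W_{P_j}$-bi-invariance, giving exactly $\mathrm{Adm}_{P_j}(\lambda_j)$. (The passage from $\mu_j$ on the generic side to the cocharacter $\lambda_j = \sum_\psi \mu_{j,\psi}$ on the special side is the effect of Weil restriction: the residue field of $K$ at $\sigma_j$ collapses to a single copy of $\overline{\F}$.)

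The reverse inclusion is the deep part and is where the main obstacle lies. To show that every admissible element contributes a full Schubert variety to the special fiber, I would invoke the coherence conjecture of Pappas--Rapoport, proved by Zhu~\cite{Zhu}: it asserts an equality between the dimension of global sections of a canonical ample line bundle on the generic fiber $\prod_j S(\mu_j)$ and on the union of admissible Schubert varieties in the special fiber. Combined with flatness of $M(\mu)$ (so that these two dimensions automatically agree for the actual special fiber), this forces the special fiber to contain every admissible Schubert variety, not just a subset. This is exactly the mechanism used in Theorem 1.1 of~\cite{PZ} for the tamely ramified case and Theorem 1.0.1 of~\cite{LevinLM} for the wildly ramified case, and in our setting it applies to each factor indexed by $j \in \Z/f\Z$.

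The hard step is therefore the coherence comparison: one does not get the admissible set by direct geometric manipulation, but only by matching sheaf cohomology invariants across the family. Everything else — reducedness, the affine flag variety identification, and the translation of the stratification from closed Schubert varieties to open Schubert cells — is either contained in the results already cited or is formal from the structure of affine flag varieties.
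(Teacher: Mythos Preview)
Your proposal is correct and follows the same mathematical route as the paper, though at a different level of detail. The paper's own proof is a bare citation: it invokes Theorem~8.3 of \cite{PZ} (tame case) and Theorem~2.3.5 of \cite{LevinLM} (wild case), and takes the product over $j\in\Z/f\Z$. What you have written is essentially an outline of how those cited theorems are proved --- the degeneration giving one inclusion, and Zhu's proof of the coherence conjecture forcing the reverse inclusion via a dimension count on global sections. So there is no genuine difference in approach; you have simply unpacked the black box. One small correction: the precise references for the stratification statement are PZ Theorem~8.3 and Levin Theorem~2.3.5, whereas Theorem~1.1 of \cite{PZ} and Theorem~1.0.1 of \cite{LevinLM} (which you cite) are the normality/reducedness results already recorded here as Theorem~\ref{locmodels}.
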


\begin{rmk} The irreducible components of $\overline{M}(\mu)_{\overline{\F}}$ are indexed by the extremal elements of $\prod_{j=0}^{f-1} \mathrm{Adm}_{P_j}(\lambda_j)$ which are in bijection with the orbit of $(\lambda_j)$ under the Weyl group $\prod_j W_{P_j}$.
\end{rmk}

\begin{proof} This follows (by taking a product over the embeddings $\sigma_j$) from Theorem 8.3 of~\cite{PZ} when $K/\Qp$ is tamely ramified and Theorem 2.3.5 of~\cite{LevinLM}, when $K/\Qp$ is wildly ramified. 
\end{proof}

Finally, we recall a generalization of the loop group which acts on $M(\mu_j)$ and on $\Fl^{E_j(u)}_{P_j}$.    Define the pro-algebraic group $L^{+, E_j(u)} \GL_n$ over $\Spec \La$ by  
$$
L^{+, E_j(u)} \GL_n (R) = \varprojlim_r  \GL_n(R[u]/E_j(u)^r) = \varprojlim_{r} \Res_{(\La[u]/E_j(u)^r)/ \La} \GL_n (R). 
$$
We define a subgroup of $L^{+, E_j(u)} \GL_n$ by 
$$
L^{+, E_j(u)} \cP_j (R) := \{ g \in L^{+, E_j(u)} \GL_n (R) \mid g \mod u \in P_j(R) \}.
$$
Similarly, for any positive integer $r$, let 
$$
\cP_{j, r} := \{ g \in \Res_{(\La[u]/E_j(u)^r)/\La} \GL_n(R) \mid g \mod u \in P_j(R) \}.
$$

\begin{prop} \label{parahoricgroups} For any positive integer $r$, the functor $\cP_{j,r}$ is represented by a smooth, geometrically connected, group scheme of finite type over $\La$.  The functor $L^{+, E_j(u)} \cP_j$ is represented by an affine group scheme $($not of finite type$)$ over $\La$ which is formally smooth over $\La$.  
\end{prop}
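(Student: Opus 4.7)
The plan is to realize $\cP_{j,r}$ as a N\'eron-style dilatation of a smooth Weil restriction of $\GL_n$, and to obtain $L^{+,E_j(u)}\cP_j$ as the cofiltered limit of these dilatations.

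Set $A_r := \La[u]/E_j(u)^r$. Because $E_j(u)$ is monic of degree $e$, $A_r$ is finite free of rank $er$ as a $\La$-algebra, so standard Weil-restriction theory gives that $\cG_r := \Res_{A_r/\La}\GL_n$ is a smooth affine group scheme of finite type over $\La$ whose geometric fibers are products of copies of $\GL_n$ and hence geometrically connected. Since $E_j$ is Eisenstein, $A_r\otimes_\La\F = \F[u]/u^{er}$, so reduction mod $u$ yields a split homomorphism $\cG_{r,\F}\twoheadrightarrow\GL_{n,\F}$, and the smooth closed subgroup $P_{j,\F}\subset\GL_{n,\F}\subset\cG_{r,\F}$ plays the role of the center of a dilatation; one then interprets $\cP_{j,r}$ (cut out on $R$-points by the condition that the image of $g$ in the special fiber lie in $P_j$) as the N\'eron dilatation of $\cG_r$ along $P_{j,\F}$.

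By standard properties of dilatations of smooth affine group schemes along smooth closed subgroups of the special fiber (cf.\ BLR, \emph{N\'eron Models}, §3.2, or the discussion in \cite[§1]{PZ}), $\cP_{j,r}$ is representable by a smooth affine group scheme of finite type over $\La$ with the same generic fiber as $\cG_r$ and with special fiber fitting in a short exact sequence
$$ 1 \longrightarrow U_{j,r} \longrightarrow \cP_{j,r}\otimes\F \longrightarrow P_{j,\F} \longrightarrow 1, $$
where $U_{j,r} = \ker\bigl(\GL_n(\F[u]/u^{er}) \to \GL_n(\F)\bigr)$ is a connected unipotent group, an iterated extension of copies of $\bG_{a,\F}$. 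Geometric connectedness of $\cP_{j,r}$ then follows from the geometric connectedness of both its generic and special fibers together with $\La$-flatness.

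For $L^{+,E_j(u)}\cP_j$, the surjections $A_{r+1}\twoheadrightarrow A_r$ induce transition maps $\cP_{j,r+1}\twoheadrightarrow\cP_{j,r}$ that are smooth and surjective; their kernels, consisting of elements of the form $1 + E_j(u)^r h$ with $h \in M_n(A_1)$, are isomorphic to the vector group attached to the free $\La$-module $M_n(A_1)$ of rank $n^2 e$. The cofiltered limit of affine schemes along affine transition morphisms is representable by an affine scheme, so $L^{+,E_j(u)}\cP_j$ is representable by an affine (non-finite-type) group scheme over $\La$. Formal smoothness follows from a standard step-by-step lifting argument: given a square-zero thickening $B\twoheadrightarrow B/I$ of $\La$-algebras and a compatible system $(g_r)\in L^{+,E_j(u)}\cP_j(B/I)$, the smoothness of each $\cP_{j,r}$ lets one lift $g_r$ to some $\tilde g_r\in\cP_{j,r}(B)$, and the smoothness of the transition maps with smooth connected kernels allows one to adjust these lifts inductively so as to be compatible. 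The main technical obstacle is the identification of $\cP_{j,r}$ as a dilatation and the explicit description of the transition kernels as vector groups; once these are in place, all the claimed properties follow from standard facts about dilatations of smooth affine group schemes over a discrete valuation ring and cofiltered limits of such.
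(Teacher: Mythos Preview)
Your identification of $\cP_{j,r}$ as a N\'eron dilatation of $\cG_r$ over the DVR $\La$ is not correct, and this is the crux of the argument. The defining condition is ``$g \bmod u \in P_j$'', which is a condition modulo the ideal $(u)\subset A_r\otimes_\La R$, \emph{not} a condition on the special fiber of $\cG_r$ over $\Spec\La$. These are genuinely different: the closed subscheme $\{u=0\}\subset\Spec A_r$ is $\Spec(\La/E_j(0)^r)$, a thickening of the closed point of $\Spec\La$, not the closed point itself. Your parenthetical ``cut out on $R$-points by the condition that the image of $g$ in the special fiber lie in $P_j$'' is a misreading of the definition.

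There is also an internal inconsistency that flags the problem. Any dilatation of a smooth affine group over a DVR preserves relative dimension, so the special fiber of your dilatation would have dimension $n^2er=\dim\cG_{r,\F}$. But your own exact sequence $1\to U_{j,r}\to\cP_{j,r}\otimes\F\to P_{j,\F}\to 1$ gives dimension $\dim P_j+n^2(er-1)$, which is strictly smaller when $P_j\neq\GL_n$. So the object you describe cannot be a single dilatation of $\cG_r$ along any closed subgroup of its special fiber.

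The paper's route is different and avoids this. One first forms the parahoric group scheme $\cH_j$ over $\La[u]$: this \emph{is} where a dilatation enters, namely the dilatation of $\GL_{n,\La[u]}$ along $P_{j,\La}$ sitting in the fiber over the Cartier divisor $\{u=0\}\subset\Spec\La[u]$. This $\cH_j$ is smooth over $\La[u]$ with geometrically connected fibers. Then $\cP_{j,r}=\Res_{A_r/\La}(\cH_j|_{A_r})$, and smoothness and geometric connectedness follow from the general behaviour of Weil restriction along the finite free map $\La\to A_r$ (this is what the citations of \cite[Prop.~A.5.2(4), A.5.9]{PRed} are doing). So the dilatation idea is relevant, but it must be carried out over the two-dimensional base $\La[u]$ along the divisor $\{u=0\}$, and then Weil-restricted, rather than over the DVR $\La$ along its closed point.

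Your treatment of $L^{+,E_j(u)}\cP_j$ as a cofiltered limit along smooth affine transition maps, and the step-by-step lifting argument for formal smoothness, is fine once the finite-level groups $\cP_{j,r}$ are correctly identified.
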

\begin{proof} This is consequence of some general properties about Weil restriction along finite flat morphisms. The fact that $\cP_{j, r}$ is smooth is a consequence of Proposition A.5.2(4) in \cite{PRed}.  The group scheme $\cP_{j, r}$ has geometrically connected fibers by Proposition A.5.9 in \cite{PRed}.   
\end{proof}  

\begin{prop} \label{niceaction} The group $\prod_{j \in \Z/f\Z}L^{+, E_j(u)} \cP_j$ acts on $\Fl_K^{E(u)}$.  For any cocharacter $\mu$, $M(\mu)$ is stable and the action of  $\prod_{j \in \Z/f\Z}L^{+, E_j(u)} \cP_j$ on $M(\mu)$ factors through $\prod_{j \in \Z/f\Z} \cP_{j, N}$ for some $N$ sufficiently large.  
\end{prop}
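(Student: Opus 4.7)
The plan is to handle the three assertions in turn. First, the action is constructed explicitly via the Beauville--Laszlo identification $\Gr^{E_j(u)} \simeq \Gr^{E_j(u)}_{\mathrm{loc}}$: for $g \in L^{+,E_j(u)} \cP_j(R)$ and a triple $(L, \beta, \eps) \in \Fl^{E_j(u)}_{P_j}(R)$, set $g \cdot (L,\beta, \eps) := (L, g\beta, \eps)$, where $g$ is regarded as an element of $\GL_n(\widehat{R[u]}_{(E_j(u))})$ acting by left multiplication on the trivialization $\widehat \beta$ of the $E_j(u)$-adic completion $\widehat L$. Well-definedness on isomorphism classes is immediate, and the condition $g \mod u \in P_j$ ensures compatibility with the $P_j$-filtration on $L/uL$ under the forgetful map $\mathrm{pr}$. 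The action on $\Fl^{E(u)}_K$ is obtained by taking the product over $j \in \Z/f\Z$.

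For the stability of $M(\mu)$: since $L^{+,E_j(u)} \cP_j$ and $M(\mu)$ are flat over $\La_{[\mu]}$, the open subscheme $(L^{+,E_j(u)} \cP_j)[1/p] \times M(\mu)[1/p]$ is dense in $L^{+,E_j(u)} \cP_j \times_{\La_{[\mu]}} M(\mu)$. The preimage of $M(\mu)$ under the action morphism is a closed subscheme containing this dense open, so it equals the whole product. Thus it suffices to verify stability on the generic fiber, where $L^{+,E_j(u)} \cP_j[1/p]$ reduces to $P_j$ on the $\GL_n/P_j$ factor (fixing the identity coset $1_{\GL_n/P_j}$) and acts on the affine Grassmannian factor via the positive loop group, which stabilizes $S(\mu_j)$ by the Cartan decomposition.

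For the factorization, I would use that $M(\mu)$ is finite type and projective by Theorem~\ref{locmodels}, so it embeds into $\Fl^{[a,b], E(u)}_K$ for suitable integers $a \le b$. Any element $g \in L^{+,E_j(u)} \cP_j$ with $g \equiv 1 \pmod{E_j(u)^N}$ for $N \geq a + b$ stabilizes every lattice sandwiched between $E_j(u)^{-a} L_{0,R}$ and $E_j(u)^b L_{0,R}$, and the induced automorphism $\alpha = \widehat\beta^{-1} g \widehat\beta$ of $L$ preserves every $P_j$-filtration; hence $g$ acts trivially on $M(\mu)$. This yields the desired factorization of the action through $\prod_j \cP_{j,N}$.

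The hardest step is the verification that $\alpha = \widehat\beta^{-1} g \widehat\beta$ preserves the $P_j$-filtration on $L/uL$ for $g$ in the congruence kernel, uniformly in the base ring $R$. This reduces to the standard fact that elements of the positive loop group deep in the congruence subgroup reduce to the identity on the parahoric quotient once the congruence level is adapted to the chosen Schubert bound, a computation in the theory of affine flag varieties that appears implicitly in \cite{PZ} and \cite{LevinLM}.
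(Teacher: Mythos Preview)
Your proposal is correct and follows essentially the same strategy as the paper, but the order of the arguments is reversed, and this matters for cleanliness.

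The paper first asserts that the action on the \emph{entire} bounded piece $\prod_j \Fl^{E_j(u),[a,b]}_{P_j}$ already factors through the finite-type quotient $\prod_j \cP_{j,r}$ (with $r = b-a$). This is treated as a direct consequence of the lattice description (Definition~\ref{BDlattice}): a point is determined by a submodule of $E_j(u)^{-a} L_{0,R}/E_j(u)^{b} L_{0,R}$ together with a $P_j$-filtration, and the action visibly only depends on $g \bmod E_j(u)^{r}$. Having done this, the stability of $M(\mu)$ is deduced from flatness of the \emph{finite-type} smooth group scheme $\cP_{j,r}$ (Proposition~\ref{parahoricgroups}), reducing to the generic fiber where $S(\mu)$ is a union of loop-group orbits.

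You instead establish stability first, using flatness of the pro-group $L^{+,E_j(u)}\cP_j$ itself. This works, but flatness of the infinite-type group is not stated in the paper and needs its own justification (it follows from being an inverse limit of smooth affine schemes along smooth affine transition maps, but you should say so). The paper's ordering avoids this by immediately passing to $\cP_{j,r}$, whose smoothness over $\La$ is already recorded.

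Regarding your ``hardest step'': once one works entirely on the bounded lattice model $\Fl^{E_j(u),[a,b]}_{P_j}$, the filtration issue dissolves. The action $g\cdot(M,\eps_M)=(gM,g_*\eps_M)$ and the congruence $g\equiv 1 \bmod E_j(u)^{r}$ together force $g|_M$ to act trivially on the finite $R[u]/E_j(u)^{r}$-module $E_j(u)^{-a}L_0/E_j(u)^{b}L_0$; the compatibility with $\eps_M$ is then automatic. So the computation you flag as delicate is in fact the elementary observation the paper takes for granted; there is no need to invoke \cite{PZ} or \cite{LevinLM} for it. (Your bound $N \ge a+b$ versus the paper's $r=b-a$ is a sign-convention artifact in Definition~\ref{BDlattice} and not substantive.)
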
 
\begin{proof} Choose $a, b$ such that $M(\mu) \subset \prod_{j \in \Z/f\Z} \Fl^{E_j(u), [a,b]}_{P_j}$.  The action of  $\prod_{j \in \Z/f\Z}L^{+, E_j(u)} \cP_j$ on  $\prod_{j \in \Z/f\Z} \Fl^{E_j(u), [a,b]}_{P_j}$ is through the group scheme  $\prod_{j \in \Z/f\Z} \cP_{j, r}$ for $r = b-a$.  Since $\prod_{j \in \Z/f\Z} \cP_{j, r}$ is flat (even smooth) over $\La$ by Proposition \ref{parahoricgroups}, stability of $M(\mu)$ follows from the fact that the generic fiber $S(\mu)$ is a union of orbits for the loop group of $\Res_{K/\Qp} \GL_n$.  
\end{proof} 
 
\begin{rmk} An action of pro-algebraic group on a ind-scheme which satisfies the property in Proposition \ref{niceaction} is ''nice'' in the sense of \cite{Gaitsgory}.
\end{rmk}

\section{Kisin modules with descent datum}\label{Kisin modules}

In this section, we will consider moduli of Kisin modules of finite height for the field $K$ together with tame descent datum for $L/K$. We work over the category $\Nilp_{\La}$ of $\La$-algebras $R$ on which $p^N = 0$ for some $N \gg 0$.  

If $R$ is such an algebra, then $(W\otimes _{\Zp}R)[\![v]\!]$ has an $R$-linear action of $\varphi$, defined by (the lift of) Frobenius on $W$ and $\varphi(v)=v^p$.  

Recall that $\Delta = \Gal(L/K)$ is a cyclic group of order $p^f-1$.  For any $g \in \Delta$ and any $R \in \Nilp_{\La}$, we let $\widehat{g}$ be the automorphism of $(W \otimes_{\mathbb{Z}_p} R)[\![v]\!]$ given by $v \mapsto (g(\pi_L)/\pi_L \otimes 1)v = (\omega_f(g) \otimes 1) v$, which acts trivially on the coefficients.   

We have a decomposition $W\otimes_{\mathbb{Z}_p}\Lambda \simeq \oplus_{j=0}^{f-1} \Lambda$, where $\sigma_j = \sigma_0 \circ \phz^{-j}:W \iarrow \La$ corresponds to the projection onto the $j$th factor in the direct sum decomposition. We will generally consider $j$ modulo $f$. For any $R\in \Nilp_{\La}$, we get an induced decomposition 
$$
(W \otimes_{\mathbb{Z}_p} R)[\![v]\!] \cong \oplus_{j \in  \Z/f\Z}  R[\![v]\!].
$$
Under this isomorphism, we have $\widehat{g}(v) = (\sigma_0 \circ \omega_f(g), \sigma_1 \circ \omega_f(g), \sigma_2 \circ \omega_f(g), \ldots, \sigma_{f-1} \circ \omega_f(g)) v$.  

Similarly, for any $(W \otimes_{\mathbb{Z}_p} R)[\![v]\!]$-module $M$, we write 
$$
M =\oplus_{j \in  \Z/f\Z} M^{(j)}.
$$
for the induced decomposition of $M$. Each $M^{(j)}$ is an $R[\![v]\!]$-direct summand of $M$.  

\begin{defn} \label{action} Let $\fM_R$ be an $(W \otimes_{\mathbb{Z}_p} R)[\![v]\!]$-module.  A \emph{semilinear action} of $\Delta$ on $\fM_R$ is collection of $\widehat{g}$-semilinear bijections $\widehat{g}:\fM_R \ra \fM_R$  for each $g \in \Delta$ such that
$$
\widehat{g} \circ \widehat{h} = \widehat{gh}
$$
for all $g, h \in \Delta$. 
\end{defn}


Note that $P(v)$, the minimal polynomial for $\pi_L$, is fixed by $\widehat{g}$ for all $g$.  Thus, $((W \otimes_{\mathbb{Z}_p} R)[\![v]\!])[1/P(v)]$ inherits a semilinear action of $\Delta$ for any $R \in \Nilp_{\La}$.   

\begin{defn} \label{defnheight} Let $R$ be any $\La$-algebra.  A \emph{Kisin module} (with bounded height) over $R$ is a finitely generated projective $(W \otimes_{\Zp} R)[\![v]\!]$-module $\fM_R$, which is Zariski locally on $\mathrm{Spec}\ R$ finite free of constant rank over $(W \otimes_{\Zp} R)[\![v]\!]$, together with an isomorphism $\phi_{\fM_R}:\phz^*(\fM_R)[1/P(v)] \cong \fM_R[1/P(v)]$. 

We say that $(\fM_R, \phi_{\fM_R})$ has \emph{height in $[a, b]$} if
$$
P(v)^{a} \fM_R \supset \phi_{\fM_R}(\phz^*(\fM_R)) \supset P(v)^{b} \fM_R
$$
as submodules of $\fM_R[1/P(v)]$.  
\end{defn}   

\begin{defn}  A \emph{Kisin module with descent datum} over $R$ is a Kisin module $(\fM_R, \phi_{\fM_R})$ together with a semilinear action of $\Delta$ given by $\{ \widehat{g} \}_{g \in \Delta}$ which commutes with $\phi_{\fM_R}$, i.e., for all $g \in \Delta$,
$$
\phz^*(\widehat{g}) \circ \phi_{\fM_R} = \phi_{\fM_R} \circ \widehat{g}.
$$
\end{defn}


Fix integers $[a,b]$ with $a \leq b$ and a positive integer $n$.  We take $X^{[a,b]}$ to be the fpqc stack over $\Nilp_{\La}$ such that $X^{[a,b]}(R)$ is the category  of Kisin modules over $R$ of rank $n$ with height in $[a,b]$, with pullback defined in the obvious way (see \S 2.a in \cite{PRcoeff}). Similarly, we define the fpqc stack $Y^{[a,b],\Delta}$, where $Y^{[a,b], \Delta}(R)$ is the category of Kisin modules of rank $n$ with descent datum over $R$ and height in $[a,b]$.  We will need some auxiliary spaces as well. 

\begin{defn} Fix $N > b - a$.   Let $\widetilde{X}^{[a,b]}$ be the fpqc stack over $\Nilp_{\La}$ given by 
$$
\widetilde{X}^{[a,b]}(R) := \{ (\fM_R, \alpha_R) \mid \fM_R \in X^{[a,b]}(R), \alpha_R:\fM_R \cong  R[\![v]\!]^n \mod P(v)^N \}.
$$
There is also an infinite version:
$$
\widetilde{X}^{[a,b], (\infty)}(R) := \{ (\fM_R, \alpha_R) \mid \fM_R \in X^{[a,b]}(R), \alpha_R:\fM_R \cong  R[\![v]\!]^n \}.
$$
\end{defn}

We leave out $N$ from the notation $\widetilde{X}^{[a,b]}$, though of course the stack does depend on $N$. The natural maps $\widetilde{X}^{[a,b]} \ra X^{[a,b]}$ (resp. $\widetilde{X}^{[a,b], (\infty)} \ra X^{[a,b]}$) are formally smooth.  For any $r \geq 1$, set
$$
X^{[a,b]}_r := X^{[a,b]} \otimes_{\La} \La/p^r \text{ and } Y^{[a,b], \Delta}_r := Y^{[a,b], \Delta} \otimes_{\La} \La/p^r.
$$

\begin{thm} \label{reprofspace} For any $r \geq 1$, $X^{[a,b]}_r$ is representable by an Artin stack of finite type over $\Spec \La/p^r$.  Furthermore,  $\widetilde{X}^{[a,b]}_r$ is represented by a scheme of finite type over $\Spec \La/p^r \La$. 
\end{thm}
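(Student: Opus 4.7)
The plan is to first establish representability of $\widetilde X^{[a,b]}_r$ as a finite type scheme, and then deduce the stack statement for $X^{[a,b]}_r$ by quotienting out the trivialization. The forgetful map $\widetilde X^{[a,b]}_r\to X^{[a,b]}_r$ is (Zariski-locally on $\Spec R$, since $\fM_R$ is Zariski-locally free) a torsor under the group $G_N := \Res_{(\La[v]/P(v)^N)/\La}\GL_n \otimes_\La \La/p^r$, which by the Weil restriction arguments of Proposition \ref{parahoricgroups} is smooth, affine, and of finite type over $\La/p^r$. Hence once $\widetilde X^{[a,b]}_r$ is known to be a finite type scheme, the presentation $X^{[a,b]}_r \simeq [\widetilde X^{[a,b]}_r/G_N]$ exhibits it as an Artin stack of finite type over $\Spec \La/p^r$.

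For $\widetilde X^{[a,b]}_r$, the strategy is to identify it, Zariski-locally on the base, with a closed subfunctor of a finite-type fiber bundle over the bounded Beilinson-Drinfeld Grassmannian $\Gr^{P(v),[a,b]}\otimes_\La \La/p^r$. Two crucial observations combine here: first, since $p^r = 0$ in $R$, the $v$-adic and $P(v)$-adic topologies on $R[v]$ agree (because $P(v)$ differs from $v^{e(p^f-1)}$ by a $p$-multiple and $p$ is nilpotent), so $R[\![v]\!]$ coincides with $\widehat{R[v]}_{(P(v))}$; second, the Beauville-Laszlo equivalence $\Gr^{P(v)}\simeq \Gr^{P(v)}_{\mathrm{loc}}$ proved earlier in Section 2 lets us pass between global and local formulations. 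Concretely, after Zariski-localizing so that $\fM_R$ becomes free and the trivialization $\alpha_R$ lifts to a full trivialization, the Frobenius $\phi_{\fM_R}$ becomes a matrix $\Phi$ in the bounded loop group, and the image $\Phi\cdot R[\![v]\!]^n$ sits inside $R[\![v]\!]^n[1/P(v)]$ sandwiched between $P(v)^a R[\![v]\!]^n$ and $P(v)^b R[\![v]\!]^n$, giving a point of $\Gr^{P(v),[a,b]}(R)$. Recording $\Phi$ itself (rather than just its orbit) together with the $G_N$-level structure from $\alpha_R$ embeds $\widetilde X^{[a,b]}_r$ into a finite-type scheme, using that $\Gr^{P(v),[a,b]}$ is projective over $\La$.

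The main obstacle I anticipate is two-fold: (a) verifying that the local identification glues globally to a closed immersion into a finite-type scheme, and (b) showing that the choice $N > b-a$ indeed rigidifies the moduli problem --- i.e., any $\phi_{\fM_R}$-equivariant automorphism of $\fM_R$ congruent to the identity modulo $P(v)^N$ is already trivial. Point (b) is the key lemma: because the height condition $P(v)^a\fM_R \supset \phi_{\fM_R}(\phz^\ast\fM_R) \supset P(v)^b\fM_R$ forces $\phi_{\fM_R}$ to shift $P(v)$-adic depth by at most $b - a$, any $\phi_{\fM_R}$-equivariant automorphism that is the identity mod $P(v)^N$ is forced, by the Frobenius compatibility equation $\phi \circ \phz^\ast(g) = g \circ \phi$, to also be the identity mod $P(v)^{N'}$ for every $N' \geq N$, hence trivial by $v$-adic completeness of $R[\![v]\!]$. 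Once this rigidification is established, the finite-type scheme structure on $\widetilde X^{[a,b]}_r$ follows by standard representability criteria, and the stack statement then follows by the quotient presentation above.
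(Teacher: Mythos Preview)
Your quotient presentation $X^{[a,b]}_r \simeq [\widetilde X^{[a,b]}_r/G_N]$ and the rigidification lemma (b) are both correct and are indeed key ingredients in \cite{PRcoeff}, but there is a genuine gap in deducing that $\widetilde X^{[a,b]}_r$ is of finite type. The problematic step is ``recording $\Phi$ itself \ldots\ embeds $\widetilde X^{[a,b]}_r$ into a finite-type scheme'': the matrix $\Phi$ ranges over the bounded loop group, which is infinite-dimensional over $R$, so no such finite-type target exists. What you actually have after lifting the trivialization is a point of $\widetilde X^{[a,b],(\infty)}_r$, and passing back to $\widetilde X^{[a,b]}_r$ means taking the quotient by the twisted-conjugation action $\Phi \mapsto g\,\Phi\,\phz(g)^{-1}$ of the congruence kernel $K_N := \ker(L^+\GL_n \to G_N)$. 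Your lemma (b) shows this action is \emph{free}, hence $\widetilde X^{[a,b]}_r$ has no nontrivial automorphisms and is therefore a sheaf of sets; but freeness of a pro-unipotent group action does not by itself produce a finite-type quotient. One still needs, for instance, that the map to $\Gr^{P(v),[a,b]}$ has fibers of finite type --- essentially a version of the smoothness proved later in the paper as Theorem~\ref{formal-smoothness of psi} --- and your sketch does not supply this.

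The paper's own proof takes a quite different and much shorter route: it cites \cite[Theorem~2.1]{PRcoeff} directly for the Artin-stack statement and for the fact that $\widetilde X^{[a,b]}_1$ is a finite-type scheme, then passes to general $r$ by observing that $\widetilde X^{[a,b]}_1 \subset \widetilde X^{[a,b]}_r$ is a nilpotent thickening and invoking general results (\cite{stacks}, \cite{Rydh}) to the effect that a thickening of a scheme inside algebraic spaces remains a scheme. The substantive finite-type argument you are attempting is thus entirely delegated to \cite{PRcoeff}.
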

\begin{proof}  The first statement follows from \cite[Theorem 2.1]{PRcoeff} as does the fact that $\widetilde{X}^{[a,b]}_1$ is represented by a finite type scheme.  Since the inclusion $\widetilde{X}^{[a,b]}_1 \subset \widetilde{X}^{[a,b]}_r $ is a nilpotent thickening, $\widetilde{X}^{[a,b]}_r$ is also a represented by a scheme by Lemma 87.3.8 of~\cite{stacks} based on the corresponding fact for algebraic spaces and on the fact that a thickening of a scheme in the category of algebraic spaces is a scheme, which is Corollary 8.2 of~\cite{Rydh}.
\end{proof}

We will return to this theorem with descent datum in Theorem~\ref{PRdescent}. First, we discuss the \emph{Galois type} or tame type of a Kisin module with descent datum.   Let $\fM_R$ be a Kisin module with descent datum over $R$.   Write 
$$
\fM_R =  \bigoplus_{j \in \Z/f\Z} \fM_R^{(j)}.
$$
We get a semilinear $\Delta$-action on $\fM^{(j)}_R$, on the Frobenius pullback $\phz^*(\fM_R^{(j)})$ as well as on the reduction $\phz^*(\fM^{(j)}_R) /v \phz^*(\fM^{(j)}_R)$.

\begin{defn} \label{Galoistype} Let $\fM_R \in Y^{[a,b], \Delta}(R)$ and set $D^{(j)}_R := \fM^{(j)}_R/v \fM^{(j)}_R$. Then we say that $\fM_R$ has \emph{type} $\tau= \oplus_{i=1}^n \chi_i$, with $\chi_i\in \Delta^*$, if for all $j \in \Z/f \Z$
$$
D^{(j)}_R \cong \tau
$$
as linear representations of $\Delta$.  
\end{defn}

\begin{rmk} In Definition~\ref{Galoistype}, we require that the type be the same for all $j\in \{0,\dots,f-1\}$. If $R=\Lambda$, the fact that $\phi_R$ commutes with the descent datum implies that the type must be the same on each component $D^{(j)}_R$. 

However, this need not always hold if $R=\F$. For example, take $f=2, K_0=K=\mathbb{Q}_{p^2}$; $L$ will be a ramified extension of $K$ obtained by adjoining a $(p^2-1)$st root of $p$. Let $\fM=\fM^{(1)}\oplus \fM^{(2)}$ be a rank $1$ Kisin module over $\F$, with $e^{(i)}$ a generator for $\fM^{(i)}$ for $i=1,2$. In $\F[\![v]\!]$, we have $P(v)=v^{p^2-1}$. Then we can have $\mathrm{Gal}(L/K)$ act on $e^{(i)}$ by $\omega_2^{i}$ and we can simply take $\phi_\F(e^{(1)})=v^{p^2-2}\cdot e^{(2)}$ and $\phi_\F(e^{(2)})=v\cdot e^{(1)}$. Then $\fM$ is a rank $1$ Kisin module with height in $[0,1]$. 

Since we are ultimately interested in relating the Kisin modules with tame descent data to Galois representations over $F$ (see Section~\ref{connection to Galois}), we do not lose anything from imposing this condition. 
\end{rmk}

\begin{prop} If $\fM_R$ is a Kisin module with descent datum of type $\tau$, then 
$$
^{\phz} D^{(j)}_R := \phz^*(\fM^{(j)}_R)/v \phz^*(\fM^{(j)}_R) \cong  \tau.
$$
\end{prop}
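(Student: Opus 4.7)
The plan is to identify $^{\phz} D_R^{(j)}$ as a $\Delta$-representation with one of the other $D_R^{(j')}$, at which point the isomorphism with $\tau$ will be immediate from the hypothesis. Morally, the Frobenius twist $v\mapsto v^p$ becomes trivial modulo $v$ (both $v$ and $v^p$ vanish there), and the Frobenius permutation of the isotypic factors only shifts the index $j$, which is irrelevant because all $D_R^{(j)}$ are abstractly $\tau$.

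To carry this out, I would first analyze how $\phz$ interacts with the decomposition $(W \otimes_{\Z_p} R)[\![v]\!] \cong \prod_{j \in \Z/f\Z} R[\![v]\!]$ induced by the embeddings $\sigma_j = \sigma_0 \circ \phz^{-j}$. Because $\phz$ acts as Frobenius on $W$ and sends $v \mapsto v^p$, a short calculation using the relation $\sigma_j\circ \phz = \sigma_{j-1}$ shows that it cyclically permutes the idempotents, $\phz(e_j) = e_{j+1}$. Unwinding the tensor product $(W\otimes R)[\![v]\!]\otimes_{(W\otimes R)[\![v]\!],\phz}\fM_R$ with this relation yields a canonical identification of $R[\![v]\!]$-modules
\[(\phz^*\fM_R)^{(j)} \;\cong\; R[\![v]\!]\otimes_{R[\![v]\!],\, v\mapsto v^p}\fM_R^{(j-1)},\]
i.e.\ the $j$-th isotypic component of $\phz^*\fM_R$ is just the naive Frobenius pullback along $v\mapsto v^p$ of $\fM_R^{(j-1)}$.

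Reducing modulo $v$ on the outer $R[\![v]\!]$ then produces a canonical isomorphism $^{\phz}D_R^{(j)} \xrightarrow{\sim} D_R^{(j-1)}$ of $R$-modules, because the composite $R[\![v]\!]\xrightarrow{v\mapsto v^p} R[\![v]\!]\xrightarrow{v\mapsto 0} R$ coincides with the single evaluation map $v\mapsto 0$, so the Frobenius twist disappears in the tensor product. To promote this to a $\Delta$-equivariant isomorphism, I would use that $\phz$ and $\widehat g$ commute as ring endomorphisms of $(W\otimes R)[\![v]\!]$ (both send $v$ to $\omega_f(g)^p v^p$), so the formula $\widehat g(a\otimes m):=\widehat g(a)\otimes\widehat g(m)$ defines a semilinear $\Delta$-action on $\phz^*\fM_R$ that respects the isotypic decomposition. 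Modulo $v$ the outer factor is annihilated, leaving precisely the natural linear $\Delta$-action on $D_R^{(j-1)}$. Invoking the hypothesis $D_R^{(j-1)}\cong \tau$ then completes the proof.

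The only aspect requiring real care is tracking the semilinearity characters $\sigma_j\circ\omega_f$ under the index shift $j\mapsto j-1$ in the tensor product identification; I expect no serious obstruction beyond this bookkeeping, since the commutation of $\phz$ and $\widehat g$ is exactly designed to absorb the extra factor of $\omega_f(g)^{p-1}$ that arises.
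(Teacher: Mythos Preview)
Your argument is correct and rests on the same underlying observation as the paper---that the Frobenius twist $v\mapsto v^p$ becomes the identity after reducing modulo $v$---but you take a longer route to get there. You interpret $\phz^*(\fM_R^{(j)})$ as the $j$-th component of the global pullback $\phz^*(\fM_R)$, which forces you to track the cyclic permutation of idempotents and produces the index shift ${}^{\phz}D_R^{(j)}\cong D_R^{(j-1)}$. The paper instead reads $\phz^*(\fM_R^{(j)})$ literally as the pullback of the $R[\![v]\!]$-module $\fM_R^{(j)}$ along $v\mapsto v^p$, with no $W$-action to worry about; then the single $\Delta$-equivariant $R$-linear map $m\mapsto 1\otimes m$ from $\fM_R^{(j)}$ into $\phz^*(\fM_R^{(j)})$ is visibly an isomorphism modulo $v$, giving ${}^{\phz}D_R^{(j)}\cong D_R^{(j)}$ in one line. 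Your approach buys nothing extra here, but the bookkeeping you flag (the semilinearity characters under $j\mapsto j-1$) is exactly what the paper's formulation sidesteps.
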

\begin{proof} 
The natural $R$-linear injection $\fM_R^{(j)} \ra \phz^*(\fM^{(j)}_R)$ given by $m \mapsto 1 \otimes m$ is $\Delta$-equivariant and induces an isomorphism modulo $v$.
\end{proof}

\begin{prop} \label{consttype} Let $\fM_R \in Y^{[a,b], \Delta}(R)$. Consider
$$
D^{(j)}_R = \oplus_{\chi \in \Delta^*} D^{(j)}_{R, \chi}
$$
where $D^{(j)}_{R, \chi}$ is the $\chi$-isotypic piece.  Then $D^{(j)}_{R, \chi}$ is a finite projective $R$-module and hence the rank of $D^{(j)}_{R, \chi}$  is locally constant on $\Spec R$.  
\end{prop}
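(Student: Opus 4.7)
The plan is to first establish that $D^{(j)}_R$ itself is a finitely generated projective $R$-module, and then to realize each isotypic piece $D^{(j)}_{R,\chi}$ as an $R$-module direct summand using character idempotents in $\La[\Delta]$. Since projectivity passes to direct summands and the rank of a finite projective module over a commutative ring is locally constant on its spectrum, this will be enough.

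For the first step, recall that $\fM_R$ is finite projective over $(W \otimes_{\Z_p} R)[\![v]\!]$, Zariski-locally free of constant rank. The decomposition $(W \otimes_{\Z_p} R)[\![v]\!] \cong \bigoplus_{j \in \Z/f\Z} R[\![v]\!]$ of the base is a product decomposition of rings, so each $\fM_R^{(j)}$ is finite projective over $R[\![v]\!]$, and reducing modulo $v$ (which is a central non-zero-divisor) yields $D^{(j)}_R$ as a finite projective $R$-module.

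For the second step, the crucial observation is that $|\Delta| = p^f - 1$ is invertible in $\La$ (since $\La$ has residue characteristic $p$), and $\La$ contains a full set of $(p^f-1)$-st roots of unity: indeed, by hypothesis we have fixed an embedding $K_0 \hookrightarrow F$, and $K_0 = W(k)[1/p]$ contains the Teichm\"uller lifts of $k^\times$. Consequently the orthogonal idempotents
\[
e_\chi := \frac{1}{|\Delta|} \sum_{g \in \Delta} \chi(g)^{-1} g \in \La[\Delta]
\]
are defined for every $\chi \in \Delta^*$ and satisfy $\sum_\chi e_\chi = 1$ and $e_\chi e_{\chi'} = \delta_{\chi,\chi'} e_\chi$. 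Acting on the $R[\Delta]$-module $D^{(j)}_R$ via the semilinear $\Delta$-action (which is in fact $R$-linear on the reduction $D^{(j)}_R$, since the action of $\widehat{g}$ on $R[\![v]\!]$ becomes trivial modulo $v$), these idempotents produce the $\chi$-isotypic decomposition $D^{(j)}_R = \bigoplus_{\chi} D^{(j)}_{R,\chi}$ with $D^{(j)}_{R,\chi} = e_\chi D^{(j)}_R$, exhibiting each summand as a direct summand, hence finite projective over $R$. The rank then varies locally constantly on $\Spec R$ by standard properties of finite projective modules.

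The only subtle point — and what I would check carefully — is the $R$-linearity of the $\Delta$-action on $D^{(j)}_R$, which is needed to ensure the idempotents in $\La[\Delta]$ really produce $R$-module summands (not just additive summands). This is immediate from Definition \ref{action}: since $\widehat{g}$ acts on $(W \otimes_{\Z_p} R)[\![v]\!]$ by the rule $v \mapsto (\omega_f(g) \otimes 1) v$ acting trivially on coefficients, modulo $v$ it acts trivially on $R[\![v]\!]/v \cong R$. Beyond this, the argument is formal and I expect no substantive obstacle.
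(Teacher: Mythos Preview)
Your proof is correct. The paper actually states this proposition without proof, treating it as routine; your argument supplies exactly the standard details (projectivity of $D^{(j)}_R$ by base change along $R[\![v]\!] \to R$, then the idempotent decomposition in $\La[\Delta]$ using that $|\Delta|=p^f-1$ is a unit and that $\La \supset W(k)$ contains the needed roots of unity), together with the observation that the $\Delta$-action becomes $R$-linear modulo $v$.
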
 

\begin{defn} Let $Y^{[a,b], \tau}$ be fpqc stack of Kisin modules with height in $[a,b]$ and descent datum of type $\tau$ over $\Nilp_{\La}$.
\end{defn}  

\begin{cor} The inclusion $Y^{[a,b], \tau} \subset Y^{[a,b], \Delta}$ is a relatively representable open and closed immersion. 
\end{cor}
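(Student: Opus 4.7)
The plan is to reduce the question to a condition on the ranks of the isotypic components of $D^{(j)}_R$, which by Proposition~\ref{consttype} are locally constant on $\Spec R$. To check relative representability, I take an arbitrary morphism $T = \Spec R \to Y^{[a,b],\Delta}$ corresponding to a Kisin module with descent datum $(\fM_R, \phi_{\fM_R}, \{\widehat{g}\})$ and show that the fiber product $Y^{[a,b],\tau} \times_{Y^{[a,b],\Delta}} T$ is an open and closed subscheme of $T$; both properties then propagate to the inclusion of stacks since open and closed immersions are local on the base for the fpqc topology.

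Write $\tau \cong \oplus_{\chi \in \Delta^*} \chi^{\oplus m_\chi}$, where $m_\chi \in \mathbb{Z}_{\geq 0}$ is the multiplicity of $\chi$ in $\tau$ and $\sum_\chi m_\chi = n$. For each pair $(j, \chi) \in \mathbb{Z}/f\mathbb{Z} \times \Delta^*$, Proposition~\ref{consttype} supplies a finite projective $R$-module $D^{(j)}_{R,\chi}$ whose rank function $r^{(j)}_\chi : \Spec R \to \mathbb{Z}_{\geq 0}$ is locally constant. The condition that $\fM_R$ has type $\tau$ is precisely the conjunction of the finitely many equalities $r^{(j)}_\chi \equiv m_\chi$, one for each $j$ and each $\chi$ with $m_\chi > 0$ or $r^{(j)}_\chi$ nonzero at some point. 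Each such equality cuts out a clopen subscheme of $\Spec R$, and an intersection of finitely many clopens is clopen; so the locus cut out is open and closed in $T$.

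The only nontrivial input is Proposition~\ref{consttype}, whose content is that the isotypic decomposition behaves well in families. This is guaranteed by the fact that $|\Delta| = p^f - 1$ is coprime to $p$ and hence a unit in any $R \in \Nilp_\Lambda$, which allows the usual Maschke-style central idempotents $e_\chi = \frac{1}{|\Delta|}\sum_{g \in \Delta}\chi(g)^{-1}\widehat g$ to split off the $\chi$-isotypic summand as a direct factor of the finite projective $R$-module $D^{(j)}_R$. Once this ingredient is in hand, the rest of the argument is purely formal, and I do not anticipate any genuine obstacle.
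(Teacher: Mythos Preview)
Your proposal is correct and follows the same approach as the paper's proof, which is the one-line observation that Proposition~\ref{consttype} makes the type Zariski locally constant. You have simply unpacked that sentence: writing out the rank functions $r^{(j)}_\chi$, noting that each equality $r^{(j)}_\chi = m_\chi$ cuts out a clopen subset, and intersecting over the finite index set $\mathbb{Z}/f\mathbb{Z}\times\Delta^*$. The additional paragraph recalling the Maschke idempotents is not needed for the corollary (it is the content of Proposition~\ref{consttype} itself), but it does no harm.
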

\begin{proof}
This follows from Proposition \ref{consttype} which says that the type of a Kisin module with descent is Zariski locally constant. 
\end{proof}

Define $Y^{[a,b], \tau}_r := Y^{[a,b], \tau} \times_{\La} \La/p^r \La$. In the next section, we will construct a smooth cover of $Y^{[a,b], \tau}_r$ and show that it is representable by an Artin stack of finite type (Theorem~\ref{PRdescent}).  We will also relate these moduli spaces of Kisin modules with descent datum to the local models from the previous section. 

First, we will need a few preliminaries. Recall that $\tau = \oplus_{i=1}^n \chi_i$.   We can write $\chi_i$ uniquely as
$$
\chi_i = (\sigma_0 \circ \omega_f)^{\mathbf{a}_i}
$$
where $\mathbf{a}_i = a_{i, 0} + a_{i,1} p + \ldots + a_{i, f-1} p^{f-1}$.    

\begin{defn} Let $\mathbf{a}_i$ be as above. For $j \in \Z/f\Z$ define
$$
\mathbf{a}_i^{(j)} = \sum_{k =0}^{f-1} a_{i, f- j + k} p^{k}
$$
where the subscript $f - j + k$ is taken modulo $f$.  
\end{defn} 

We have chosen a global ordering on the characters $\chi_1, \chi_2, \ldots, \chi_n$.  However, it will be useful to choose a possibly different ordering at each place $j \in \Z/f\Z$.  

\begin{defn} \label{orient} An \emph{orientation} of the type $\tau$ is a set of elements $(s_j \in S_n)_{j \in \Z/f\Z}$ such that
$$
\bf{a}_{s_j(1)}^{(j)} \leq  \bf{a}_{s_j(2)}^{(j)}  \leq \bf{a}_{s_j(3)}^{(j)} \leq \ldots \leq \bf{a}_{s_j(n)}^{(j)}.
$$
In other words, if we set $\lambda_{\bf{a}}^{(j)} := {(\bf{a}^{(j)}_i)} \in \Z^n$ thought of as a cocharacter of the diagonal torus of $\GL_n$, then $s_j$ is a permutation such that $s_j^{-1}(\lambda_{\bf{a}}^{(j)})$ is anti-dominant with respect to the upper triangular Borel subgroup. 
\end{defn}

\begin{rmk}
\begin{enumerate}  
\item If the characters $\chi_i$ are pairwise distinct, then there is a unique orientation for $\tau$.  
\item For a different choice of global ordering, the set of possible orientations changes by diagonal conjugation by $S_n$.  
\item One may also be interested in studying the case when $\tau$ is an inertial type over $\mathbb{Q}_p$ which does not correspond to a principal series type (in the sense of Bushnell and Kutzko) under the inertial local Langlands correspondence. (See Theorem of~\cite{ceggps} for a general statement of inertial local Langlands, originally due to Henniart in the $2$-dimensional case.) 

For such an inertial type $\tau$ over $\Qp$, one can consider the base change $\tau'$ to $\Q_{p^f}$  where the type corresponding to $\tau$ (in the sense of Bushnell and Kutzko) becomes a principal series representation. Then $\tau'$ decomposes as a direct sum of characters, so it is a type in the sense of Definition~\ref{Galoistype}. The orientations for $\tau'$ reflect what sort of type $\tau$ was (see Example~\ref{cuspidal} below). 
\end{enumerate}
\end{rmk}  

\begin{exam} \label{cuspidal} Here we give the example of $2$-dimensional principal series and cuspidal tame types over $\mathbb{Q}_p$. (These correspond to types in principal series, and respectively supercuspidal representations, of $GL_2(\mathbb{Q}_p)$ under inertial local Langlands.) Let $0 \leq a < b < p-1$.   Consider the two dimensional tame types over $\Qp$ given by $\tau_1 = \omega_1^a \oplus \omega_1^b$ and $\tau_2 = \Ind(\omega_2^{a + pb})$.   The base changes to $\Q_{p^2}$ are
$$
\tau_1' = \omega_2^{a + a p} \oplus \omega_2^{b + bp} \text{ and } \tau_2' = \omega_2^{a + pb} \oplus \omega_2^{b + ap}
$$
respectively.  The unique orientation for $\tau_1'$ is $(\Id, \Id)$, and the unique orientation for $\tau_2'$ is $(s, \Id)$ where $s$ is the non-trivial transposition in $S_2$. 
\end{exam}

Consider the map $R[\![u]\!] \ra R[\![v]\!]$ given by $u \mapsto v^{p^f-1}$.  If $\fM_R$ is a Kisin module over $R$ with descent datum, then for each $j$, $\fM_R^{(j)}$ considered as an $R[\![u]\!]$-module has a linear $\Delta$-action and so for any $\chi \in \Delta^*$, we can consider the submodules
$$
\fM_{R, \chi}^{(j)} = \{ m \in \fM_R^{(j)} \mid \widehat{g}(m) = \chi(g) m \}
$$
for all $g \in \Delta$.  Note that $\fM_{R}^{(j)}  = \oplus_{\chi \in  \Delta^*} \fM_{R, \chi}^{(j)}$ as $R[\![u]\!]$-modules, since the order of $\Delta$ is prime to $p$.   

Similarly, we can define 
$$
^{\phz} \fM_{R, \chi}^{(j)} := \{ m \in \phz^*(\fM^{(j)}_R) \mid  \widehat{g}(m) = \chi(g) m \}.
$$
Since the descent datum commutes with the Frobenius action, we get linear maps
$$
\phi^{(j-1)}_{R, \chi}:^{\phz} \fM_{R, \chi}^{(j-1)} \ra \fM_{R, \chi}^{(j)}.
$$

\begin{rmk}
The $\chi$-isotypic piece of $\phz^*(\fM^{(j)}_R)$ is not isomorphic to $\phz^*(\fM^{(j)}_{R, \chi})$. Thus, $\phi^{(j)}_{R}$ does not define a semilinear Frobenius from $\fM^{(j-1)}_{R,\chi}$ to $\fM^{(j)}_{R, \chi}$.  This is why we denote the $\chi$-isotypic component by $^{\phz} \fM^{(j)}_{R, \chi}$. 
\end{rmk}

\begin{prop} \label{isopieces} Let $\fM_R$ be a Kisin module over $R$ of rank $n$ with descent datum. For any $j \in \Z/f\Z$ and $\chi \in \Delta^*$ the modules $\fM^{(j)}_{R, \chi}$ are finite projective $R[\![u]\!]$-modules of rank $n$. Furthermore multiplication by $v$ on $\fM_R^{(j)}$ induces an injective $R[\![u]\!]$-module homomorphism
$$
\fM^{(j)}_{R, \chi} \xrightarrow{v} \fM^{(j)}_{R, (\sigma_j \circ \omega_f) \chi}.
$$
\end{prop}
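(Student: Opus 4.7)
The plan is to exploit the fact that $u=v^{p^f-1}$ is fixed by the descent datum: since $\widehat{g}(v)=(\sigma_j\circ\omega_f(g))v$ and $\omega_f(g)$ has order dividing $p^f-1$, one has $\widehat{g}(u)=u$. Consequently, although the $\Delta$-action on $\fM_R^{(j)}$ is only semilinear over $R[\![v]\!]$, it is \emph{linear} over $R[\![u]\!]$. Moreover, $|\Delta|=p^f-1$ is coprime to $p$ and hence a unit in $R$ (since $p$ is nilpotent in $R$), so the idempotents $e_\chi=|\Delta|^{-1}\sum_{g\in\Delta}\chi^{-1}(g)\widehat{g}$ realize each $\fM^{(j)}_{R,\chi}$ as an $R[\![u]\!]$-module direct summand of $\fM_R^{(j)}$.

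Next, I would observe that $R[\![v]\!]$ is free over $R[\![u]\!]$ of rank $p^f-1$, with basis $1,v,\ldots,v^{p^f-2}$, so the hypothesis that $\fM_R^{(j)}$ is finite projective of rank $n$ over $R[\![v]\!]$ immediately yields that $\fM_R^{(j)}$ is finite projective over $R[\![u]\!]$ of rank $n(p^f-1)$. Hence each direct summand $\fM^{(j)}_{R,\chi}$ is finite projective over $R[\![u]\!]$. A direct computation with $\widehat{g}(vm)=\widehat{g}(v)\widehat{g}(m)$ shows that if $m\in\fM^{(j)}_{R,\chi}$, then $vm\in\fM^{(j)}_{R,(\sigma_j\circ\omega_f)\chi}$, so multiplication by $v$ indeed lands in the required isotypic piece; it is injective because $v$ is a nonzerodivisor on the $R[\![v]\!]$-projective module $\fM_R^{(j)}$.

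To pin down the rank as exactly $n$, I would work on a connected component of $\Spec R$ (equivalently of $\Spec R[\![u]\!]$, since idempotents lift uniquely along the surjection $R[\![u]\!]\twoheadrightarrow R$), so that each $\fM^{(j)}_{R,\chi}$ has constant rank $r_\chi$. Since $\sigma_j\circ\omega_f$ generates $\Delta^*$, iterating the shift by $v$ cyclically permutes all $p^f-1$ isotypic pieces. After inverting $u$, the element $v$ becomes a unit in $R[\![v]\!][1/u]$, so these shifts become $R[\![u]\!][1/u]$-module isomorphisms $\fM^{(j)}_{R,\chi}[1/u]\cong\fM^{(j)}_{R,\chi'}[1/u]$ for all $\chi,\chi'\in\Delta^*$. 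Since $\Spec R[\![u]\!][1/u]$ is nonempty on each component and ranks of finite projective modules are locally constant, all $r_\chi$ coincide, and the identity $\sum_\chi r_\chi=n(p^f-1)$ forces $r_\chi=n$.

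The main obstacle, such as it is, is not conceptually deep: it is really the bookkeeping needed to keep the $R[\![v]\!]$- and $R[\![u]\!]$-module structures distinct and to notice that the $\Delta$-action becomes linear over the latter. Once this is in place, the argument is a standard combination of isotypic decomposition (using that $|\Delta|$ is invertible) with the cyclic shift by $v$, and the rank count follows from elementary considerations after inverting $u$.
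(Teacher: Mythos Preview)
Your proof is correct and follows essentially the same approach as the paper: finite projectivity comes from being a direct summand (via idempotents, since $|\Delta|\in R^\times$), the $v$-shift lands in the right isotypic piece by the semilinearity formula, and injectivity comes from $v$ being regular on a projective $R[\![v]\!]$-module. The paper's proof is terse, dismissing the rank as ``immediate'' without argument; your justification via inverting $u$ (so that $v$ becomes a unit and the cyclic $v$-shifts become isomorphisms, forcing all $r_\chi$ to agree) is a clean way to make this precise. An alternative the paper likely has in mind---and which appears explicitly in Proposition~\ref{filtbasis}---is to pass Zariski-locally to an eigenbasis $f_1,\ldots,f_n$ with $f_i\in\fM^{(j)}_{R,\chi_i}$, after which $\fM^{(j)}_{R,\chi}=\bigoplus_i v^{k_i}R[\![u]\!]\,f_i$ for suitable $0\le k_i\le p^f-2$, visibly free of rank $n$. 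Either argument is fine; yours has the slight advantage of not needing to construct the eigenbasis first.
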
 

\begin{proof} The module $\fM_{R,\chi}^{j}$ is finite projective $R[\![u]\!]$-module because it is a direct summand of the finite projective module $\fM^{j}_R$; this also implies that multiplication by $v$ on $\fM_R^{j}$ is injective.  By the discussion before Definition \ref{action}, multiplication by $v$ sends the $\chi$-isotypic piece of $\fM_R^j$ to the $(\sigma_j\circ\omega_f)\chi$-isotypic piece. The rank computation is immediate. 
\end{proof}

\begin{lemma} \label{vheight}  Let $\fM_R$  be a Kisin module with descent datum.  Let $E_j(u) := \sigma_j(E(u))$. For each $\chi \in \Delta^*$, the Frobenius on $\fM_R$ induces an isomorphism $\phi^{(j-1)}_{R, \chi}:^{\phz} \fM_{R, \chi}^{(j-1)}[1/E_j(u)] \ra \fM_{R, \chi}^{(j)}[1/E_j(u)]$  such that
$$
E_j(u)^{a}  \fM_{R, \chi}^{(j)} \supset \phi^{(j-1)}_{R, \chi} (^{\phz} \fM_{R, \chi}^{(j-1)}) \supset E_j(u)^{b} \fM_{R, \chi}^{(j)}
$$
whenever $\fM_R$ has $P(v)$-height in $[a,b]$. 
\end{lemma}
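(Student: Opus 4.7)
The plan is to restrict the given height condition first to each component $\fM^{(j)}_R$ of the $W\otimes_{\Zp}R$-decomposition, and then to each $\chi$-isotypic summand, exploiting that the descent datum commutes with Frobenius and fixes $u=v^{p^f-1}$.

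First, I would decompose along the idempotents of $W\otimes_{\Zp}R$. Under the identification $(W\otimes_{\Zp}R)[\![v]\!]\cong\bigoplus_j R[\![v]\!]$, the Frobenius $\phz$ shifts the index $j$ by one, so restricting $\phi_{\fM_R}$ to the $j$th component gives a map ${}^{\phz}\fM^{(j-1)}_R[1/E_j(u)]\to\fM^{(j)}_R[1/E_j(u)]$ which coincides with the map $\phi^{(j-1)}_R$ named in the statement. Since $E(u)\in\Z_p[u]$ is fixed by every embedding of $K_0$, the polynomial $P(v)=E(v^{p^f-1})$ becomes exactly $E_j(u)$ inside each component $R[\![u]\!]\subset R[\![v]\!]$. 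Projecting the given inclusions $P(v)^a\fM_R\supset\phi_{\fM_R}(\phz^*\fM_R)\supset P(v)^b\fM_R$ onto the $j$th component therefore yields
\[
E_j(u)^a\fM^{(j)}_R\;\supset\;\phi^{(j-1)}_R({}^{\phz}\fM^{(j-1)}_R)\;\supset\;E_j(u)^b\fM^{(j)}_R,
\]
together with an isomorphism after inverting $E_j(u)$.

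Next, I would pass to the $\chi$-isotypic piece. Because $\widehat{g}(v)=(\sigma_j\circ\omega_f(g))v$ and $\omega_f(g)$ is a $(p^f-1)$st root of unity, $\widehat{g}$ fixes $u$, so it acts $R[\![u]\!]$-linearly on $\fM^{(j)}_R$, and the decomposition $\fM^{(j)}_R=\bigoplus_{\chi\in\Delta^*}\fM^{(j)}_{R,\chi}$ is one of $R[\![u]\!]$-modules (similarly for ${}^{\phz}\fM^{(j-1)}_R$). Multiplication by $E_j(u)\in\La[u]$ is $\Delta$-invariant and therefore preserves each summand. The descent-datum axiom $\phz^*(\widehat{g})\circ\phi_{\fM_R}=\phi_{\fM_R}\circ\widehat{g}$ says that $\phi^{(j-1)}_R$ intertwines the $\widehat{g}$-actions on source and target, so it sends ${}^{\phz}\fM^{(j-1)}_{R,\chi}$ into $\fM^{(j)}_{R,\chi}[1/E_j(u)]$. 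Projecting the displayed inclusions onto the $\chi$-summand—an exact operation, since $|\Delta|=p^f-1$ is invertible in $R$—then gives the containments claimed in the lemma, and the isomorphism after inverting $E_j(u)$ survives for the same reason.

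The only point requiring care, as the paper warns just before the lemma, is that ${}^{\phz}\fM^{(j-1)}_{R,\chi}\ne\phz^*(\fM^{(j-1)}_{R,\chi})$ in general. What is used is only that both source and target $\chi$-pieces are defined by the same numerical condition on the $\widehat{g}$-eigenvalues, and that $\phi_{\fM_R}$ preserves this condition by the descent-datum axiom; after that, everything is bookkeeping, and I do not anticipate a real obstacle.
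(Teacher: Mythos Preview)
Your argument is correct and follows the same route as the paper's proof: the paper also reduces to the $\Delta$-equivariance of each $\phi^{(j-1)}_R$, uses $P(v)=E(v^{p^f-1})=E(u)$ to see that the height condition and isotypic decomposition are compatible, and then reads off the result on each $\chi$-piece. You have simply spelled out in more detail the two projections (onto the $j$th component and onto the $\chi$-isotypic summand) that the paper leaves implicit.
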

\begin{proof}
For each $j$, the map $\phi^{(j-1)}_R:\phz^*(\fM^{(j-1)}_R)[1/\sigma_j(P(v))] \cong \fM^{(j)}_R[1/\sigma_j(P(v))]$ is a $\Delta$-equivariant isomorphism.    Using that $P(v) = E(v^{p^f-1}) = E(u)$, we see that multiplication by $E(u)$ respects the decomposition into isotypic pieces.  The height condition is easy to verify.
\end{proof}

Choose an orientation $(s_j)$ for $\tau$ as in Definition \ref{orient}.  We then have the following commutative diagram for each $j$:
\begin{equation} \label{bigdiagram}
\xymatrix{
^{\phz} \fM^{(j-1)}_{R, \chi_{s_j(n)}} \ar[r] \ar[d]^{\phi^{(j-1)}_{R, \chi_{s_j(n)} }} & ^{\phz} \fM^{(j-1)}_{R, \chi_{s_j(1)}} \ar[r] \ar[d]^{\phi^{(j-1)}_{R, \chi_{s_j(1)}}} & ^{\phz} \fM^{(j-1)}_{R, \chi_{s_j(2)}} \ar[r] \ar[d]^{\phi^{(j-1)}_{R, \chi_{s_i(2)}}} & \cdots \ar[r] & ^{\phz} \fM^{(j-1)}_{R, \chi_{s_j(n-1)}} \ar[r] \ar[d]^{\phi^{(j-1)}_{R, \chi_{s_j(n-1)}}} & ^{\phz} \fM^{(j-1)}_{R, \chi_{s_j(n)}} \ar[d]^{\phi^{(j-1)}_{R, \chi_{s_j(n)}}} \\
\fM^{(j)}_{R, \chi_{s_j(n)}} \ar[r] & \fM^{(j)}_{R, \chi_{s_j(1)}} \ar[r] & \fM^{(j)}_{R, \chi_{s_j(2)}} \ar[r] & \cdots \ar[r] & \fM^{(j)}_{R, \chi_{s_j(n-1)}} \ar[r] & \fM^{(j)}_{R, \chi_{s_j(n)}}. \\
}
\end{equation}
All the maps in the diagram are injective.  The composition across each row is multiplication by $u$. The first horizontal arrow in each row is induced by multiplication $v^{p^f -1 - \bf{a}^{(j)}_{s_j(n)}  +  \bf{a}^{(j)}_{s_j(1)} }$ .  The other horizontal arrows are induced by  multiplication by $v^{\bf{a}^{(j)}_{s_j(k+1)}  -  \bf{a}^{(j)}_{s_j(k)}}$ for each $1 \leq k \leq n-1$.  If some of the $\{ \chi_i \}$ are equal, some of the maps will be the identity.  

The diagram should remind one of the diagrams that appear in the classical definition of local models for $\GL_n$ with parahoric level structure, which involve lattice chains (see~\cite{PR2} as well as Section 2 of~\cite{PRS}).   Once we have chosen an appropriate trivialization of $ \fM^{(j)}_{R, \chi_{s_j(n)}}$ in the next section the above diagram will determine an $R$-point of an appropriate local model. 


\section{Smooth modification}\label{smooth modification}

We maintain the conventions from the previous section.  In particular, we fix an ordering $\{ \chi_i \}_{i=1}^n$ of the characters appearing in $\tau$. We would like to package the data of diagram (\ref{bigdiagram}) in a different way so that the relationship to the local models from \S 2 becomes clearer. If $D$ is an $R$-module, then by a filtration on $D$, we always mean by submodules which are direct summands of $D$.  We will work with increasing filtrations.  

We continue to work over the category $\Nilp_{\La}$ of $\La$-algebra on which $p$ is nilpotent. We make the following definition:
\begin{defn} Let $X, X'$ be fpqc stacks on $\Nilp_{\La}$.  A morphism $f:X \ra X'$ is \emph{smooth} if $f \mod p^N$ is smooth for all $N \geq 1$. 
\end{defn}


\begin{defn} \label{eigenbasis} Let $\fM_R \in Y^{[a,b], \tau}(R)$.   An \emph{eigenbasis} for $\fM_R$ is a collection of bases $\beta^{(j)} = \left \{f_1^{(j)}, f_2^{(j)}, \ldots, f_n^{(j)} \right \}$  for each $\fM_R^{(j)}$ such that $f_i^{(j)} \subset \fM_{R, \chi_i}^{(j)}$.  An eigenbasis modulo $P(v)^N$ is a collection of bases $\left \{ \beta^{(j)}_N \right \}_{j \in \Z/f\Z}$ for each $\fM^{(j)}_R/ \sigma_j(P(v))^N \fM^{(j)}_R$ compatible, as above, with the descent datum. 
\end{defn} 

An eigenbasis exists whenever $D_R^{(j)}$ is free over $R$ since one can lift a basis for $D_R^{(j)}$ to $\fM_R^{(j)}$.  In particular, such a basis exists Zariski locally on $\Spec R$ for any $\fM_R \in Y^{[a,b], \tau}(R)$.   
 
\begin{defn} Fix $N > b- a$. Let $\widetilde{Y}^{[a,b], \tau}$ be the fpqc stack over $\Nilp_{\La}$ given by 
$$
\widetilde{Y}^{[a,b], \tau}(R) := \left \{ \left (\fM_R, \beta^{(j)}_N \right) \mid \fM_R \in Y^{[a,b], \tau}(R), \beta^{(j)}_N:\fM^{(j)}_{R} \cong  R[\![v]\!]^n \mod \sigma_j(P(v))^N \right \}
$$
where $(\beta^{(j)}_N)$ is an eigenbasis. We also have an infinite version given by 
$$
\widetilde{Y}^{[a,b], \tau, (\infty)}(R) := \left \{ \left(\fM_R, \beta^{(j)} \right) \mid \fM_R \in Y^{[a,b], \tau}(R), \beta^{(j)}:\fM_R^{(j)} \cong  R[\![v]\!]^n \right \}
$$
where $(\beta^{(j)})$ is an eigenbasis. 
\end{defn} 

We leave out $N$ from the notation $\widetilde{Y}^{[a,b],\tau}$, though of course the stack does depend on $N$. See Proposition~\ref{smoothcovers} below for a precise statement.

\begin{prop} \label{filtbasis}  Let $\fM_R \in Y^{[a,b], \tau}(R)$.  An eigenbasis $\left \{ \beta^{(j)} \right \}$ for $\fM_R$ induces a trivialization $\fM^{(j)}_{R, \chi} \cong R[\![u]\!]^n$ for any $\chi \in \Delta^*$.  In particular, we have
$$
\gamma^{(j)}: \fM^{(j)}_{R, \chi_{s_j(n)}} \cong R[\![u]\!]^n.
$$   
Similarly, an eigenbasis modulo $P(v)^N$ induces trivializations of $\fM^{(j)}_{R, \chi}$ modulo $E(u)^N$.  
\end{prop}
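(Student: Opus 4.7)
The plan is to exhibit an explicit $R[\![u]\!]$-basis of $\fM^{(j)}_{R,\chi}$ built out of the eigenbasis vectors and suitable powers of $v$.

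First I would use the fact that $u = v^{p^f-1}$ to see that $R[\![v]\!]$ is free over $R[\![u]\!]$ with basis $\{1, v, v^2, \ldots, v^{p^f-2}\}$. Since $\fM^{(j)}_R$ is free over $R[\![v]\!]$ on the eigenbasis $\beta^{(j)} = \{f_1^{(j)}, \ldots, f_n^{(j)}\}$, it is free over $R[\![u]\!]$ with basis
\[
\left\{\, v^k f_i^{(j)} : 1 \leq i \leq n,\ 0 \leq k \leq p^f - 2 \,\right\}.
\]

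Next I would track the $\Delta$-action on each of these basis vectors. Since $f_i^{(j)} \in \fM^{(j)}_{R,\chi_i}$ and $\widehat{g}$ acts on $v$ by multiplication by $\sigma_j \circ \omega_f(g)$, one computes
\[
\widehat{g}(v^k f_i^{(j)}) = \bigl((\sigma_j \circ \omega_f)^k \chi_i\bigr)(g) \cdot v^k f_i^{(j)},
\]
so $v^k f_i^{(j)}$ lies in the isotypic component indexed by $(\sigma_j \circ \omega_f)^k \chi_i$. Since $|\Delta| = p^f - 1$ is prime to $p$, the isotypic decomposition $\fM^{(j)}_R = \bigoplus_{\chi \in \Delta^*} \fM^{(j)}_{R,\chi}$ is a direct sum of $R[\![u]\!]$-submodules, and our $R[\![u]\!]$-basis above is compatible with this decomposition: for each $\chi$, the subset of basis elements landing in $\fM^{(j)}_{R,\chi}$ is an $R[\![u]\!]$-basis of that summand.

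The last ingredient is that $\sigma_j \circ \omega_f$ generates the cyclic group $\Delta^*$ of order $p^f - 1$, because it is the $p^{f-j}$-th power of the generator $\sigma_0 \circ \omega_f$ and $\gcd(p, p^f-1) = 1$. Hence for every $\chi \in \Delta^*$ and every $i \in \{1, \ldots, n\}$ there exists a unique $k_i(\chi) \in \{0, 1, \ldots, p^f - 2\}$ with $(\sigma_j \circ \omega_f)^{k_i(\chi)} \chi_i = \chi$. Combining this with the previous step, $\{v^{k_i(\chi)} f_i^{(j)}\}_{i=1}^n$ is an $R[\![u]\!]$-basis of $\fM^{(j)}_{R,\chi}$, giving the desired trivialization; specializing to $\chi = \chi_{s_j(n)}$ defines $\gamma^{(j)}$. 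The modulo $P(v)^N$ version is immediate once one observes $P(v) = E(u)$, so reducing on the $v$-side mod $P(v)^N$ is the same as reducing on the $u$-side mod $E(u)^N$, and the argument above descends verbatim.

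There is no real obstacle here; the only subtlety is the bookkeeping in step two that identifies the $\chi$-isotypic part of $v^k f_i^{(j)}$, which is a direct consequence of Definition~\ref{action} and Proposition~\ref{isopieces}.
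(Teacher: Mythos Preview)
Your proof is correct and follows essentially the same approach as the paper's: both obtain the explicit $R[\![u]\!]$-basis $\{v^{k_i(\chi)} f_i^{(j)}\}_{i=1}^n$ of $\fM^{(j)}_{R,\chi}$ by shifting each eigenvector into the $\chi$-isotypic component via the appropriate power of $v$. The paper phrases this more compactly via the $\Delta$-equivariant trivialization $\fM^{(j)}_R \cong R[\![v]\!]\otimes_\Lambda \tau$ and then reading off the $\chi$-isotypic piece, but the explicit basis it writes down (e.g.\ $v^{\mathbf{a}^{(j)}_{s_j(n)}-\mathbf{a}^{(j)}_{s_j(i)}}f^{(j)}_{s_j(i)}$ for $\chi=\chi_{s_j(n)}$) is exactly the one your argument produces.
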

\begin{proof} An eigenbasis for $\fM_R$ induces a $\Delta$-equivariant trivialization $$\fM^{(j)}_R \cong R[\![v]\!]f^{(j)}_1 \oplus \dots \oplus R[\![v]\!]f^{(j)}_n \cong R[\![v]\!] \otimes_{\La} \tau. $$ We can identify the $\chi$-isotypic component on the right side and see that it is naturally isomorphic to $R[\![u]\!]^n$. To get the explicit basis for the $\chi$-isotypic component, translate the elements of eigenbasis into the $\chi$-isotypic component by multiplying by the smallest non-negative power of $v$ which is compatible with the descent datum. For example, for $\chi_{s_j(n)}$, the basis $\gamma^{(j)}$ will be given by $v^{\bf{a}^{(j)}_{s_j(n)}-\bf{a}^{(j)}_{s_j(1)}}\cdot f^{(j)}_{s_j(1)},\dots, v^{\bf{a}^{(j)}_{s_j(n)}-\bf{a}^{(j)}_{s_j(n-1)}}\cdot f^{(j)}_{s_j(n-1)}, f^{(j)}_{s_j(n)}$. 
\end{proof}

Let $(s_j)_{j \in \Z/f\Z}$ be an orientation of $\tau$ (Definition \ref{orient}).  Furthermore, define a filtration on $\La^n := \tau$ by 
$$
\Fil^k(\La^n) = \sum_{1 \leq i \leq k} (\La^n)_{\chi_{s_j(i)}}.
$$
Let $P_j \subset \GL_n$ be the parabolic which is the stabilizer of $\{ \Fil^k(\La^n) \}$.  For example, if all the characters are distinct then $P_j$ is a Borel subgroup for all $j \in \Z/f\Z$.

\begin{rmk} In Definition \ref{orient}, we define $\lambda^{(j)}_{\bf{a}} \in \Z^n$ which express the characters of $\tau$ in terms of the fundamental character $\omega_f$ in embedding $\sigma_j$.  An orientation is a collection of permutations $(s_j)$ such that $s_j^{-1} (\lambda^{(j)}_{\bf{a}})$ is anti-dominant.  Then $P_j$ is the unique parabolic subgroup containing the diagonal torus and the root groups $U_{\alpha}$ for all $\alpha$ satisfying $\langle \alpha, s_j^{-1} (\lambda^{(j)}_{\bf{a}}) \rangle \leq 0$. In particular, $P_j$ contains the upper triangular Borel $B$. 
\end{rmk}    

Recall the group schemes $L^{+, E_j(u)} \cP_j$ and $\cP_{j,r}$ defined before Proposition \ref{parahoricgroups} with $P_j$ the parabolic as above.  When $p$ is nilpotent in $R$, the $E_j(u)$-adic completion and $u$-adic completions of $R[u]$ coincide and so 
\begin{equation} \label{loopgroup} 
L^{+, E_j(u)} \cP_j (R) = \left \{ g \in GL_n(R[\![u]\!]) \mid g \mod u \in P_j(R) \right \}.
\end{equation}

\begin{prop} \label{smoothcovers} The map $\pi^{(\infty)}:\widetilde{Y}^{[a,b], \tau, (\infty)} \ra Y^{[a,b],  \tau}$ $($resp. $\pi^{(N)}:\widetilde{Y}^{[a,b], \tau} \ra Y^{[a,b],  \tau})$  is a torsor $($for the Zariski topology$)$ for  $\prod_{j \in \Z/f\Z} L^{+, E_j(u)} \cP_j$ $($resp. $\prod_{j \in \Z/f\Z} \cP_{j, N})$.  In particular, $\pi^{(N)}$ is smooth and $\pi^{(\infty)}$ is formally smooth.  
\end{prop}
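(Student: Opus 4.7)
The strategy is to exhibit the group of changes of eigenbasis as precisely $\prod_{j}L^{+, E_j(u)}\cP_j$ (or its finite-level analogue $\prod_{j}\cP_{j,N}$), show that eigenbases exist Zariski-locally on the base, and deduce the (formal) smoothness from Proposition~\ref{parahoricgroups}. Local existence is quick: for $\fM_R \in Y^{[a,b],\tau}(R)$ the isotypic piece $D^{(j)}_R$ is free of rank $n$ by Definition~\ref{Galoistype} and its direct summands $D^{(j)}_{R,\chi}$ are finite projective by Proposition~\ref{consttype}; so after Zariski localization on $\Spec R$ we lift a character-homogeneous basis of $D^{(j)}_R$ to an eigenbasis of $\fM^{(j)}_R$ (using Proposition~\ref{isopieces}). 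Reducing modulo $\sigma_j(P(v))^N$ handles $\pi^{(N)}$.

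Now fix an eigenbasis $\beta^{(j)}$ identifying $\fM^{(j)}_R$ with $R[\![v]\!] \otimes_\La \tau$. A second eigenbasis differs from $\beta^{(j)}$ by a $\Delta$-equivariant $R[\![v]\!]$-linear automorphism $g = (g_{li}) \in \GL_n(R[\![v]\!])$; the commutation relation $\widehat{h}\circ g = g\circ \widehat{h}$ forces $g_{li}$ into the $\chi_i\chi_l^{-1}$-isotypic subspace $v^{m_{li}} R[\![u]\!] \subset R[\![v]\!]$, where $m_{li}$ is the least non-negative integer congruent to $\mathbf{a}_i^{(j)} - \mathbf{a}_l^{(j)}$ modulo $p^f-1$. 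Transport $g$ via the basis $\gamma^{(j)}$ of Proposition~\ref{filtbasis}: writing $d_k = \mathbf{a}_{s_j(n)}^{(j)} - \mathbf{a}_{s_j(k)}^{(j)}$, the induced matrix on $\fM^{(j)}_{R,\chi_{s_j(n)}} \cong R[\![u]\!]^n$ is $G_{mk} = v^{d_k - d_m}\, g_{s_j(m), s_j(k)}$. A short case check shows $G_{mk} \in R[\![u]\!]$ and that $G_{mk} \equiv 0 \pmod u$ exactly when $k < m$ and $\chi_{s_j(k)} \neq \chi_{s_j(m)}$; but this is precisely the condition for $G \bmod u$ to lie in $P_j$ (the stabilizer of the flag indexed by the orientation $s_j$, with block sizes given by multiplicities of characters).

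Conversely, any $G \in L^{+, E_j(u)}\cP_j(R)$ comes from a unique $g$ via $g_{s_j(m), s_j(k)} = v^{d_m - d_k} G_{mk}$; in the case $m > k$ with $\chi_{s_j(k)} \neq \chi_{s_j(m)}$, the parabolic condition $G \bmod u \in P_j$ guarantees that $v^{d_m - d_k} G_{mk}$ really lies in $R[\![v]\!]$. Taking the product over $j \in \Z/f\Z$ identifies the sheaf of changes of eigenbasis with $\prod_j L^{+, E_j(u)}\cP_j$, making $\pi^{(\infty)}$ a Zariski torsor; reducing the same identification modulo $E_j(u)^N = \sigma_j(P(v))^N$ does the same for $\pi^{(N)}$ with group $\prod_j \cP_{j,N}$. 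Formal smoothness of $\pi^{(\infty)}$ and smoothness of $\pi^{(N)}$ then follow from Proposition~\ref{parahoricgroups}. The main obstacle is the combinatorial bookkeeping in the case check, which requires juggling three overlapping layers of structure at once: the isotypic decomposition under $\Delta$, the reordering by the orientation $s_j$, and the substitution $u = v^{p^f-1}$.
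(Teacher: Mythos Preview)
Your proof is correct and follows essentially the same route as the paper: first observe that eigenbases exist Zariski-locally (by lifting a character-homogeneous basis of $D^{(j)}_R$), then identify the automorphism group of eigenbases with $\prod_j L^{+,E_j(u)}\cP_j$ by conjugating the change-of-basis matrix in $\GL_n(R[\![v]\!])$ by the diagonal matrix of $v$-powers $D_{(\mathbf{a}^{(j)}_{s_j})}$ and checking that the result lies in $\GL_n(R[\![u]\!])$ with reduction mod $u$ in $P_j$. Your transport via $\gamma^{(j)}$ is exactly the paper's conjugation $A^{(j)} = D^{-1}_{(\mathbf{a}^{(j)}_{s_j})} B^{(j)} D_{(\mathbf{a}^{(j)}_{s_j})}$, and the case analysis you describe is the same computation the paper does on the sub-diagonal entries.
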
 
\begin{proof}
We observed after Definition \ref{eigenbasis} that an eigenbasis (resp. eigenbasis mod $P(v)^N$) always exists Zariski locally on $\Spec R$.   We focus on the case of $\pi^{(\infty)}$ since the other case is similar.  We want to show that for a given module $\fM_R$ with descent datum of type $\tau$ the set of eigenbases at an embedding $\sigma_j$ is a torsor for $L^{+, E_j(u)} \cP_j(R)$. 

An eigenbasis $\beta^{(j)}:\fM^{(j)} \cong R[\![v]\!]^n$ induces, by taking $\Delta$-invariants, a trivialization $\beta^{(j), \Delta}:\fM^{(j), \Delta = 1} \cong R[\![u]\!]^n$. Thus, for any two eigenbases $\beta^{(j)}$ and $\beta^{' (j)}$, there is 
$
B^{(j)} \in \GL_n(R[\![v]\!]) \text{ such that } \beta^{'(j)}=B^{(j)}\beta^{(j)}, \text{ and } A^{(j)} \in \GL_n(R[\![u]\!]) \text{ such that } \beta^{'(j), \Delta}=A^{(j)}\beta^{(j), \Delta}.
$ 
Concretely, let $\lambda_{\bf{a}}^{(j)} = ({\bf{a}}^{(j)}_i)_i \in \Z^n$ as in Definition \ref{orient}.  Then the relationship between $B^{(j)}$ and $A^{(j)}$ is that 
$$
A^{(j)} = \left(v^{s_j^{-1}(\lambda_{\bf{a}}^{(j)})} \right)^{-1} B^{(j)} \left( v^{s_j^{-1}(\lambda_{\bf{a}}^{(j)})} \right)
$$
where $v^{s_j^{-1}(\lambda_{\bf{a}}^{(j)})}$ is the diagonal matrix with the $(i, i)$th entry given by $v^{\mathbf{a}^{(j)}_{s_j(i)}}$. 

The claim is that  \[\left(v^{s_j^{-1}(\lambda_{\bf{a}}^{(j)})} \right)^{-1}  \GL_n(R[\![v]\!]) \left( v^{s_j^{-1}(\lambda_{\bf{a}}^{(j)})} \right) \cap \GL_n(R[\![u]\!]) = L^{+, E_j(u)} \cP_j(R).\] This can be checked by a direct computation with root groups which we include below.



For the entries below the diagonal, we have
$$
A^{(j)}_{mk} = v^{\mathbf{a}^{(j)}_{s_j(m)} - \mathbf{a}^{(j)}_{{s_j}(k)}} B^{(j)}_{mk}
$$
for $m > k$ and with our choice of ordering $\mathbf{a}^{(j)}_{s_j(m)} -\mathbf{a}^{(j)}_{s_j(k)} \geq 0$ with equality if and only if $\chi_{s_j(m)} = \chi_{s_j(k)}$.  Thus, whenever $\chi_{s_j(m)} \neq \chi_{s_j(k)}$, we see that $A^{(j)}_{mk} \mod u = 0$.  This is exactly the condition $A^{(j)} \mod u \in P_j(R)$.  The converse is also true.  
\end{proof}

  
\begin{thm} \label{PRdescent} For any $r \geq 1$, $Y^{[a,b], \tau}_{r} := Y^{[a,b], \tau} \otimes_{\La} \La/p^r \La$ is representable by an Artin stack of finite type over $\Spec \La/p^r \La$.  Furthermore,  $\widetilde{Y}^{[a,b], \tau}_{r} := \widetilde{Y}^{[a,b], \tau} \otimes_{\La} \La/p^r \La$ is represented by a scheme of finite type over $\Spec \La/p^r \La$. 
\end{thm}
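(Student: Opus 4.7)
The plan is to first show that $\widetilde{Y}^{[a,b], \tau}_r$ is representable by a scheme of finite type over $\Spec \La/p^r \La$, and then apply Proposition~\ref{smoothcovers} to deduce the statement about $Y^{[a,b], \tau}_r$. The strategy is to compare $\widetilde{Y}^{[a,b], \tau}_r$ with the space $\widetilde{X}^{[a,b]}_r$ from Theorem~\ref{reprofspace}, which is already known to be a scheme of finite type.

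First, I would construct a natural forgetful morphism
$$
\widetilde{Y}^{[a,b], \tau}_r \longrightarrow \widetilde{X}^{[a,b]}_r
$$
sending a pair $(\fM_R, \{\beta^{(j)}_N\})$ to $(\fM_R, \alpha_R)$, where $\alpha_R$ is the trivialization modulo $P(v)^N$ obtained by viewing each eigenbasis $\beta^{(j)}_N$ as an ordinary $R[v]/\sigma_j(P(v))^N$-basis of $\fM^{(j)}_R/\sigma_j(P(v))^N$, via the decomposition $(W \otimes R)[\![v]\!] \cong \prod_j R[\![v]\!]$. The key claim is that this morphism is a locally closed immersion; granting this, Theorem~\ref{reprofspace} immediately gives that $\widetilde{Y}^{[a,b], \tau}_r$ is a scheme of finite type over $\Spec \La/p^r \La$.

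To verify the claim, the main observation is that, given a point $(\fM_R, \alpha_R)$ of $\widetilde{X}^{[a,b]}_r$, the fiber of the forgetful map parameterizes descent data on $\fM_R$ of type $\tau$ for which the standard basis induced by $\alpha_R$ is an eigenbasis. Modulo $P(v)^N$, such a descent datum is forced by the type condition to act as $\widehat{g}(e^{(j)}_i) = \chi_i(g) \cdot e^{(j)}_i$ on the standard basis. The lift from mod $P(v)^N$ to all of $\fM_R$ is then unique when it exists, by a standard Frobenius-iteration argument using the commutation relation $\phz^*(\widehat{g}) \circ \phi_{\fM_R} = \phi_{\fM_R} \circ \widehat{g}$: Frobenius pullback multiplies $P(v)$-adic divisibility while $\phi_{\fM_R}^{-1}$ loses at most $P(v)^{b-a}$ per step, so any two lifts agreeing mod $P(v)^N$ must coincide once $N$ is taken large enough (relative to $b-a$ and $p$). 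Existence of the lift is cut out by closed conditions (well-definedness of the extended operators on $\fM_R$, commutation with $\phi_{\fM_R}$, and the cocycle relation for $\Delta$), so the forgetful map realizes $\widetilde{Y}^{[a,b], \tau}_r$ as a locally closed subscheme of $\widetilde{X}^{[a,b]}_r$.

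Finally, Proposition~\ref{smoothcovers} exhibits $\pi^{(N)}: \widetilde{Y}^{[a,b], \tau}_r \to Y^{[a,b], \tau}_r$ as a Zariski-locally trivial torsor for the smooth affine group scheme $\prod_{j \in \Z/f\Z} \cP_{j,N}$, which is of finite type over $\La$ by Proposition~\ref{parahoricgroups}. This presents $Y^{[a,b], \tau}_r$ as the quotient stack $[\widetilde{Y}^{[a,b], \tau}_r / \prod_{j} \cP_{j,N}]$, which is an Artin stack of finite type. The main obstacle will be verifying the locally closed immersion: pinning down the precise Frobenius-iteration bound on $N$ needed for uniqueness of the descent-data lift, and checking that its existence is encoded by a locally closed condition on $\widetilde{X}^{[a,b]}_r$.
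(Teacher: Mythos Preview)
Your overall strategy matches the paper's: construct the forgetful map $\widetilde{Y}^{[a,b],\tau}_r \to \widetilde{X}^{[a,b]}_r$, show it is relatively representable and of finite type, and then use the smooth cover from Proposition~\ref{smoothcovers} to conclude that $Y^{[a,b],\tau}_r$ is an Artin stack of finite type. The difference lies in how relative representability is established. The paper does not claim the forgetful map is a locally closed immersion; instead it observes that each $\widehat{g}$ induces an $R(\!(u)\!)$-linear automorphism of the \'etale $\varphi$-module $\fM_R[1/v]$ commuting with Frobenius, and invokes \cite[Theorem~2.5(b)]{PRcoeff} to see that such automorphisms are parametrized by a finite-type $R$-scheme. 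The remaining conditions (semilinearity, the cocycle relation, preservation of the lattice $\fM_R$, and compatibility with $\beta_R$) are then imposed as closed conditions.

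Your direct argument is in effect an attempt to reprove the relevant content of \cite[Theorem~2.5(b)]{PRcoeff} by hand. The idea is sound, but one step needs correction: Frobenius pullback does \emph{not} literally multiply $P(v)$-adic divisibility when $r>1$, since $\phi(P(v)) = P(v^p)$ and this equals $P(v)^p$ only modulo~$p$. The fix is to work $v$-adically instead: because $p$ is nilpotent in $R$, the $P(v)$-adic and $v$-adic topologies on $R[\![v]\!]$ coincide (cf.\ the remark before~(\ref{loopgroup})), and $\phi$ genuinely raises $v$-adic valuation by a factor of~$p$, so the iteration goes through. Your existence claim (``cut out by closed conditions'') also deserves a sentence of justification: once $\widehat{g}$ is defined on the eigenbasis via the $\chi_i$ and extended semilinearly, commutation with $\phi_{\fM_R}$ amounts to the vanishing of finitely many elements of $R[\![v]\!]$, which is indeed closed on $\Spec R$. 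With these adjustments your route works and gives the slightly sharper statement that the forgetful map is a closed immersion; the paper's citation is shorter and sidesteps these details.
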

\begin{proof}
It suffices to prove the second statement, for which we will use a strategy originally employed in~\cite{CEGS}. Consider the map
$$
\xi:\widetilde{Y}^{[a,b], \tau}_{r} \to  \widetilde{X}^{[a,b]}_{r}
$$
given by forgetting the descent datum.  It suffices to show that $\xi$ is relatively representable and finite type by Theorem \ref{reprofspace}. 

Given $(\fM_R, \phi_R,\beta_R)\in \widetilde{X}^{[a,b]}_r(R)$ we see that the data of the additive bijections $\widehat{g}:\fM_R\to \fM_R$ for all $g\in \Delta$, which have to commute with $\phi_R$, satisfy $\widehat{g_1\circ g_2}=\widehat{g_1}\circ\widehat{g_2}$, be $R[\![v]\!]$-semilinear and compatible with $\beta_R$ is representable by a scheme of finite type over $R$. Indeed, such a bijection $\widehat{g}$ has to induce an $R(\!(u)\!)$-linear automorphism of $\fM_R[1/v]$ (which can be thought of as an \'etale $\varphi$-module over $R$ of rank $n\cdot (p^f-1)$). By the proof of Theorem 2.5(b) of~\cite{PRcoeff}, the data of an $R(\!(u)\!)$-linear automorphism of $\fM_R[1/v]$ which commutes with $\phi_R$ is representable by a scheme of finite type over $R$. Further imposing the relationships $\widehat{g_1\circ g_2}=\widehat{g_1}\circ\widehat{g_2}$ and the $R[\![v]\!]$-semilinearity cuts out a closed subscheme. Finally, the requirement that the descent datum preserve the lattice $\fM_R\subset \fM_R[1/v]$ and the compatibility with $\beta_R$ are also closed conditions.

We conclude then that $\widetilde{Y}^{[a,b], \tau}_{r} \ra \widetilde{X}^{[a,b]}_r$ is relatively representable and finite type and so by Theorem \ref{reprofspace} $\widetilde{Y}^{[a,b], \tau}_{r}$  is a scheme of finite type over $\Spec \La/p^r \La$.   Since $\widetilde{Y}^{[a,b], \tau}_{r} \ra Y^{[a,b], \tau}_{r}$ is a smooth cover we deduce that $Y^{[a,b], \tau}_{r}$ is an Artin stack of finite type.  
\end{proof} 


We are now ready to construct the local model diagram for Kisin modules with descent data:
$$
\xymatrix{
& \widetilde{Y}^{[0,h], \tau, (\infty)} \ar[dl]_{\pi^{(\infty)}} \ar[dr]^{\Psi} & \\
Y^{[0,h], \tau} & & \Fl_{K}^{E(u)}. \\
}
$$

\noindent To define $\Psi$, we need to associate to any $(\fM_R, \phi_{R}, \{\widehat{g}\}, \beta_R) \in \widetilde{Y}^{[a,b], \tau, (\infty)}(R)$ and each embedding $\sigma_j$, a triple $(L^{(j)}, \alpha^{(j)}, \eps^{(j)}) \in \Fl^{E_j(u)}_{P_j}(R)$.   The pair $(L^{(j)}, \alpha^{(j)})$ is straightforward to define and is given by the `image' of Frobenius.  
 
 To be precise, we take $L^{(j)} = ^{\phz} \fM^{(j-1)}_{R, \chi_{s_j(n)}}$ and define the trivialization $\alpha^{(j)}$ by the composition 
 \begin{equation} \label{a1}
 ^{\phz} \fM^{(j-1)}_{R, \chi_{s_j(n)}}[1/E_j(u)] \xrightarrow{\phi^{(j-1)}_{R, \chi_{s_j(n)}}} \fM^{(j)}_{R, \chi_{s_j(n)}}[1/E_j(u)] \xrightarrow{\gamma^{(j)}} (R[\![u]\!]^n)[1/E_j(u)]
 \end{equation}
 where $\gamma^{(j)}$ is induced by $\beta^{(j)}$ as in Proposition \ref{filtbasis}.  Notice that we are using the alternative description of $\Fl_{P_j}^{E_j(u)}$ from Definition \ref{alt}.  
 
 Next, we have to define a filtration $\eps^{(j)}$ on $L^{(j)} \mod u$.   Let 
 $$
 ^{\phz} D^{(j-1)}_{\chi_{s_j(n)}} := ^{\phz} \fM^{(j-1)}_{R, \chi_{s_j(n)}} \mod u = L^{(j)} \mod u.
 $$  
 The filtration is essentially given by the diagram (\ref{bigdiagram}).   Namely for each $1 \leq i \leq n$, let 
 $$
 \omega_i: ^{\phz} \fM^{(j-1)}_{R, \chi_{s_j(i)}} \ra ^{\phz} \fM^{(j-1)}_{R, \chi_{s_j(n)}}
 $$ 
 be the injective map induced by composition along the upper row of (\ref{bigdiagram}).  Then we get the inclusions
 $$
 u \left(^{\phz} \fM^{(j-1)}_{R, \chi_{s_j(n)}} \right) \subset \omega_1 \left(^{\phz} \fM^{(j-1)}_{R, \chi_{s_j(1)}} \right) \subset \ldots \omega_{n-1} \left( ^{\phz} \fM^{(j-1)}_{R, \chi_{s_j(n-1)}} \right) \subset ^{\phz} \fM^{(j-1)}_{R, \chi_{s_j(n)}}.
 $$
We can then define the filtration $\eps^{(j)}$ by  
\begin{equation} \label{descfilt}
\Fil^i  \left({}^{\phz} D^{(j-1)}_{\chi_{s_j(n)}} \right) = \omega_i \left( ^{\phz} \fM^{(j-1)}_{R, \chi_{s_j(i)}} \right) /  u \left (^{\phz} \fM^{(j-1)}_{R, \chi_{s_j(n)}} \right).
\end{equation}
It is not hard to see that the filtration $\eps^{(j)}$  is a  $P_j$-filtration for $P_j$ defined after Proposition \ref{filtbasis}.  

In summary, we have
$$\Psi (\fM_R, \phi_{R}, \{ \widehat{g} \}, \beta^{(j)}) = \left(^{\phz} \fM^{(j-1)}_{R, \chi_{s_j(n)}}, \gamma^{(j)} \circ \phi^{(j -1)}_{R, \chi_{s_j(n)}}, \left\{ \mathrm{Fil}^i \left({}^{\phz} D^{(j-1)}_{\chi_{s_j(n)}}\right) \right\}_{i=1}^{n} \right)_{j \in \Z/f\Z}.
$$

\begin{exam} 
Let $K=K_0=\Q_p$ (so $f=1$) and $L$ be the ramified extension of $\Qp$ of degree $p-1$, such that $E(u)=u+p$ and $P(v)=v^{p-1}+p$. Let $0\leq a<b<p-1$ and consider the two dimensional tame type over $\Qp$ given by $\tau_1=\omega_1^a\oplus \omega_1^b$. This is a continuation of Example~\ref{cuspidal}.

We define $\fM_{\F}$ to be the rank $2$ Kisin module over $\F$ given by $\fM_{\F}=e_1\mathbb{F}[\![v]\!]\oplus e_2\mathbb{F}[\![v]\!]$ with $\phi$ given by:
$$
\phi(e_1)=e_1, \phi(e_2)=v^{p-1}e_2$$ 
and with descent datum $\tau_1$ given by:
$$
g(e_1)=\omega_1^a(g)e_1, g(e_2)=\omega_1^b(g)e_2
$$
for every $g\in \mathrm{Gal}(L/\Qp)$. 

Then $(e_1,e_2)$ is an eigenbasis for $\fM_\F$ and induces the bases $(e_1,v^{p-1+a-b}e_2)$ of $^{\phz} \fM_{\F,\omega_1^a}$ and $(v^{b-a}e_1,e_2)$ of $^{\phz} \fM_{\F,\omega_1^b}$ as $\F[\![u]\!]$-modules, as in Proposition~\ref{filtbasis}. The matrix of the trivialization of $^{\phz} \fM_{\F,\omega_1^b}$ is $\left(\begin{smallmatrix}1&0\\0&u\end{smallmatrix}\right)$. The map $\omega_1:^{\phz} \fM_{\F,\omega_1^a}\to ^{\phz} \fM_{\F,\omega_1^b}$ is multiplication by $v^{b-a}$. Therefore, the induced filtration on $^\phz D_{\omega^b_1}$ has $\mathrm{Fil}^1$ generated by the image of the basis element $v^{b-a}e_1$ modulo $u$. ($\mathrm{Fil}^0$ is everything and $\mathrm{Fil}^{2}=\{0\}$.)  

We leave the case of the base change to $\mathbb{Q}_{p^2}$ of the cuspidal type $\tau_2$ as an exercise. 
\end{exam}

\noindent We now come to the main theorem:
\begin{thm}\label{formal-smoothness of psi} The morphism $\Psi$ is formally smooth.
\end{thm}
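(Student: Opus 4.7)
The strategy is to verify the infinitesimal lifting criterion. Let $R \twoheadrightarrow R_0 := R/I$ be a square-zero thickening in $\mathrm{Nilp}_{\La}$ (so $I^2=0$). Suppose we are given $y_0 = (\fM_{R_0}, \phi_{R_0}, \{\hat g\}, (\beta^{(j)}_{R_0})) \in \widetilde{Y}^{[0,h],\tau,(\infty)}(R_0)$ and a lift $x = (L^{(j)}_R, \alpha^{(j)}_R, \varepsilon^{(j)}_R)_{j} \in \Fl_K^{E(u)}(R)$ of $\Psi(y_0)$. The task is to produce $y \in \widetilde{Y}^{[0,h],\tau,(\infty)}(R)$ with $y \bmod I = y_0$ and $\Psi(y) = x$.

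The module, descent datum, and eigenbasis lift trivially. Use the eigenbasis of $y_0$ to identify $\fM^{(j)}_{R_0} \cong R_0[\![v]\!]^n$ with diagonal $\Delta$-action of type $\tau$, and set $\fM^{(j)}_R := R[\![v]\!]^n$ with the same descent datum and the standard basis as lifted eigenbasis. The isotypic pieces $\fM^{(j)}_{R, \chi}$ and ${}^{\phz}\fM^{(j-1)}_{R, \chi}$ lift canonically as free $R[\![u]\!]$-direct summands. The only datum left to specify is the Frobenius $\phi^{(j-1)}_R$, which must commute with descent and have $P(v)$-height in $[0,h]$.

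The pair $(L^{(j)}_R, \alpha^{(j)}_R)$ together with the trivialization $\gamma^{(j)}$ induced from the lifted eigenbasis (Proposition~\ref{filtbasis}) determines a Frobenius map on the $\chi_{s_j(n)}$-isotypic piece that reduces modulo $I$ to $\phi^{(j-1)}_{R_0, \chi_{s_j(n)}}$. Because any descent-equivariant $R[\![v]\!]$-linear Frobenius is uniquely determined by its restriction to any single isotypic component---the other components being related by $R[\![v]\!]$-linear multiplication by appropriate powers of $v$---this extends to a unique candidate
\[
\phi^{(j-1)}_R \colon \phz^*\fM^{(j-1)}_R[1/P(v)] \risom \fM^{(j)}_R[1/P(v)]
\]
which reduces modulo $I$ to $\phi^{(j-1)}_{R_0}$.

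The crux is to verify the integrality condition: that $\phi^{(j-1)}_R$ sends $\phz^*\fM^{(j-1)}_R$ into $\fM^{(j)}_R$ with $P(v)$-height in $[0,h]$. This is precisely what the lifted filtration $\varepsilon^{(j)}_R$ controls. By the defining formula~(\ref{descfilt}), $\varepsilon^{(j)}_R$ records the images $\omega_i({}^{\phz}\fM^{(j-1)}_{R, \chi_{s_j(i)}})$ inside $L^{(j)}_R$ modulo $u$, and so encodes the modulo-$u$ shape of the Frobenius on each isotypic piece through diagram~(\ref{bigdiagram}). Combined with integrality modulo $I$ (valid because $y_0$ is a bona fide Kisin module) and the square-zero condition $I^2=0$, this forces integrality of $\phi^{(j-1)}_R$ over $R$. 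The main obstacle is precisely this integrality check: one must carefully chase diagram~(\ref{bigdiagram}) to translate the lifted filtration into integrality conditions on the various isotypic components of the Frobenius matrix, and use the vanishing of the resulting obstruction (a priori living in $I$-twisted coherent cohomology, killed by $I^2=0$ and by the given filtration data) to complete the construction of $y$.
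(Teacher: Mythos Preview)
Your overall setup (the infinitesimal lifting criterion, with the module, descent datum, and eigenbasis lifted trivially) matches the paper's, but the heart of the argument is missing.

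The sentence ``the pair $(L^{(j)}_R,\alpha^{(j)}_R)$ together with the trivialization $\gamma^{(j)}$ \ldots\ determines a Frobenius map on the $\chi_{s_j(n)}$-isotypic piece'' is not correct as stated: these data do \emph{not} by themselves determine $\phi^{(j-1)}_{R,\chi_{s_j(n)}}$. What is missing is a choice of $R[\![u]\!]$-isomorphism
\[
\theta^{(j)}\colon {}^{\phz}\fM^{(j-1)}_{R,\chi_{s_j(n)}} \;\longrightarrow\; L^{(j)}_R
\]
lifting the identification over $R/I$. Only then does one set $\phi^{(j-1)}_{R,\chi_{s_j(n)}} := (\gamma^{(j)})^{-1}\circ \alpha^{(j)}_R\circ \theta^{(j)}$. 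The paper's key point is that $\theta^{(j)}$ must be chosen to carry the \emph{canonical} filtration on ${}^{\phz}\fM^{(j-1)}_{R,\chi_{s_j(n)}}\bmod u$ (coming from the eigenbasis via the maps $\omega_i$) to the \emph{given} filtration $\eps^{(j)}_R$ on $L^{(j)}_R\bmod u$. With this choice, one checks directly that $\phi^{(j-1)}_{R,\chi_{s_j(n)}}\circ\omega_i$ lands in the image of $\omega'_i$, so that a unique integral $\phi^{(j-1)}_{R,\chi_{s_j(i)}}$ exists making diagram~(\ref{bigdiagram}) commute. This is a pure algebra argument; no obstruction theory enters.

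Your final paragraph tries to recover integrality after the fact via ``an obstruction \ldots\ killed by $I^2=0$'', but this is not valid. If one picks $\theta^{(j)}$ \emph{not} respecting the filtrations, the extension of $\phi$ to the $\chi_{s_j(i)}$-piece can acquire a genuine pole of the form $(\text{element of }I)\cdot u^{-1}$, and $I^2=0$ does nothing to remove it. The square-zero hypothesis is used only to set up the lifting problem, not to prove integrality. Moreover, even granting integrality, you have not verified that $\Psi(y)$ returns the \emph{given} filtration $\eps^{(j)}_R$; this again requires the filtration compatibility of $\theta^{(j)}$. In short: use $\eps^{(j)}_R$ in the \emph{construction} of $\theta^{(j)}$, not as a post-hoc check.
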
  
\begin{proof} Roughly, the idea is that the image under $\Psi$ gives the descent datum and the image of Frobenius. What is left is to choose an isomorphism between $\phz^*(\fM_R)$ and its image which is compatible with descent datum. By using the diagram (\ref{bigdiagram}), we show that it is enough to choose the isomorphism on the $\chi_{s_j(n)}$th isotypic pieces and that any choice works, giving formal smoothness. We now give the details. 

We can twist to reduce the case where $[a,b] = [0,h]$ so that the Frobenius is an honest endomorphism of $\fM_R$.  Let $R \in \Nilp_{\La}$ and let $I$ be a square-zero ideal of $R$.  Choose $(\fM_{R/I}, \phi_{R/I}, \{ \widehat{g} \}, \beta^{(i)}) \in  \widetilde{Y}^{[0,h], \tau, (\infty)}(R/I)$.  Assume we are given a lift $(L^{(j)}_R, \widetilde{\alpha}^{(j)}, \{ \mathrm{Fil}^i(L^{(j)} \mod v) \})$  of $\Psi(\fM_{R/I})$ to $R$.

Let $\fM_R$ be a free $(W \otimes_{\Zp} R)[[v]]$-module of rank $n$ and choose an isomorphism $\fM_R \otimes_R R/I \cong \fM_{R/I}$. By Proposition  \ref{filtbasis}, $\beta^{(j)}_{R/I}$ induces a trivialization $\gamma^{(j)}_{R/I}:\fM^{(j)}_{R/I, \chi_{s_j(n)}} \cong (R/I)[\![u]\!]^n$.   We can then choose trivializations $\widetilde{\beta}^{(j)}$ of $\fM_R^{(j)}$ for each $j$ such that the diagram
$$
\xymatrix{
\fM^{(j)}_R \ar[r]^{\widetilde{\beta}^{(j)}} \ar[d] & R[[v]]^n \ar[d] \\
\fM^{(j)}_{R/I} \ar[r]^{\beta^{(j)}} & (R/I) [[v]]^n \\
}
$$
commutes. Let $f^{(j)}_{s_j(i)}$ be the preimage of the $i$th standard basis element under $\widetilde{\beta}^{(j)}$.  We define a semilinear $\Delta$-action of type $\tau$ on $\fM_R$ by demanding that $\Delta$ act on $f^{(j)}_{s_j(i)}$ through the character $\chi_{s_j(i)}$.  This clearly makes $\widetilde{\beta}^{(j)}$ into an eigenbasis for this descent datum.

The eigenbasis $\widetilde{\beta}^{(j)}$ induces a filtration on $^{\phz} D^{(j)}_{\chi_{s_j(n)}} = \left(^{\phz} \fM^{(j)}_{R, \chi_{s_j(n)}}\right)/u\left(^{\phz} \fM^{(j)}_{R, \chi_{s_j(n)}}\right)$ as in (\ref{descfilt}) (compatible with reduction modulo $I$). Choose an isomorphism $\widetilde{\theta}^{(j)}:^{\phz} \fM^{(j-1)}_{R, \chi_{s_j(n)}} \cong L^{(j)}_R$ compatible with the filtrations on $^{\phz} D^{(j)}_{\chi_{s_j(n)}}$ and $L^{(j)}_R/ u L^{(j)}_R$ and such that the diagram
$$
\xymatrix{
\fM^{(j-1)}_{R, \chi_{s_j(n)}}\ar[r]^{\widetilde{\theta}^{(j)}} \ar[d] &  L^{(j)}_R \ar[d] \\
^{\phz} \fM^{(j-1))}_{R/I, \chi_{s_j(n)}}  \ar[r]^{\theta^{(j)}} &L^{(j)}_{R/I} \\
}
$$
commutes. (The isomorphism $\theta^{(j)}$ is already compatible with the filtrations on the reductions modulo $u$ by our assumption that $\Psi(\fM_{R/I})$ equals $L^{(j)}_{R/I}$ together with the induced extra data.)

Define $\phi^{(j-1)}_{R, \chi_{s_j(n)}}$ to be the composition
$$
^{\phz} \fM^{(j-1)}_{R, \chi_{s_j(n)}}[1/E_j(u)] \xrightarrow{\widetilde{\theta}^{(j)}} L^{(j)}_R[1/E_j(u)] \xrightarrow{\widetilde{\alpha}^{(j)}} (R[[u]]^n)[1/E_j(u)] \xrightarrow{(\widetilde{\gamma}^{(j)})^{-1}} \fM^{(j)}_{R, \chi_{s_j(n)}}[1/E_j(u)].
$$
Observe that the only map which not an isomorphism without inverting $E_j(u)$ is $\widetilde{\alpha}^{(j)}$.  The ``image'' of Frobenius is then determined by the image of $\widetilde{\alpha}^{(j)}$. 

If $\fM_R \in \widetilde{Y}^{[0,h], \tau, (\infty)}(R)$ then the Frobenius $\phi^{(j-1)}_R$ is uniquely determined by  $\phi^{(j-1)}_{R, \chi_{s_j(n)}}$ by diagram (\ref{bigdiagram}). Indeed, to construct $\phi^{(j-1)}_R$ it suffices to construct $\phi^{(j-1)}_{R, \chi_{s_j(i)}}$ for each $1 \leq i \leq n -1$ such that the diagram
$$
\xymatrix{
^{\phz} \fM^{(j-1)}_{R, \chi_{s_j(i)}} \ar[r]^{\omega_i} \ar[d]^{\phi^{(j-1)}_{R, \chi_{s_j(i)}}} & ^{\phz} \fM^{(j-1)}_{R, \chi_{s_j(n)}} \ar[d]^{\phi^{(j-1)}_{R, \chi_{s_j(n)}}} \\
\fM^{(j)}_{R, \chi_{s_j(i)}} \ar[r]^{\omega'_i} &  \fM^{(j)}_{R, \chi_{s_j(n)}} \\
}
$$
commutes. The horizontal arrows are induced by multiplication by $v^{\bf{a}^{(j)}_{s_j(n)}-\bf{a}^{(j)}_{s_j(i)}}$, so they are injections by Proposition~\ref{isopieces}.  The fact that $\widetilde{\theta}^{(j)}$ was chosen to respect filtrations implies that the composition $\phi^{(j-1)}_{R, \chi_{s_j(n)}} \circ \omega_i$ lies in the image of the bottom horizontal map $\omega_i'$ and so there exists a unique $\phi^{(j-1)}_{R, \chi_{s_j(i)}}$ which completes the diagram.     
\end{proof}

We can refine $\Psi$ to a morphism of finite type.  
 \begin{prop} Let $N > b-a$.  The map $\Psi$ factors through the finite type closed subscheme $\Fl^{[a,b], E(u)}_{K}$.  Furthermore, there exists a smooth map $\Psi^N: \widetilde{Y}^{[a,b], \tau} \ra \Fl^{[a,b], E(u)}_{K}$ such that $\Psi$ is the composition of 
 $$
\widetilde{Y}^{[a,b], \tau, (\infty)} \ra \widetilde{Y}^{[a,b], \tau} \xrightarrow{\Psi^N} \Fl^{[a,b], E(u)}_{K}.
$$    
\end{prop}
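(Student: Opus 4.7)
The plan proceeds in three steps. For the first assertion, I apply Lemma \ref{vheight} to $\fM_R \in \widetilde{Y}^{[a,b], \tau, (\infty)}(R)$: under the identification $\gamma^{(j)}:\fM^{(j)}_{R, \chi_{s_j(n)}} \cong L_{0,R}$ induced by the eigenbasis via Proposition \ref{filtbasis}, the image $\alpha^{(j)}(L^{(j)}) = \gamma^{(j)}\bigl(\phi^{(j-1)}_{R, \chi_{s_j(n)}}({}^{\phz}\fM^{(j-1)}_{R, \chi_{s_j(n)}})\bigr)$ is sandwiched between the appropriate powers of $E_j(u)$ times $L_{0,R}$. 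This is precisely the bound defining the finite-type closed subscheme $\Fl^{[a,b], E_j(u)}_{P_j} \subset \Fl^{E_j(u)}_{P_j}$ from Definition \ref{BDlattice}, and taking the product over $j$ gives the factorization of $\Psi$ through $\Fl^{[a,b],E(u)}_K$.

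Next, to construct $\Psi^N$, I would observe that once the lattice is bounded in $[a,b]$, its class in the flag variety depends on $\alpha^{(j)}$ only through its value modulo $E_j(u)^N$ whenever $N > b-a$. Explicitly, via the alternative description in Definition \ref{alt}, a point of $\Fl^{[a,b], E_j(u)}_{P_j}(R)$ is equivalent to the data of a sub-$R[\![u]\!]/E_j(u)^N$-module of the appropriate shift of $L_{0,R}/E_j(u)^N L_{0,R}$ together with a $P_j$-filtration on its quotient modulo $u$. The lattice $L^{(j)}$ and filtration $\eps^{(j)}$ appearing in $\Psi$ are intrinsic to $\fM_R$; only $\alpha^{(j)}$ uses the eigenbasis, and does so via $\gamma^{(j)}$ modulo $E_j(u)^N$. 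Since the map $u \mapsto v^{p^f-1}$ carries $E_j(u)^N$ to $\sigma_j(P(v))^N$, and since an eigenbasis modulo $P(v)^N$ determines $\gamma^{(j)}$ modulo $E_j(u)^N$ via the explicit recipe of Proposition \ref{filtbasis}, the assignment $\Psi$ descends to a morphism $\Psi^N:\widetilde{Y}^{[a,b], \tau} \to \Fl^{[a,b], E(u)}_K$ with $\Psi = \Psi^N \circ \pi$, where $\pi:\widetilde{Y}^{[a,b], \tau, (\infty)} \to \widetilde{Y}^{[a,b], \tau}$ is the canonical projection.

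Finally, for smoothness of $\Psi^N$, after truncating modulo $p^r$ the morphism is between finite-type $\La/p^r\La$-schemes by Theorem \ref{PRdescent} and Proposition \ref{parahoricgroups}, so it suffices to verify formal smoothness. Given a square-zero thickening $R \twoheadrightarrow R/I$ in $\Nilp_{\La}$, an element $x \in \widetilde{Y}^{[a,b], \tau}(R/I)$, and a lift $\widetilde{y}$ of $\Psi^N(x)$ to $\Fl^{[a,b],E(u)}_K(R)$, I would lift along $\pi$. Proposition \ref{smoothcovers} shows that $\pi$ is a Zariski-locally trivial torsor for $\prod_j \ker\bigl(L^{+, E_j(u)}\cP_j \twoheadrightarrow \cP_{j, N}\bigr)$, which is a pro-unipotent and hence formally smooth group; in particular $\pi$ is formally smooth and surjective, so (Zariski-locally on $\Spec R/I$) the element $x$ lifts to some $\widetilde{x}^{(\infty)} \in \widetilde{Y}^{[a,b], \tau, (\infty)}(R/I)$. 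The formal smoothness of $\Psi$ from Theorem \ref{formal-smoothness of psi} then produces a lift $\widetilde{x}^{(\infty)}_R \in \widetilde{Y}^{[a,b], \tau, (\infty)}(R)$ mapping to $\widetilde{y}$, and projecting along $\pi$ yields the desired lift of $x$ over $\widetilde{y}$. The main delicacy I anticipate is organizing the two lifting arguments coherently, in particular verifying that the kernel of $L^{+, E_j(u)}\cP_j \twoheadrightarrow \cP_{j, N}$ is genuinely formally smooth and that $\pi$ really is a torsor for it in the sense needed for descent; both follow by the same arguments that establish Proposition \ref{smoothcovers}, applied to the surjection $L^{+, E_j(u)}\cP_j \twoheadrightarrow \cP_{j, N}$.
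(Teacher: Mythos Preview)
Your proof is correct and follows essentially the same approach as the paper. The factorization through the bounded subscheme via Lemma~\ref{vheight} is identical, and your argument that $\Psi$ depends only on the eigenbasis modulo $P(v)^N$ is a lattice-theoretic rephrasing of the paper's group-action argument: the paper observes instead that two eigenbases congruent modulo $\sigma_j(P(v))^N$ differ by an element of $L^{+,E_j(u)}\cP_j$ congruent to the identity modulo $E_j(u)^N$, and such elements act trivially on $\Fl^{[a,b],E_j(u)}_{P_j}$ by the description in Definition~\ref{BDlattice}. The paper's proof does not actually spell out the smoothness of $\Psi^N$, leaving it implicit in the formal smoothness of $\Psi$; your explicit lifting argument through the formally smooth surjection $\widetilde{Y}^{[a,b],\tau,(\infty)} \to \widetilde{Y}^{[a,b],\tau}$ is correct and fills in a detail the paper omits.
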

\begin{proof}
Lemma \ref{vheight} says that image of $\Psi$ factors through $\Fl^{[a,b], E_j(u)}_{P_j}$ on each factor and hence through $\Fl^{[a,b], E(u)}_{K}$.  

To show that $\Psi$ factors as $\Psi^N$, we have to show that for any $(\fM_R, \phi_{R}, \{ \widehat{g} \}, \beta) \in \widetilde{Y}^{[a,b], \tau, (\infty)}(R)$ the image under $\Psi$ only depends on $\beta$ modulo $P(v)^N$. The image under $\Psi$ is the tuple
$$ 
\left(^{\phz} \fM^{(j-1)}_{R, \chi_{s_j(n)}}, \gamma^{(j)} \circ \phi^{(j -1)}_{R, \chi_{s_j(n)}}, \left\{ \mathrm{Fil}^i \left({}^{\phz} D^{(j-1)}_{\chi_{s_j(n)}}\right) \right\}_{i=1}^{n} \right)_{j \in \Z/f\Z}.
$$
From the construction of the map $\Psi$, we see that the eigenbases $\beta^{(j)}$ only affect the construction of trivializations $\gamma^{(j)}$ in (\ref{a1}). The eigenbasis $\beta^{(j)}$ has no effect on $^{\phz} \fM^{(j-1)}_{R, \chi_{s_j(n)}}$ or on the filtration on $^{\phz} D^{(j-1)}_{\chi_{s_j(n)}}$.  Furthermore, as we saw in the proof of Proposition \ref{smoothcovers}, changing the eigenbasis $\beta^{(j)}$ amounts to composing $\gamma^{(j)}: \fM^{(j)}_{R, \chi_{s_j(n)}} \cong (R[\![u]\!])^n$ with an element of $g \in L^{+, E_j(u)} \cP_j(R)$.  On $\Fl^{[a,b], E_j(u)}_{P_j}$, this corresponds to the natural left action of $L^{+, E_j(u)} \cP_j$ defined in  Proposition \ref{niceaction}.  

If $\beta^{(j)}$ and $\beta^{'(j)}$ are congruent modulo $\sigma_j(P(v))^N$, then $\gamma^{(j)} = g\cdot\gamma^{'(j)}$ for $g \in L^{+, E_j(u)} \cP_j (R)$ with $g \equiv \mathrm{Id} \mod E_j(u)^N$.   If $g$ is congruent to the identity modulo $E_j(u)^N$, then $g$ acts trivially on $\Fl^{[a,b], E_j(u)}_{P_j}$ (for example, by identifying $\Fl^{[a,b], E_j(u)}_{P_j}$ with lattices as in Definition \ref{BDlattice}).   
\end{proof} 

\begin{cor} There is a diagram 
$$
\xymatrix{
& \widetilde{Y}^{[a,b], \tau} \ar[dl]_{\pi^{N}} \ar[dr]^{\Psi^N} & \\
Y^{[a,b], \tau} & & \Fl_K^{[a,b], E(u)}, \\
}
$$
where both $\pi^N$ and $\Psi^N$ are smooth. 
\end{cor}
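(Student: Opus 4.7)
The plan is to assemble this essentially as a bookkeeping exercise combining several ingredients that are already in place. Since the statement concerns a diagram working modulo $p^N$ (i.e. over $\Nilp_\Lambda$), I will verify smoothness after reduction modulo any $p^r$ separately for each arrow.

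For the left arrow $\pi^N$, I would appeal directly to Proposition~\ref{smoothcovers}: this proposition already identifies $\pi^{(N)} : \widetilde{Y}^{[a,b], \tau} \to Y^{[a,b],\tau}$ as a Zariski-torsor for $\prod_{j \in \Z/f\Z} \cP_{j,N}$. By Proposition~\ref{parahoricgroups}, each $\cP_{j,N}$ is a smooth group scheme of finite type over $\Lambda$, hence so is the product. A torsor for a smooth affine group scheme is smooth, so $\pi^N$ is smooth (after reduction modulo any $p^r$).

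For the right arrow $\Psi^N$, the inputs are the preceding proposition, which produces the factorization
\[
\widetilde{Y}^{[a,b], \tau, (\infty)} \xrightarrow{q} \widetilde{Y}^{[a,b], \tau} \xrightarrow{\Psi^N} \Fl_K^{[a,b], E(u)}
\]
of $\Psi$, together with Theorem~\ref{formal-smoothness of psi}, which states that $\Psi$ is formally smooth. The key point is then to deduce formal smoothness of $\Psi^N$ from that of $\Psi = \Psi^N \circ q$. For this I would use that $q$ is itself a torsor for the formally smooth pro-algebraic group $\prod_j L^{+, E_j(u)} \cP_j$ (again by Proposition~\ref{smoothcovers} and Proposition~\ref{parahoricgroups}), and in particular is faithfully flat and formally smooth. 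The standard descent criterion then lets me lift any infinitesimal deformation problem for $\Psi^N$ along a square-zero ideal $I \subset R$: given a lift of $\Psi^N(\widetilde{\fM}_{R/I})$ to $R$, choose any $\widetilde{\fM}_R \in \widetilde{Y}^{[a,b],\tau,(\infty)}(R)$ lifting a local section of $q$ over $\widetilde{\fM}_{R/I}$ (possible by formal smoothness of $q$), apply formal smoothness of $\Psi$ to deform this to a lift at the infinite level with the prescribed image, and then push it back down via $q$. Since $q$ is a torsor, any two such constructions differ by action of the group, so this gives the desired lift for $\Psi^N$.

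Finally, to upgrade formal smoothness of $\Psi^N$ to smoothness, I would invoke the fact that both $\widetilde{Y}^{[a,b],\tau}_r$ (by Theorem~\ref{PRdescent}) and $\Fl_K^{[a,b], E(u)}_r$ (by the representability result for $\Fl^{Q(u),[a,b]}_P$, taken as a product over $j$) are of finite type over $\Spec \Lambda/p^r\Lambda$; a finite-type morphism between Noetherian schemes (or finite-type Artin stacks) that is formally smooth is smooth. The main subtlety, and the only place where care is required, is the descent of formal smoothness through $q$: because $q$ is a torsor for a pro-algebraic group rather than an honest smooth cover, one needs the torsor structure to guarantee that local sections of $q$ exist over any affine test scheme, which is exactly what Proposition~\ref{smoothcovers} provides in the Zariski topology.
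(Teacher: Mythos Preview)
Your approach is correct and is exactly how the paper proceeds: the Corollary is stated without proof, smoothness of $\pi^N$ being Proposition~\ref{smoothcovers} and smoothness of $\Psi^N$ being asserted in the immediately preceding Proposition (whose proof only spells out the factorization, leaving the smoothness deduction from Theorem~\ref{formal-smoothness of psi} implicit). You have supplied precisely that implicit step.

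One small clean-up in your lifting argument: you cannot invoke formal smoothness of $q$ to produce a point of $\widetilde{Y}^{[a,b],\tau,(\infty)}(R)$, since you do not yet have a lift of the base point to $\widetilde{Y}^{[a,b],\tau}(R)$---that is what you are trying to construct. The correct order is simpler: take a Zariski-local section $y' \in \widetilde{Y}^{[a,b],\tau,(\infty)}(R/I)$ of $q$ over your given $R/I$-point, apply formal smoothness of $\Psi$ directly to the pair $(y', z)$ to obtain $\tilde{y}' \in \widetilde{Y}^{[a,b],\tau,(\infty)}(R)$ with $\Psi(\tilde{y}') = z$, and then push down via $q$. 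No separate appeal to formal smoothness of $q$ is needed.
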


\section{$p$-adic Hodge type}

In this section, we define and study a closed substack $Y^{\mu, \tau} \subset Y^{[a,b], \tau}$ which is related to the notion of $p$-adic Hodge type. A similar construction but without descent data was carried out in \cite[\S 3]{PRcoeff}.  When $n = 2$ and $\mu \in (\{0,1\}^n)^{\Hom(K, \overline{\Q}_p)} $ (i.e., $\mu$ minuscule), $Y^{\mu, \tau}$ and the local model diagram are studied in forthcoming work of the first author with Emerton, Gee and Savitt~\cite{CEGS}. 

Let $\mu$ be a geometric cocharacter of $\Res_{K/\Qp} \GL_n$.   For each embedding $\sigma_j:K_0 \ra F$, we get a geometric cocharacter $\mu_j$ of $\Res_{K/K_0} \GL_n$ such that $\mu = (\mu_j)_{\sigma_j}$. Assume that $F = \La[1/p]$ contains the reflex field of the conjugacy class $[\mu]$, i.e., $\La = \La_{[\mu]}$. 

In \S 2, we defined the local model
$$
M(\mu) = \prod_{j \in \Z/f\Z} M(\mu_j) \subset  \prod_{j \in \Z/f\Z} \Fl^{E_j(u)}_{P_j} = \Fl^{E(u)}_K.
$$ 
By Theorem \ref{locmodels}, $M(\mu)$ is flat and projective over $\La$ with reduced special fiber.   Also, $M(\mu)$ is stable for the action of the ``loop group'' $\prod_{j \in \Z/f\Z} L^{+, E_j(u)} \cP_j$ by Proposition \ref{niceaction}. 

Assume that $a, b$ are integers with $a \leq b$ such that $M(\mu) \subset \prod_{i \in \Z/f\Z} \Fl^{[a,b], E_j(u)}_{P_j}$.  For any $N > a-b$, we saw in Proposition~\ref{niceaction} that the action of $\prod_{j \in \Z/f\Z} L^{+, E_j(u)} \cP_j$ on $M(\mu)$ factors through the action of the smooth connected group scheme $\prod_{j \in \Z/f\Z} \cP_{j, N}$.  

\begin{defn} Define the closed subscheme
$$
\widetilde{Y}^{\mu, \tau} := \widetilde{Y}^{[a,b], \tau} \times_{\Fl^{[a,b], E(u)}_K, \Psi^N} M(\mu).
$$ 
\end{defn} 

We have an induced smooth map
$$
\Psi^{\mu}: \widetilde{Y}^{\mu, \tau} \ra M(\mu).  
$$
We would like to show that $\widetilde{Y}^{\mu, \tau}$ descends to a closed substack $Y^{\mu, \tau} \subset Y^{[a,b], \tau}$.

\begin{prop} \label{mudescent}  For any $r \geq 1$, there exists a closed substack $Y^{\mu, \tau}_r \subset Y^{[a,b], \tau}_r$ such that the diagram
$$
\xymatrix{
\widetilde{Y}^{\mu, \tau}_r \ar[d]_{\pi^{\mu}} \ar[r] & \widetilde{Y}^{[a,b], \tau}_r \ar[d]^{\pi^{(N)}}\\
Y^{\mu, \tau}_r \ar[r] & Y^{[a,b], \tau}_r  \\
}
$$ 
is Cartesian. Furthermore, $Y^{\mu, \tau}_r \times_{\Z/p^r \Z} \Z/p^{r-1} \Z \cong Y^{\mu, \tau}_{r-1}$.
\end{prop}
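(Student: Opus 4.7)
The plan is to construct $Y^{\mu,\tau}_r$ by descending the closed subscheme $\widetilde{Y}^{\mu,\tau}_r \subset \widetilde{Y}^{[a,b],\tau}_r$ along the smooth cover $\pi^{(N)}$. By Proposition~\ref{smoothcovers}, $\pi^{(N)}:\widetilde{Y}^{[a,b],\tau}_r \to Y^{[a,b],\tau}_r$ is a Zariski-locally trivial torsor for the smooth affine group scheme $G_N := \prod_{j \in \Z/f\Z} \cP_{j,N}$. In particular $\pi^{(N)}$ is a smooth surjection and hence an fpqc cover, so closed subschemes of the total space which are $G_N$-stable descend to closed substacks of the base. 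It is therefore enough to check that $\widetilde{Y}^{\mu,\tau}_r$ is $G_N$-stable and then to take $Y^{\mu,\tau}_r$ to be the resulting descended substack; the Cartesian property of the diagram is immediate from this definition.

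To verify $G_N$-stability of $\widetilde{Y}^{\mu,\tau}_r = (\Psi^N)^{-1}(M(\mu))$, I would first check that $\Psi^N$ is $G_N$-equivariant, where $G_N$ acts on $\Fl^{[a,b],E(u)}_K$ through the natural loop-group action of Proposition~\ref{niceaction}, which factors through $G_N$ as soon as $N > b-a$. Unwinding the construction of $\Psi^N$, an element $g = (A^{(j)}) \in G_N(R)$ changes the eigenbasis exactly by the formula derived in the proof of Proposition~\ref{smoothcovers} and thereby multiplies the trivialization $\gamma^{(j)}$ of $\fM^{(j)}_{R,\chi_{s_j(n)}}$ on the left by $A^{(j)}$; this is precisely the loop-group action on $\Fl^{[a,b],E_j(u)}_{P_j}$. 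The remaining data of $\Psi^N$, namely the lattice $L^{(j)} = {}^{\phz}\fM^{(j-1)}_{R,\chi_{s_j(n)}}$ and the filtration $\eps^{(j)}$ from (\ref{descfilt}), is intrinsic to the Kisin module with descent datum and is unaffected by the change of eigenbasis. Since $M(\mu)$ is $G_N$-stable by Proposition~\ref{niceaction}, its preimage under the equivariant map $\Psi^N$ is $G_N$-stable, completing the descent step.

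For the compatibility statement, note that $M(\mu)$ is flat over $\Spec \La$ by Theorem~\ref{locmodels}, so its formation commutes with the base change $\La/p^r \La \to \La/p^{r-1} \La$. Both $\widetilde{Y}^{[a,b],\tau}$ and $\Fl^{[a,b],E(u)}_K$ are fpqc stacks defined on $\Nilp_{\La}$ and hence also commute with this base change, so the defining fiber product gives $\widetilde{Y}^{\mu,\tau}_r \otimes_{\Z/p^r\Z} \Z/p^{r-1}\Z \cong \widetilde{Y}^{\mu,\tau}_{r-1}$. Since descent along the smooth torsor $\pi^{(N)}$ is itself compatible with base change on the coefficient ring, the same identity passes to the quotient and yields $Y^{\mu,\tau}_r \otimes_{\Z/p^r\Z} \Z/p^{r-1}\Z \cong Y^{\mu,\tau}_{r-1}$.

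The main obstacle is the careful identification of the two $G_N$-actions, one by change of eigenbasis on the Kisin-module side and the other by the natural loop-group action on the affine flag side, and the resulting verification that $\Psi^N$ is equivariant. Both actions are implicit in the constructions of Section~\ref{smooth modification}, but matching them is the only genuinely non-formal input; once it is in place, everything else is standard fpqc descent for closed immersions combined with flatness of $M(\mu)$.
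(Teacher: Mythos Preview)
Your proposal is correct and follows essentially the same approach as the paper: use that $\pi^{(N)}$ is a torsor for $\prod_j \cP_{j,N}$ (Proposition~\ref{smoothcovers}), observe that $\widetilde{Y}^{\mu,\tau}_r = (\Psi^N)^{-1}(M(\mu))$ is stable because $\Psi^N$ is equivariant and $M(\mu)$ is stable (Proposition~\ref{niceaction}), and descend. The only difference is that you spell out the equivariance of $\Psi^N$ in more detail than the paper, which had already recorded this fact in the proof that $\Psi$ factors through $\Psi^N$; your invocation of flatness of $M(\mu)$ for the compatibility with reduction mod $p^{r-1}$ is harmless but unnecessary, since that compatibility is just associativity of base change.
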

\begin{proof} 
By Proposition \ref{smoothcovers}, $\pi^{(N)}:\widetilde{Y}^{[a,b], \tau}_r \ra Y^{[a,b], \tau}_r$ is a torsor for the smooth group $\cG_r := (\prod_{j \in \Z/f\Z} \cP_{j, N})_{\La/p^r \La}$.   Any $\cG_r$-stable closed subscheme of $\widetilde{Y}^{[a,b], \tau}_r$ descends by faithfully flat descent to a closed substack of  $Y^{[a,b], \tau}_r$.  

Since $(M(\mu))_{\La/p^r \La}$ is stable under $\cG_r$ so is  $\widetilde{Y}^{\mu, \tau}_r$ and we define the desired $Y^{\mu, \tau}_r$ by descent.   This construction is clearly compatible with reduction modulo $p^{r-1}$.     
\end{proof} 

\noindent Since the $Y^{\mu, \tau}_r$ are compatible with reduction modulo $p^{r-1}$, we can define a stack $Y^{\mu, \tau}$ on $\Nilp_{\La}$ whose reduction modulo $p^r$ is $Y^{\mu, \tau}_r$.  


\begin{thm}\label{main thm in body} We have a local model diagram:  
\begin{equation} \label{lmdiagram}
\xymatrix{
& \widetilde{Y}^{\mu, \tau} \ar[dl]_{\pi^{\mu}} \ar[dr]^{\Psi^{\mu}} & \\
Y^{\mu, \tau} & & M(\mu) \\
}
\end{equation}
where both $\pi^{\mu}$ and $\Psi^{\mu}$ are smooth maps. 
\end{thm}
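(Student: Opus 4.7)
The plan is to observe that both maps are obtained by base change from maps whose smoothness we have already established, so the theorem is essentially a bookkeeping assembly of previous results rather than a new computation.

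First I would handle $\Psi^\mu$. By construction,
\[
\widetilde{Y}^{\mu,\tau} \;=\; \widetilde{Y}^{[a,b],\tau} \times_{\Fl^{[a,b],E(u)}_K,\,\Psi^N} M(\mu),
\]
so the map $\Psi^\mu\colon \widetilde{Y}^{\mu,\tau}\to M(\mu)$ is the pullback of $\Psi^N\colon \widetilde{Y}^{[a,b],\tau}\to \Fl^{[a,b],E(u)}_K$ along the closed immersion $M(\mu)\hookrightarrow \Fl^{[a,b],E(u)}_K$. Since $\Psi^N$ was shown to be smooth in the corollary following Theorem~\ref{formal-smoothness of psi}, its base change $\Psi^\mu$ is smooth. (Working mod $p^r$ for each $r\geq 1$, as in our definition of smoothness over $\Nilp_{\La}$, is automatic here since both $\Psi^N$ and the fibre product were formed compatibly with reduction.)

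Next I would handle $\pi^\mu$. Proposition~\ref{mudescent} gives, for each $r\geq 1$, a Cartesian square
\[
\xymatrix{
\widetilde{Y}^{\mu, \tau}_r \ar[d]_{\pi^{\mu}} \ar[r] & \widetilde{Y}^{[a,b], \tau}_r \ar[d]^{\pi^{(N)}}\\
Y^{\mu, \tau}_r \ar[r] & Y^{[a,b], \tau}_r,  \\
}
\]
so $\pi^\mu\bmod p^r$ is the base change of $\pi^{(N)}\bmod p^r$ along the closed immersion $Y^{\mu,\tau}_r\hookrightarrow Y^{[a,b],\tau}_r$. By Proposition~\ref{smoothcovers}, $\pi^{(N)}$ is a Zariski-torsor for the smooth $\La$-group $\prod_{j\in\Z/f\Z}\cP_{j,N}$ (smoothness of the $\cP_{j,N}$ comes from Proposition~\ref{parahoricgroups}), and in particular $\pi^{(N)}$ is smooth; its base change $\pi^\mu$ is therefore smooth as well. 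Assembling these compatibly over $r$ gives smoothness of $\pi^\mu$ on $\Nilp_{\La}$.

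There is no real obstacle here beyond checking that the fibre products in Proposition~\ref{mudescent} and in the definition of $\widetilde{Y}^{\mu,\tau}$ genuinely do fit into Cartesian diagrams involving the already-smooth maps $\Psi^N$ and $\pi^{(N)}$; the only minor point to verify is that our definition of ``smooth'' on stacks over $\Nilp_{\La}$ (smoothness after reduction mod $p^r$ for every $r$) is preserved under base change, which is immediate. Thus the diagram (\ref{lmdiagram}) is just the assembly of the previously constructed local model diagram for $\widetilde{Y}^{[a,b],\tau}$ restricted over the closed substack $M(\mu)\subset \Fl^{[a,b],E(u)}_K$, and descended via $\pi^{(N)}$.
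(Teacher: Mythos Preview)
Your proposal is correct and matches the paper's approach: the paper gives no explicit proof of this theorem, treating it as an immediate consequence of the preceding constructions (the definition of $\widetilde{Y}^{\mu,\tau}$ as a fibre product over $\Psi^N$, and the Cartesian square of Proposition~\ref{mudescent}), which is exactly the base-change argument you have written out.
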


\subsection{Special fiber: Kottwitz-Rapoport strata} 

In addition to imposing the $p$-adic Hodge type $\mu$ via the local model diagram (\ref{lmdiagram}), we can also stratify the special fiber of $Y^{\mu, \tau}$ by pulling back the stratification in Theorem~\ref{lm special}.  This is the analogue of the Kottwitz-Rapoport stratification in the Shimura variety setting.  

Let $\overline{Y}^{\mu, \tau}$ denote the special fiber of $Y^{\mu, \tau}$.  As in the discussion before Theorem~\ref{lm special}, we can write $\mu_j = (\mu_{j, \psi})$ where $\psi$ runs over embeddings $\psi:K \iarrow \overline{F}$ that extend $\sigma_j$ and where each $\mu_{j, \psi}$ is dominant. We define 
$$
\lambda_j = \sum_{\psi:K \iarrow \overline{F}} \mu_{j, \psi}. 
$$

\begin{prop} For each $\widetilde{w} = (\widetilde{w}_j) \in \prod_{j =0}^{f-1} \Adm_{P_j} (\lambda_j)$, there is a locally closed substack $\overline{Y}^{\mu, \tau}_{\widetilde{w}} \subset \overline{Y}^{\mu, \tau}$ such that 
$$
(\pi^{\mu})^{-1} \left(\overline{Y}^{\mu, \tau}_{\widetilde{w}} \right) =  (\Psi^{\mu})^{-1} \left(\prod_{j} S^0(\widetilde{w}_j)\right).
$$ 
Furthermore, the closure $\overline{Y}^{\mu, \tau}_{\leq \widetilde{w}}$ of $\overline{Y}^{\mu, \tau}_{\widetilde{w}}$ is the union of the strata for all $(\widetilde{w}'_j) \in \prod_{j =0}^{f-1} \Adm_{P_j} (\lambda_j)$ such that $\widetilde{w}'_j \leq \widetilde{w}_j$ for all $j$. 
\end{prop}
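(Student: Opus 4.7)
The plan is to combine Theorem~\ref{lm special}, which provides the Kottwitz-Rapoport stratification of $\overline{M}(\mu)$ by open affine Schubert cells, with the smoothness of both legs of the local model diagram~(\ref{lmdiagram}). I would first pull back the stratification along $\Psi^{\mu}$ to obtain a stratification of the special fiber of $\widetilde{Y}^{\mu, \tau}$, and then descend it to $\overline{Y}^{\mu, \tau}$ along the smooth torsor $\pi^{\mu}$, mimicking the argument in Proposition~\ref{mudescent}.

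Concretely, set $\widetilde{Z}_{\widetilde{w}} := (\Psi^{\mu})^{-1}\bigl(\prod_j S^{\circ}(\widetilde{w}_j)\bigr)$ inside $\widetilde{Y}^{\mu, \tau}_{\mathbb{F}}$. Since $\Psi^{\mu}$ is smooth and $\prod_j S^{\circ}(\widetilde{w}_j)$ is locally closed in $\overline{M}(\mu)$, the pullback $\widetilde{Z}_{\widetilde{w}}$ is locally closed. The key step is to verify that $\widetilde{Z}_{\widetilde{w}}$ is stable under the action of the smooth connected group scheme $\overline{\mathcal{G}} := \prod_{j} \cP_{j, N, \mathbb{F}}$ that makes $\pi^{\mu}$ a torsor; granting this, faithfully flat descent as in Proposition~\ref{mudescent} yields a locally closed substack $\overline{Y}^{\mu, \tau}_{\widetilde{w}} \subset \overline{Y}^{\mu, \tau}$ whose pullback along $\pi^{\mu}$ recovers $\widetilde{Z}_{\widetilde{w}}$. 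The closure statement then follows by combining the standard Bruhat closure $\overline{S^{\circ}(\widetilde{w}_j)} = \bigcup_{\widetilde{w}'_j \le \widetilde{w}_j} S^{\circ}(\widetilde{w}'_j)$ in the affine flag variety (already used in Theorem~\ref{lm special}) with the flatness and surjectivity of both $\Psi^{\mu}$ and $\pi^{\mu}$, which ensure that scheme-theoretic closures are preserved under pullback and descent.

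The main obstacle is the $\overline{\mathcal{G}}$-stability of $\widetilde{Z}_{\widetilde{w}}$. This reduces to the assertion that, under the identification of the special fiber of $\Fl^{E_j(u)}_{P_j}$ with the affine flag variety $\mathrm{Fl}_{P_{\mathbb{F}}}$, the finite-level parahoric group scheme $\cP_{j, N}$ acts through its image in the positive loop group $L^+\mathcal{P}_j$ whose orbits are precisely the open Schubert cells $S^{\circ}(\widetilde{w}_j)$. This compatibility is essentially built into the setup of Section~2, in particular into the construction of $L^{+, E_j(u)} \cP_j$ before Proposition~\ref{parahoricgroups}, but writing it cleanly requires matching two parahoric-level structures on the nose: the one cutting out the local model and the one defining the Schubert stratification. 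Once this matching is in place, stability is immediate because Schubert cells are, by definition, orbits of the positive loop group of the relevant parahoric.
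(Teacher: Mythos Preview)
Your proposal is correct and follows essentially the same route as the paper: pull back the Schubert cells along $\Psi^{\mu}$, use that they are orbits of the parahoric loop group (hence stable under $\prod_j \cP_{j,N}$) to descend along the torsor $\pi^{\mu}$ exactly as in Proposition~\ref{mudescent}, and deduce the closure relations from the smoothness of both maps together with the standard Bruhat closure relations in the affine flag variety. The only minor comment is that surjectivity of $\Psi^{\mu}$ is not actually needed for the closure argument---flatness (which follows from smoothness) already ensures that pullback commutes with scheme-theoretic closure.
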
 
\begin{proof} 
We would like to define $\overline{Y}^{\mu, \tau}_{\widetilde{w}}$ by faithfully flat descent from $(\Psi^{\mu})^{-1} \left(\prod_{j} S^0(\widetilde{w}_j)\right)$.  To descend along $\pi^{\mu}$, we need $(\Psi^{\mu})^{-1} \left(\prod_{j} S^0(\widetilde{w}_j)\right)$ or equivalently $\prod_{j} S^0(\widetilde{w}_j)$  to be stable under the action of $(L^{+, E_j(u)} \cP_j)_{\F}$.  The group scheme $(L^{+, E_j(u)} \cP_j)_{\F}$  is the parahoric group scheme $\cP_j$ corresponding to $P_j$ defined before Theorem \ref{lm special} whose orbits are exactly the open affine Schubert cells $S^0(\widetilde{w}_j)$.  Since $\overline{M}(\mu)$ is union of $\prod_{j} S^0(\widetilde{w}_j)$ for $\widetilde{w}_j \in \Adm_{P_j}(\lambda_j)$ (Theorem \ref{lm special}), the same is true for $\overline{Y}^{\mu, \tau}$.  The closure relations follow from smoothness of $\pi^{\mu}$ and $\Psi^{\mu}$.   
\end{proof}

We now introduce the notion of \emph{shape} (or \emph{genre} in French).  The \emph{genre} of Kisin/Breuil module of rank 2 was first introduced in~\cite{BreuilLG} where it is connected to Serre weights for $\GL_2$ over an unramified extension of $\mathbb{Q}_p$. It also plays an important role in~\cite{BM2, EGS} in computing tamely Barsotti-Tate deformation rings as well as in the recent work of~\cite{CDM1, CDM2}. The notion of shape for a rank 3 Kisin modules with $p$-adic Hodge type $(2,1,0)$ and $K/\Qp$ unramified will be used in forthcoming joint work of the second author~\cite{LLLM1} to compute potentially crystalline deformation rings for $\GL_3$. 

\begin{defn} \label{shape}
A Kisin module $\overline{\fM} \in \overline{Y}_{\widetilde{w}}^{\mu, \tau}(\overline{\F}_p)$ is said to have \emph{shape} $\widetilde{w}$. 
\end{defn}

\begin{rmk} The shape of Kisin module $\overline{\fM} \in \overline{Y}^{\mu, \tau}(\overline{\F})$ has a more concrete interpretation as well. $\overline{\fM}$ has shape $(\widetilde{w}_j)$ if the matrix for the Frobenius $\phi^{(j)}_{\overline{\F}, \chi_{s_j(n)}}$ with respect to any basis compatible with the filtration lies in the double coset $L^+ \cP_j(\overline{\F}) \widetilde{w}_j L^+ \cP_j(\overline{\F})$. 
\end{rmk}
  
\subsection{Generic fiber} 

We would now like to characterize $Y^{\mu, \tau}$ so that we can relate it back to potentially crystalline representations and Hodge-Tate weights in the next section.  Since $M(\mu)$ is defined by flat closure, this has to be done by working over the ``generic'' fiber in some suitable sense.  


For any complete local Noetherian $\La$-algebra $R$ with finite residue field and maximal ideal $m_R$, we define the $R$-points of $Y^{[a,b],  \tau}$ as the inverse limit category
$$
Y^{[a,b],  \tau}(R) = \{   (\fM_k, \iota_k) \mid \fM_k \in Y^{[a,b],  \tau}(R/m_R^k R) , \iota_{k} : \fM_k \otimes R/m_R^{k-1} R \cong \fM_{k-1} \}.
$$
Similarly, we can define $Y^{\mu, \tau}(R)$.

Given $(\fM_k, \iota_k) \in Y^{[a,b],  \tau}(R)$, the inverse limit $\fM_R = \varprojlim \fM_k$ is a module over $(W \otimes_{\Zp} R)[\![v]\!]$ equipped with a semilinear Frobenius 
$$
\phi_R:\phz^*(\fM_R)[1/P(v)] \ra \fM_R[1/P(v)]
$$
and descent datum of type $\tau$. 

We now introduce the notion $p$-adic Hodge type first for $\overline{\Q}_p$-points and then more generally.   Let $F'/F$ be a finite extension with ring of integers $\La'$. 

\begin{prop} \label{modPvred} For any Kisin module $\fM_{\La'} \in Y^{[a,b],  \tau}(\La')$, let $\fM_{F'} := \fM_{\La'}[1/p]$. Then the specialization
$$
\cD_{F'} := \phz^*(\fM_{F'})/ P(v) \phz^*(\fM_{F'})
$$
is a finitely generated projective $L \otimes_{\Qp} F'$-module with a semilinear action of $\Delta$.  
\end{prop}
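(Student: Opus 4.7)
The plan is to decompose $\fM_{\La'}$ according to the factorization $W \otimes_{\Zp} \La' \cong \prod_{j \in \Z/f\Z} \La'$, analyze each summand after inverting $p$, and then reduce modulo $P(v)$. Write $\fM_{\La'} = \bigoplus_j \fM_{\La'}^{(j)}$ with each $\fM_{\La'}^{(j)}$ a finitely generated projective $\La'[\![v]\!]$-module of rank $n$. Inverting $p$, each $\fM_{F'}^{(j)} := \fM_{\La'}^{(j)}[1/p]$ is a finitely generated projective $\La'[\![v]\!][1/p]$-module of rank $n$, and the same holds for the Frobenius pullback $\phz^*(\fM_{F'}^{(j-1)})$.

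Since $P(v)$ is monic of positive degree, Weierstrass-style division in $\La'[\![v]\!]$ shows that the natural map $F'[v] \to \La'[\![v]\!][1/p]$ induces an isomorphism
$$F'[v]/(P(v)) \xrightarrow{\sim} \La'[\![v]\!][1/p]/(P(v)).$$
The left-hand side is canonically isomorphic to $L \otimes_{\Qp} F'$, since $L \cong \Qp[v]/(P(v))$ by the very definition of $P(v)$ as the minimal polynomial of $\pi_L$ over $\Qp$. Because finitely generated projective modules behave well under base change of rings, it follows that each $\phz^*(\fM_{F'}^{(j-1)})/P(v)\phz^*(\fM_{F'}^{(j-1)})$ is a finitely generated projective $L \otimes_{\Qp} F'$-module of rank $n$, and summing over $j$ shows that $\cD_{F'}$ is a finitely generated projective $L \otimes_{\Qp} F'$-module of rank $nf$.

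Finally, for the $\Delta$-action: the semilinear action $\{\widehat{g}\}$ on $\fM_{\La'}$ commutes with $\phi$ by the definition of a Kisin module with descent datum, hence lifts to $\phz^*(\fM_{\La'})$ and its base change $\phz^*(\fM_{F'})$. As observed in the discussion preceding Definition \ref{defnheight}, each $\widehat{g}$ fixes $P(v)$, so the action descends to $\cD_{F'}$. The resulting action is semilinear over $L \otimes_{\Qp} F'$, with $\Delta$ acting on the $L$-factor through $v \mapsto \omega_f(g) v$ and trivially on $F'$. There is no substantial obstacle: the argument is a formal verification once one identifies $\La'[\![v]\!][1/p]/(P(v))$ with $L \otimes_{\Qp} F'$ and notes that $\widehat{g}$ preserves $P(v)$.
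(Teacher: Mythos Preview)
Your proof is correct and follows the same approach as the paper's: identify the quotient ring $((W \otimes_{\Zp} \La')[\![v]\!])[1/p]/P(v)$ with (copies of) $L \otimes_{\Qp} F'$ and invoke base change of finitely generated projective modules. The paper's one-line proof asserts this ring identification directly, while you unpack it by decomposing along embeddings $\sigma_j$, making explicit the monic-division step $F'[v]/(P(v)) \xrightarrow{\sim} \La'[\![v]\!][1/p]/(P(v))$, and verifying that the $\Delta$-action descends because $\widehat{g}$ fixes $P(v)$; none of this is a different idea, just more detail than the paper supplies.
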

\begin{proof}  This follows from the fact that $((W \otimes_{\Zp} \La')[\![v]\!])[1/p]/P(v) \cong L \otimes_{\Qp} F'$ and that $\fM_{F'}$ is finitely generated and  projective over $((W \otimes_{\Zp} \La')[\![v]\!])[1/p]$.
\end{proof} 

We can define a filtration on $\cD_{F'}$ as in \cite{KisinPSS}.
\begin{defn} \label{KisinHodge} Define
$$
\Fil^i( \phz^*(\fM_{F'})) := \{ m \in \phz^*(\fM_{F'}) \mid \phi_{\fM_{F'}}(m) \in P(v)^i \fM_{F'} \}. 
$$
Define $L \otimes_{\Qp} F'$-submodules
$$ 
\Fil^i(\cD_{F'}) := \Fil^i( \phz^*(\fM_{F'})) / (\Fil^i( \phz^*(\fM_{F'}))  \cap P(v)  \phz^*(\fM_{F'}))  \subset \cD_{F'}.
$$
\end{defn} 

\begin{rmk} If $\fM_{F'}$ has height in $[a,b]$ then it is a decreasing filtration with $\Fil^a(\cD_{F'}) = \cD_{F'}$ and $\Fil^{b+1}(\cD_{F'}) = 0.$  
\end{rmk}

For $\fM_{F'}$ as in Proposition \ref{modPvred} and $\chi\in \Delta^*$, we can define $\cD^{(j)}_{F', \chi} := ^{\phz} \fM^{(j-1)}_{F', \chi}/ E_j(u) ^{\phz} \fM^{(j-1)}_{F', \chi}$ together with a filtration defined in an analogous way using $\phi^{(j-1)}_{\fM_{F'}, \chi}$ and $E_j(u)$ in place of $\phi_{\fM_{F'}}$ and $P(v)$. 

\begin{lemma} \label{modEured} Let $\fM_{F'}$ be as in Proposition \ref{modPvred}. Let $\cD_{F'}^{(j)}$ be the $L\otimes_{K_0, \sigma_j}F'$-submodule of $\cD_{F'}$ corresponding to $\sigma_j:K_0\hookrightarrow F'$. There is a natural isomorphism 
$$
\cD_{F'}^{(j)} \cong \cD^{(j)}_{F', \chi_{s_j(n)}} \otimes_K L
$$
of filtered $L \otimes_{K_0, \sigma_j}F'$-modules.  
\end{lemma}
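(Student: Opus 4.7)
The plan is to realize both sides of the asserted isomorphism as $\Delta$-isotypic direct sums and identify them by multiplication by powers of $v$.

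First, I would identify $\cD_{F'}^{(j)}$ concretely. Over $K_0 \hookrightarrow F'$, one has $E(u) = \prod_{j'} E_{j'}(u)$, so $P(v) = \prod_{j'} E_{j'}(v^{p^f-1})$; combined with $W \otimes_{\Zp} F' = \prod_k F'$ and the fact that $\phz^*\fM^{(j-1)}_{F'}$ lies in the $\sigma_j$-component of $\phz^*\fM_{F'}$ (as witnessed by the target of $\phi^{(j-1)}$ lying in $\fM^{(j)}_{F'}$), the Chinese Remainder Theorem identifies the $L\otimes_{K_0,\sigma_j}F'$-submodule $\cD_{F'}^{(j)}$ of $\cD_{F'}$ with $\phz^*\fM^{(j-1)}_{F'}/E_j(u)\phz^*\fM^{(j-1)}_{F'}$, a free module of rank $n$ over $L\otimes_{K_0,\sigma_j}F'$.

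Next, I would invoke Proposition~\ref{isopieces} to decompose $\phz^*\fM^{(j-1)}_{F'} = \bigoplus_{\chi\in\Delta^*} {}^{\phz}\fM^{(j-1)}_{F',\chi}$ as $F'[\![u]\!]$-modules. Since $E_j(u) \in F'[\![u]\!]$ respects this decomposition, reducing modulo $E_j(u)$ yields $\cD_{F'}^{(j)} = \bigoplus_\chi \cD^{(j)}_{F',\chi}$ as $K\otimes_{K_0,\sigma_j}F'$-modules. Multiplication by $v$ sends the $\chi$-isotypic piece to the $(\sigma_j\omega_f)\chi$-piece and is injective (Proposition~\ref{isopieces}); coprimality of $v$ and $E_j(u)$ in $F'[\![v]\!]$ preserves injectivity modulo $E_j(u)$. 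Since source and target of $v^k$ are free of rank $n$ over the Artinian ring $K\otimes_{K_0,\sigma_j}F'$, a length count upgrades injectivity to bijectivity.

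Because $\sigma_j\omega_f$ generates $\Delta^*$, running $k = 0, \ldots, p^f-2$ exhausts all characters, so assembling the bijections yields the desired isomorphism
\[
\cD^{(j)}_{F',\chi_{s_j(n)}} \otimes_K L \xrightarrow{\sim} \cD_{F'}^{(j)}, \qquad x\otimes \pi_L^k \mapsto v^k \cdot x,
\]
using the $K$-basis $\{1, \pi_L, \ldots, \pi_L^{p^f-2}\}$ of $L$. The $L\otimes_{K_0,\sigma_j}F'$-linearity follows from the identity $v^{p^f-1} = u = \pi_K = \pi_L^{p^f-1}$ in the quotient. For the filtration compatibility, the Kisin filtration $\Fil^i$ on $\cD_{F'}^{(j)}$ is defined by divisibility of $\phi$-images by $P(v)^i = \prod_{j'} E_{j'}(u)^i$, and coprimality of the factors of $P(v)$ means this condition restricts on the $\sigma_j$-piece to $E_j(u)^i$-divisibility, which is exactly the condition defining $\Fil^i$ on each $\cD^{(j)}_{F',\chi}$. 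Since $\phi$ commutes with multiplication by $v$, the $v^k$-identifications preserve $\Fil^i$, so base change from $K$ to $L$ matches the two filtrations.

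The main obstacle is pinning down the indexing conventions for the various decompositions of $\cD_{F'}$ and verifying that the Kisin filtration decomposes cleanly across isotypic pieces in a way that the $v^k$-identifications respect.
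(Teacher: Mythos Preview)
Your proposal is correct and follows essentially the same approach as the paper: decompose $\cD_{F'}^{(j)}$ into $\Delta$-isotypic pieces, identify them pairwise via multiplication by powers of $v$ (which becomes an isomorphism once $p$ is inverted and $E_j(u)=0$), and upgrade to an $L\otimes_{K_0,\sigma_j}F'$-linear isomorphism using $v^{p^f-1}=u\mapsto \pi_K\otimes 1$. Your filtration argument (that $\phi$ commutes with multiplication by $v$, hence the $v^k$-identifications respect $\Fil^i$) is exactly the content of the paper's appeal to the commutativity of diagram~(\ref{bigdiagram}); you have simply unpacked what that commutativity says.
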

\begin{proof} First, note that we have the isotypic decomposition $\cD_{F'}^{(j)}=\oplus_{\chi\in \Delta^*} \cD^{(j)}_{F',\chi}$ as $K\otimes_{K_0,\sigma_j}F'$-modules, which gives an isomorphism $\cD_{F'}^{(j)} \cong \cD^{(j)}_{F', \chi_{s_j(n)}} \otimes_K L$ of $K\otimes_{K_0,\sigma_j}F'$-modules, since multiplication by $v$ when $p$ is inverted and $P(v)=0$ induces isomorphisms $\cD^{(j)}_{F',\chi}\cong \cD^{(j)}_{F', (\sigma_j\circ\omega_f)\chi}$ as $F'$-modules. This can be upgraded to an isomorphism $\cD_{F'}^{(j)} \cong \cD^{(j)}_{F', \chi_{s_j(n)}} \otimes_K L$ of $L\otimes_{K_0,\sigma_j}F'$-modules, because multiplication by $v^{p^{f}-1}$ is multiplication by $u$, which is identified with $\pi_K\otimes 1$ under the isomorphism $((W \otimes_{K_0,\sigma_j} \La')[\![u]\!])[1/p]/E_j(u) \cong K \otimes_{K_0,\sigma_j} F'$. This means that $v$ is identified with $\pi_L\otimes1 \in L\otimes_{K_0,\sigma_j}F'$. The fact that the isomorphism $\cD_{F'}^{(j)} \cong \cD^{(j)}_{F', \chi_{s_j(n)}} \otimes_K L$ respects the filtrations on the two sides follows from the commutative diagram~\ref{bigdiagram}, where all the horizontal maps are now isomorphisms. 
\end{proof}

Recall that we assume the conjugacy class of $\mu$ is defined over $F$, i.e., $F = F_{[\mu]}$. Associated to $\mu$, we then have a $\Z$-graded $K \otimes_{\Qp} F$-module $V_{\mu}$ of rank $n$.  See for example \cite[(2.6)]{KisinPSS}.  
 
 \begin{defn} Let $F'/F$ be a finite extension with ring of integers $\La'$.  We say that $\fM_{\La'} \in  Y^{[a,b],  \tau}(\La')$ has \emph{$p$-adic Hodge type $\mu$} if 
 $$
 \gr^{\bullet}(\cD_{F'}) \cong \gr^{\bullet} (V_{\mu} \otimes_{K \otimes_{\Qp} F} (L \otimes_{\Qp} F'))
 $$ 
 as graded $L \otimes_{\Qp} F'$-modules.  
 
We say $\fM_{\La'}$ has \emph{$p$-adic Hodge type $\leq \mu$} if $\fM_{\La'}$ has $p$-adic Hodge type $\mu'$, for some $\mu'$ such that $[\mu'] \leq [\mu]$ in the Bruhat ordering.  
 \end{defn}

\begin{cor}\label{LoverK} Let $F'/F$ be a finite extension and let $\fM_{\La'}$ be as above. Write $\mu =(\mu_j)_{j\in \Z/f\Z}$, where each $\mu_j$ is a geometric cocharacter of $\mathrm{Res}_{K/K_0}GL_n$. Given $\mu_j$, let $V_{\mu_j}$ be the filtered $K \otimes_{K_0, \sigma_j} F$-module of rank $n$ corresponding to it as above. Then $\fM_{\La'}$ has $p$-adic Hodge type $\mu = (\mu_j)_{j \in \Z/f\Z}$ if and only if 
$$
\gr^{\bullet}(\cD_{F', \chi_{s_j(n)}}) \cong \gr^{\bullet} (V_{\mu_j}) 
$$
for every $0\leq j\leq f-1$.
\end{cor} 
\begin{proof} This follows directly from Lemma \ref{modEured}. 
\end{proof} 
 
Let $\fM_R \in Y^{[a,b],  \tau}(R)$.  For any finite extension $F'/F$, any homomorphism $x:R \ra F'$ factors through the ring of integers $\La'$.  We can consider the base change $\fM_x := (\fM_R \otimes_{R, x} \La')[1/p]$ for which we have defined the notion of $p$-adic Hodge type. 

We would now like to characterize when $\fM_R \in Y^{[a,b], \tau}(R)$ lies in $Y^{\mu, \tau}(R)$.   
 
\begin{thm}\label{p-adic Hodge type} Let $R$ be a complete local Noetherian $\La$-algebra with finite residue field.  Assume $R$ is $\La$-flat and reduced.  Then $\fM_R \in Y^{[a,b], \tau}(R)$ lies in $Y^{\mu, \tau}$ if and only if for all finite extensions $F'/F$ and all homomorphisms $x:R \ra F'$ the base change $\fM_x$ has $p$-adic Hodge type $\leq \mu$.
\end{thm}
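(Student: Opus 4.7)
My strategy is to pull the question back through the local model diagram~(\ref{lmdiagram}) to a question about $M(\mu)$, and then to exploit the fact that $M(\mu)$ is the flat closure of its generic fiber $M(\mu)_F$.

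\textbf{Step~1 (reduction to the local model).} By Proposition~\ref{mudescent}, $\widetilde Y^{\mu,\tau}_r$ is cut out inside $\widetilde Y^{[a,b],\tau}_r$ as the preimage of $M(\mu)$ under $\Psi^N$, and $Y^{\mu,\tau}_r$ is obtained from it by descent along the torsor $\pi^{(N)}$. Since an eigenbasis for $\fM_R$ exists Zariski-locally on $\Spec R$ (as observed after Definition~\ref{eigenbasis}), both the condition $\fM_R\in Y^{\mu,\tau}(R)$ and the condition of having $p$-adic Hodge type $\leq\mu$ at every $F'$-point are Zariski-local on $\Spec R$. I may therefore assume a lift $\widetilde\fM_R\in\widetilde Y^{[a,b],\tau}(R)$ exists, whence $\fM_R\in Y^{\mu,\tau}(R)$ if and only if $\Psi^N(\widetilde\fM_R)\in M(\mu)(R)$.

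\textbf{Step~2 (from $R$ to closed points of $\Spec R[1/p]$).} By Definition~\ref{defnlocmodel} and Theorem~\ref{locmodels}, $M(\mu)$ is the $\La$-flat closure of its reduced generic fiber $M(\mu)_F$ inside $\Fl_K^{[a,b],E(u)}$. Since $R$ is $\La$-flat and reduced, a morphism $\Spec R\to\Fl_K^{[a,b],E(u)}$ factors through $M(\mu)$ if and only if its restriction $\Spec R[1/p]\to\Fl_K^{[a,b],E(u)}[1/p]$ factors through $M(\mu)_F$. By a theorem of de Jong, $R[1/p]$ is Jacobson and its closed points have residue fields finite over $F$; combined with reducedness this gives an injection $R[1/p]\hookrightarrow\prod_x F'_x$, the product running over closed points. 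Hence the factorization can be tested separately at each $x\colon R\to F'$.

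\textbf{Step~3 (Bruhat position vs.\ Hodge type).} It remains to show that for $F'/F$ finite and $\fM_{\La'}\in Y^{[a,b],\tau}(\La')$, the image $\Psi^N(\fM_{F'})$ lies in $M(\mu)(F')=\prod_j\bigl(1_{\GL_n/P_j}\times S(\mu_j)\bigr)(F')$ if and only if $\fM_{F'}$ has $p$-adic Hodge type $\leq\mu$. The flag component always equals $1_{\GL_n/P_j}$: by construction (\ref{descfilt}), after choosing an eigenbasis the filtration on $L^{(j)}/uL^{(j)}$ is the $\Delta$-character filtration on $\tau$ in the ordering $s_j$, which coincides with the standard filtration stabilized by $P_j$. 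The Grassmannian component is the pair $({}^{\phz}\fM^{(j-1)}_{F',\chi_{s_j(n)}},\alpha^{(j)})$; base-changing to $\overline F'$ decomposes it according to embeddings $\psi\colon K\hookrightarrow\overline F'$ extending $\sigma_j$, and its Bruhat position at each $\psi$ is encoded in the elementary divisors of $\phi^{(j-1)}_{F',\chi_{s_j(n)}}$ at the local completion $\overline F'[\![u-\psi(\pi_K)]\!]$. These are the same integers as the jumps of Kisin's filtration $\Fil^{\bullet}(\cD^{(j)}_{F',\chi_{s_j(n)}})$ at $\psi$ (cf.~\cite{KisinPSS}). Combining this identification with Corollary~\ref{LoverK} and the description $S(\mu_j)_{\overline F'}=\bigcup_{\mu'_j\leq\mu_j}S^\circ(\mu'_j)$ yields the desired equivalence.

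The main obstacle is Step~3: the explicit identification of the Bruhat stratum containing the Frobenius-image lattice with the $p$-adic Hodge type defined via $\cD_{F'}$. The underlying comparison is essentially Kisin's, but one must carefully track the interaction between the $\chi$-isotypic decomposition forced by the tame descent data and the decomposition of the affine Grassmannian over $\overline F'$ indexed by the embeddings $\psi$.
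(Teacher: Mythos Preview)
Your overall strategy matches the paper's proof: reduce via the local model diagram to a factorization question through $M(\mu)$, use $\La$-flatness of $M(\mu)$ together with reducedness and the Jacobson property of $R[1/p]$ to reduce to closed points, then compare the Schubert position of the Frobenius-image lattice with the Hodge filtration. Steps~1 and~2 are essentially identical to the paper's argument (the paper lifts the eigenbasis from the residue field via smoothness of $\pi^{(N)}$ rather than Zariski-localizing, but since $R$ is local these amount to the same thing).

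There is, however, a gap in the flag-component part of your Step~3. You assert that the filtration $\eps^{(j)}$ on $L^{(j)}/uL^{(j)}$ ``coincides with the standard filtration stabilized by $P_j$'', but the $\GL_n/P_j$-component in the generic fiber of $\Fl_{P_j}^{E_j(u)}$ is computed by transporting $\eps^{(j)}$ along the trivialization $\alpha^{(j)}\bmod u=(\gamma^{(j)}\circ\phi^{(j-1)}_{F',\chi_{s_j(n)}})\bmod u$, \emph{not} along the trivialization coming from the eigenbasis on the source ${}^{\phz}\fM^{(j-1)}$. Your sentence only establishes that the latter, source-side, trivialization sends $\eps^{(j)}$ to the standard filtration. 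What is missing is the verification that $\phi^{(j-1)}\bmod u$ carries $\Fil^i({}^{\phz}D^{(j-1)}_{\chi_{s_j(n)}})=\omega_i({}^{\phz}\fM^{(j-1)}_{\chi_{s_j(i)}})/(u)$ to $\omega'_i(\fM^{(j)}_{\chi_{s_j(i)}})/(u)$ inside $D^{(j)}_{\chi_{s_j(n)}}$; this is exactly the commutativity of diagram~(\ref{bigdiagram}), which the paper invokes explicitly at this point. Once that is in place, $\gamma^{(j)}$ does send the resulting filtration to the standard one by construction. For the Grassmannian component your elementary-divisors-equals-filtration-jumps identification is correct; the paper makes it explicit via the Bialynicki--Birula retraction $S^{\circ}(\lambda)\to\GL_n/P_{\lambda}$ rather than by citation to~\cite{KisinPSS}, but the content is the same.
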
 
\begin{proof}
Let $N>a-b$. Choose an eigenbasis $\widetilde{z}_1:=\left(\overline{\beta}^{(j)} \right)_{j \in \Z/f\Z}$ for $\fM_{R, 1} \in Y^{[a,b], \tau}(R/m_R)$.  Since the morphism $\pi^{(N)}: \widetilde{Y}^{[a,b], \tau}\to Y^{[a,b], \tau}$ is smooth, we can find a compatible system of points $\widetilde{z}:=\left(\widetilde{z}_r\right)_r$ with $\widetilde{z}_r \in \widetilde{Y}^{[a,b], \tau, (N)}(R/m_R^r)$ such that $\pi^{(N)}(\widetilde{z}_r) = \fM_{R, r}$.     

We see then that $\fM_R$ is in $Y^{\mu, \tau}(R)$ if and only if $\Psi^N(\widetilde{z}_r) \in M(\mu)(R/m_R^r)$ for all $r \geq 1$.  The compatible system $\Psi^N(\widetilde{z}_r)$ defines a map
$$
\Psi^N(\widetilde{z}): \Spec R \ra \Fl_K^{[a,b], E(u)}.  
$$
Since $M(\mu)$ is a $\La$-flat closed subscheme of $\Fl_K^{[a,b], E(u)}$, we see that $\Psi^N(\widetilde{z})$ factors through $M(\mu)$ if and only if we have a factorization
$$
\xymatrix{
\Spec R[1/p] \ar@{.>}[rd] \ar[r]^{\Psi^N(\widetilde{z})[1/p]} & \Fl_K^{[a,b], E(u)} \\
 & M(\mu)[1/p] = \prod_j (1_{\GL_n/P_j} \times S(\mu_j)). \ar@{^{(}->}[u]  \\
}
$$ 
Since $R[1/p]$ is reduced and Jacobson, it suffices to show that we have a factorization at the level of $\overline{\Q}_p$-points. 

Every eigenbasis modulo $P(v)^N$ lifts to an eigenbasis, so we can switch from considering $\Psi^N:\widetilde{Y}^{[a,b],\tau} \to \Fl_K^{[a,b], E(u)}$ to considering $\Psi: \widetilde{Y}^{[a,b],\tau, (\infty)} \to \Fl_K^{[a,b], E(u)}$. We are reduced then to showing that for any $x: R \ra F'$, $\fM_x$ has $p$-adic Hodge type $\leq \mu$ if and only if for any choice of eigenbasis $(\beta^{(j)})$ the corresponding $F'$-point $\Psi(x)$ of $\Fl_K^{[a,b], E(u)}$ lies in $\prod_j (1_{\GL_n/P_j} \times S(\mu_j))$.  We can enlarge the field if necessary so that $F'$ contains a splitting field for $K/\Qp$.  This ensures that the generic fiber of $\Fl_K^{[a,b], E(u)}$ becomes a product over the embeddings $\psi:K \iarrow F'$.   

We first show that the projection to $\GL_n/P_j$ is the identity point.   Consider the Frobenius map
$$
\phi^{(j-1)}_{x, \chi_{s_{j}(n)}}:^{\phz} \fM^{(j-1)}_{x, \chi_{s_{j}(n)}}[1/E_j(u)] \ra \fM^{(j)}_{x, \chi_{s_{j}(n)}}[1/E_j(u)]
$$
which is a map of modules over $(\La'[\![u]\!])[1/p, 1/E_j(u)]$.  Since $p$ is inverted, reduction mod $u$ induces an isomorphism 
$$
^{\phz} \fM^{(j-1)}_{x, \chi_{s_{j}(n)}} \mod u \xrightarrow{\sim} \fM^{(j)}_{x, \chi_{s_{j}(n)}} \mod u.
$$
For a choice of eigenbasis $\beta^{(j)} = \left( f^{(j)}_i \right)$, we would like to show that the image of the filtration on $^{\phz} \fM^{(j-1)}_{x, \chi_{s_{j}(n)}} \mod u$ is the canonical filtration on $\fM^{(j)}_{x, \chi_{s_{j}(n)}} \mod u$ induced by the trivialization (i.e., induced by the eigenbasis).  Concretely, this comes down to the fact that 
$$
\phi^{(j-1)}_{x, \chi_{s_{j}(n)}}(u^{a^{(j)}_{s_j(n)} - a^{(j)}_{s_j(i)}} \otimes f_{s_j(i)}^{(j)} ) \in \Fil^i(\fM^{(j)}_{x, \chi_{s_{j}(n)}} \mod u)
$$
which is equivalent to the commutativity of the (\ref{bigdiagram}).  This shows that $\Psi(x) \in \prod_j 1_{\GL_n/P_j} \times \Gr_{\Res_{(K \otimes_{K_0} F)/F} \GL_n}(F')$. 

By twisting by some power of $E(u)$, we can now reduce to the case of $[a,b] = [0,h]$. Fix an embedding $\sigma_j:K_0 \ra F'$.  We have that $S(\mu_j) = \prod_{\psi:K \ra F'} S(\mu_{j, \psi})$, where the product is over all embeddings $\psi:K\iarrow F'$ which extend $\sigma_j$.  Fix such an embedding $\psi$ and let $\pi_{\psi} := \psi(\pi_K)$. We write $F'[\![u - \pi_{\psi}]\!]$ for the completion of $(\La'[\![u]\!])[1/p]$ at $u - \pi_{\psi}$. 

Let $\Psi(x)_{\psi}$ denote the projection onto the $\Gr_{\GL_n}$ factor corresponding to the embedding $\psi$.  We let $\mathcal{L}_0$ be the standard $F'[\![u - \pi_{\psi}]\!]$-lattice in $\left(F'(\!(u - \pi_{\psi})\!)\right)^n$ corresponding to the image of $\fM_{x,\chi_{s_j(n)}}^{(j)}$ under the trivialization $\gamma^{(j)}$. Then $\Psi(x)_{\psi} \in S(\mu_{j, \psi})$ if and only if the lattice $\mathcal{L}$ given by the $(u - \pi_{\psi})$-adic completion of  
$$
\gamma^{(j)} \circ \phi^{(j-1)}_{x, s_j(n)}\left(^{\phz} \fM^{(j-1)}_{x, \chi_{s_j(n)}} \right) \subset ((\La'[\![u]\!])[1/p])^n 
$$
has relative position less than or equal to $\mu$ relative to $\mathcal{L}_0$. (We note that $\mathcal{L}$ is contained in the standard lattice $\mathcal{L}_0$ by our assumption that $[a,b] = [0,h]$.)  

The question is essentially reduced to one about open Schubert cells in the affine Grassmannian of $F'[\![u - \pi_{\psi}]\!]$-lattices in $\left(F'(\!(u - \pi_{\psi})\!)\right)^n$. We identify an element in $\Gr_{GL_n}$ with such a lattice via the standard lattice $\mathcal{L}_0$. For a cocharacter $\lambda$ of $GL_n$, the Bialynicki-Birula decomposition~\cite{BBdecomp} gives a retraction from the open Schubert cell $S^\circ(\lambda)\subset \Gr_{GL_n}$ to the flag variety $GL_n/P_\lambda$, where 
$$
P_\lambda:= \left\{g\in GL_n |\lim_{(u-\pi_\psi)\to 0}(u-\pi_\psi)^{-\lambda}g(u-\pi_\psi)^\lambda\ \mathrm{exists}\ \right \}.
$$ More precisely, the retraction $S^\circ(\lambda)\to GL_n/P_\lambda$ is induced by the evaluation at $(u-\pi_\psi)=0$ on any element of the $GL_n[\![u-\pi_\psi]\!]$-orbit of $(u-\pi_\psi)^\lambda$. The fact that the evaluation at $0$ map sends $S^\circ(\lambda)$ to $GL_n/P_\lambda$ can be deduced from Lemma 2.3 of~\cite{ngopolo}, where the isotropy group for the orbit of $(u-\pi_\psi)^\lambda$ in the affine Grassmannian is expressed as a semidirect product of $P_\lambda$ and a pro-unipotent algebraic group. 
Moreover, sending a lattice $\mathcal{L}$ to the fitration on $\mathcal{L}/(u-\pi_\psi)\mathcal{L}$ defined by 
$$
\mathrm{Fil}^i(\mathcal{L}/(u-\pi_\psi)\mathcal{L}):=(\mathcal{L}\cap (u-\pi_\psi)^{i}\cdot \mathcal{L}_0)/(u-\pi_\psi)\mathcal{L}
$$
corresponds to the evaluation at $0$ map which sends the open Schubert cell $S^\circ(\lambda)$ to the flag variety $GL_n/P_\lambda$ parametrizing filtrations of type $\lambda$. This can be checked by a direct computation. In particular, if $\mathcal{L}$ corresponds to a point on $S^\circ(\lambda)$, then the filtration $\mathrm{Fil}^{\cdot}(\mathcal{L}/(u-\pi_\psi)\mathcal{L})$ is of type $\lambda$. 

Let $\mathcal{L}$ be the lattice corresponding to $\Psi(x)_{\psi}$. Since
$$
S(\mu_{j,\psi})_{F'}=\sqcup_{\lambda\leq \mu_{j,\psi}}S^\circ(\lambda)_{F'},
$$ 
as we have assumed that $F'$ is large enough, the statement that $\Psi(x)_{\psi}\in S(\mu_{j,\psi})$ is equivalent to the filtration $\mathrm{Fil}^{\cdot}(\mathcal{L}/(u-\pi_\psi)\mathcal{L})$ being of type $\leq \mu_{j, \psi}$. Indeed, $\Psi(x)_{\psi}\in S(\mu_{j,\psi})$ if and only if $\Psi(x)_{\psi}\in S^\circ(\lambda)$ for some cocharacter $\lambda\leq \mu$.  Then $\Psi(x)_{\psi}\in S^\circ(\lambda)$ for some cocharacter $\lambda$ if and only if $\mathrm{Fil}^{\cdot}(\mathcal{L}/(u-\pi_\psi)\mathcal{L})$ is of type $\lambda$. (The if part follows after knowing the only if part for every cocharacter $\lambda$, since the Cartan decomposition tells us that $\Psi(x)_{\psi}$ belongs to some open Schubert cell. The only if part is explained above.)

We are now ready to conclude. Recall that we've set $\cD_{x, \chi_{s_j(n)}} = ^{\phz} \fM^{(j-1)}_{x, s_j(n)}/ E_j(u) ^{\phz} \fM^{(j-1)}_{x, s_j(n)}$. Then $\cD_{x, \chi_{s_j(n)}}$ is isomorphic to the product over those $\psi$ extending $j$ of $\left(\mathcal{L}/(u-\pi_\psi)\mathcal{L}\right)$. This is an isomorphism of filtered $K\otimes_{K_0,\sigma_j}F'$-modules. We see then that $\Psi(x) \in \prod_j 1_{\GL_n/P_j} \times S(\mu_j)$ if and only if $\gr^{\bullet}(\cD_{x, \chi_{s_j(n)}}) \cong V_{\mu'_j} \otimes_F F'$ for some $\mu'_j\leq \mu_j$. By Corollary~\ref{LoverK}, this is equivalent to $\fM_x$ having $p$-adic Hodge type $\leq \mu$.  
\end{proof}

\begin{rmk} When working with the moduli of Kisin modules, one is forced to impose the condition $\leq \mu$ rather than asking for a constant $p$-adic Hodge type $\mu$. On the other hand, Kisin shows in~\cite[Corollary 2.6.2]{KisinPSS} that, for the family of Kisin modules living over the generic fiber of a semistable Galois deformation ring, the $p$-adic Hodge type is locally constant.  However, Kisin's proof that the $p$-adic Hodge type is locally constant uses the comparison with $D_{\mathrm{dR}}$ (see also~\cite[(A.4)]{KisinFM}, where the proof of ~\cite[Corollary 2.6.2]{KisinPSS} is corrected). In particular, the proof relies on the fact that the family of Kisin modules comes from a family of $G_K$-representations, rather than $G_{K_\infty}$-representations.  For general families of finite height Kisin modules over a complete local ring $R$ as above, the $p$-adic Hodge type need not be locally constant on $\Spec R[1/p]$.
\end{rmk}





\subsection{Connections to Galois representations}\label{connection to Galois}

In this subsection, we record two connections to Galois representations in the spirit of  \cite{KisinPSS} and \cite{MFFGS}. This essentially comes down to adding descent datum to the constructions of Kisin in loc. cit. 

Let $R$ be a complete local $\Zp$-algebra. Fix a compatible system of $p$-power roots $\{ \pi_L^{\frac{1}{p}}, \pi_L^{\frac{1}{p^2}}, \ldots  \}$ and let $L_{\infty}$ denote the completion of $L(\pi_L^{\frac{1}{p}}, \pi_L^{\frac{1}{p^2}}, \ldots)$.  We define $K_{\infty}$ to be the completion of the field obtained by adjoining the compatible system of $p$-power roots of $\pi_K$ given by $\{ \pi_L^{\frac{e}{p}}, \pi_L^{\frac{e}{p^2}}, \ldots  \}$. Note that $L_{\infty}$ is Galois over $K_{\infty}$ with $\Gal(L_{\infty}/K_{\infty}) \cong \Gal(L/K) = \Delta$. 

\begin{defn} Let $\cO_{\cE, L}$ be the $p$-adic completion of $(W[\![v]\!])[1/v]$ equipped with Frobenius and an action of $\Delta$ in the natural way. An \emph{\'etale $\phz$-module} over $R$ with descent datum is a finite free $R \widehat{\otimes}_{\Zp} \cO_{\cE, L}$ module $\cM$ equipped with an Frobenius isomorphism $\phi_{\cM}:\phz^*(\cM) \cong \cM$ and a semilinear action $\{ \widehat{g} \}$ of $\Delta$ such that $\phi_{\cM}$ and $\widehat{g}$ commute for all $g \in \Delta$. 
\end{defn}

\begin{prop} There is a functor $\underline{M}_{dd}$ from the category of continuous representations of $G_{K_{\infty}} := \Gal(\overline{K}/K_{\infty})$ on finite free $R$-modules to the category of \'etale $\phz$-modules over $R$ with descent datum. This functor is an equivalence of categories with quasi-inverse $T_{dd}$.  
\end{prop}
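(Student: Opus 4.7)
The plan is to bootstrap from Fontaine's classical equivalence for $G_{L_\infty}$ without descent data and then encode the descent datum via the identification $\Gal(L_\infty/K_\infty) \cong \Delta$. With $R$-coefficients (as developed in~\cite{MFFGS}), there is already an equivalence
\[
\underline{M}_L : \mathrm{Rep}_R^{\mathrm{free}}(G_{L_\infty}) \lrisom \{ \text{\'etale } \phz\text{-modules over } R \otimes_{\Zp} \cO_{\cE, L} \}
\]
with quasi-inverse $T_L$. My strategy is to show that refining a $G_{L_\infty}$-action on $V$ to a $G_{K_\infty}$-action translates, under $\underline{M}_L$, to equipping $\underline{M}_L(V)$ with descent datum in the sense defined above.

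The key observation is that $L_\infty/K_\infty$ is Galois with group canonically isomorphic to $\Delta$, so a continuous $G_{K_\infty}$-representation on a finite free $R$-module $V$ is the same data as a continuous $G_{L_\infty}$-representation on $V$ together with a compatible semilinear action of $\Delta$. Applying $\underline{M}_L$, the $\Delta$-action on $V$ produces additive bijections $\widehat{g}:\underline{M}_L(V) \to \underline{M}_L(V)$ for each $g \in \Delta$. Because the $\Delta$- and $G_{L_\infty}$-actions on $V$ commute, and because the Frobenius on $\underline{M}_L(V)$ is induced from a Galois-equivariant Frobenius on the period ring, each $\widehat{g}$ commutes with $\phi_{\underline{M}_L(V)}$; they also plainly satisfy $\widehat{g_1 g_2} = \widehat{g_1}\widehat{g_2}$. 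Set $\underline{M}_{dd}(V) := (\underline{M}_L(V), \{\widehat{g}\})$. In the reverse direction, given an \'etale $\phz$-module $\cM$ with descent datum, apply $T_L$ to the underlying module and transport the $\widehat{g}$ to a semilinear $\Delta$-action on $T_L(\cM)$; this action together with the $G_{L_\infty}$-action extends to a $G_{K_\infty}$-action, defining $T_{dd}(\cM)$. That $\underline{M}_{dd}$ and $T_{dd}$ are quasi-inverse is then inherited from the corresponding statement for $\underline{M}_L$ and $T_L$.

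The one nontrivial verification is that the $\widehat{g}$ produced this way are semilinear with respect to $(W \otimes_{\Zp} R)[\![v]\!]$ via the formula $v \mapsto \omega_f(g)v$ prescribed in Definition~\ref{action}. This follows from the Fontaine--Wintenberger identification of $v$ with the norm-compatible system $(\pi_L^{1/p^n})_n$ in the tilt of $\cO_{\overline{K}}$: a lift of $g \in \Delta$ to $G_{L_\infty}$ sends each $\pi_L^{1/p^n}$ to a root of unity times $\pi_L^{1/p^n}$, and these constants assemble in the tilt into the Teichm\"uller lift of $g(\pi_L)/\pi_L = \omega_f(g) \in W(k)^\times$. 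The main obstacle, though mild, is simply keeping the semilinearity conventions consistent throughout this dictionary; once that bookkeeping is done the equivalence reduces mechanically to Fontaine's statement in the no-descent case.
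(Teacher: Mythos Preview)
Your approach is essentially the same as the paper's, which simply invokes Fontaine--Wintenberger over $L_\infty$ (with $R$-coefficients as in \cite[Lemma 1.2.7]{MFFGS}) and remarks that adding the descent datum is straightforward, referring to \cite[\S 2.1.3]{CDM1} for details. One imprecision worth flagging: you write that ``the $\Delta$- and $G_{L_\infty}$-actions on $V$ commute,'' but there is no genuine $\Delta$-action on $V$ itself, only a $G_{K_\infty}$-action, and lifts of elements of $\Delta$ to $G_{K_\infty}$ need not commute with $G_{L_\infty}$. The correct statement is that for $g \in G_{K_\infty}$ the induced map on $\underline{M}_L(V)$ depends only on the image of $g$ in $\Delta$ (because $G_{L_\infty}$ acts trivially on its own invariants), and it commutes with $\phi$ because Frobenius on the period ring is equivariant for the full $G_{K_\infty}$-action. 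With that adjustment, your argument goes through.
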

\begin{proof} The main content is the equivalence given by the theory of norm fields over $L_{\infty}$ due to Fontaine-Wintenberger (with coefficients \cite[Lemma 1.2.7]{MFFGS}). The addition of descent datum is straightforward (see \cite[\S 2.1.3]{CDM1} for details). 
\end{proof} 

Let $F'$ be a finite extension of $F$ with ring of integers $\Lambda'$. Let $V_{F'}$ be a potentially semistable representation of $G_K$ with Galois type $\tau$ and $p$-adic Hodge type $\mu$.

\begin{prop} \label{latticewithdescent} Let $T_{\La'}$ denote a $G_K$-stable lattice in $V_{F'}$.  Then there exists $\fM_{\La'} \in Y^{\mu, \tau}(\La')$ such that 
$$
\fM_{\La'} \otimes_{W[\![v]\!]} \cO_{\cE, L} \cong \underline{M}_{dd}(T_{\La'}|_{G_{K_{\infty}}})
$$
\end{prop}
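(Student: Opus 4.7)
The plan is to apply Kisin's theorem to the restriction $V_{F'}|_{G_L}$, which is semistable, in order to produce a Breuil-Kisin module $\fM_{\Lambda'}$, and then to equip it with descent data by functoriality of the $\Delta = \Gal(L_\infty/K_\infty)$-action coming from $G_K$. Most of the work then consists in matching the tame type and $p$-adic Hodge type of $\fM_{\Lambda'}$ to those of $V_{F'}$.

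Since $V_{F'}$ has tame Galois type $\tau$, which factors through $\Delta$, the restriction $V_{F'}|_{G_L}$ is semistable with the same Hodge-Tate weights. The element $\pi_L$ is a uniformizer of $L$ with Eisenstein polynomial $P(v)\in\Zp[v]$. Applying the $\Lambda'$-coefficient version of Kisin's theorem \cite{Fcrystals, KisinPSS, MFFGS} to the $G_L$-stable lattice $T_{\Lambda'}|_{G_L}$ produces a finite-height Breuil-Kisin module $\fM_{\Lambda'}$ over $(W\otimes_{\Zp}\Lambda')[\![v]\!]$ together with a canonical isomorphism
\[
\fM_{\Lambda'} \otimes_{W[\![v]\!]} \cO_{\cE,L} \cong \underline{M}(T_{\Lambda'}|_{G_{L_\infty}}),
\]
where $\underline{M}$ denotes the analog of $\underline{M}_{dd}$ for $G_{L_\infty}$-representations (i.e., without descent data).

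Because $G_{L_\infty}$ is normal in $G_{K_\infty}$ with quotient $\Delta$, the \'etale $\phz$-module $\underline{M}(T_{\Lambda'}|_{G_{L_\infty}})$ carries a natural $\Delta$-action, semilinear with respect to the automorphism $v\mapsto \omega_f(\cdot) v$ and commuting with Frobenius; this action makes it isomorphic to $\underline{M}_{dd}(T_{\Lambda'}|_{G_{K_\infty}})$. By functoriality of Kisin's equivalence, the $\Delta$-action preserves $\fM_{\Lambda'}$ inside the ambient \'etale $\phz$-module, giving it a semilinear descent datum $\{\widehat g\}$ commuting with $\phi_{\fM_{\Lambda'}}$. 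To see that the resulting descent datum has type $\tau$, one identifies (via Kisin's comparison with Breuil's filtered $(\phz, N)$-modules) $\fM_{\Lambda'}/v\fM_{\Lambda'}[1/p]$ with a suitable summand of $D_{\mathrm{pst}}(V_{F'})\otimes_{K_0}L_0$, on which the inertial action of $\Delta$ is precisely $\tau$ by the definition of the Galois type.

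Finally, to verify $\fM_{\Lambda'}\in Y^{\mu,\tau}(\Lambda')$, we apply Theorem~\ref{p-adic Hodge type}: since $\Lambda'$ is a flat reduced local $\La$-algebra, it suffices to check that the generic fiber $\fM_{F'}$ has $p$-adic Hodge type $\leq\mu$. This reduces to Kisin's identification of the filtered $L\otimes_{\Qp}F'$-module $\cD_{F'}$ of Proposition~\ref{modPvred} (with the filtration of Definition~\ref{KisinHodge}) with $D_{\mathrm{dR}}(V_{F'})\otimes_K L$; combined with Corollary~\ref{LoverK}, this translates the $p$-adic Hodge type assumption on $V_{F'}$ into the required condition on the $\cD^{(j)}_{F', s_j(n)}$. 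The main technical point is precisely this compatibility of Kisin's filtered module identification with the descent data: one must verify that the $\chi_{s_j(n)}$-isotypic decomposition of $\cD_{F'}$ is as described in Lemma~\ref{modEured}, after which the proposition follows formally.
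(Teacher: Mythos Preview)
Your proposal is correct and follows essentially the same route as the paper: apply Kisin's theorem to $T_{\La'}|_{G_L}$ to get $\fM_{\La'}$, transport the $\Delta$-action from the ambient \'etale $\phz$-module, read off the type $\tau$ from the comparison $(\fM_{\La'}/v\fM_{\La'})[1/p]\cong D_{\mathrm{st}}(V_{F'}|_{G_L})$, and read off the $p$-adic Hodge type from the comparison $\phz^*(\fM_{F'})/P(v)\cong D_{\dR}^*(V_{F'})$. The one place where the paper is sharper is your appeal to ``functoriality of Kisin's equivalence'' to see that $\Delta$ preserves $\fM_{\La'}$: Kisin's functor is only fully faithful, not an equivalence, and the paper instead invokes the \emph{uniqueness} of the finite-height lattice \cite[Lemma~2.1.6]{Fcrystals}, which immediately gives $\widehat{g}(\fM_{\La'})=\fM_{\La'}$ for each $g\in\Delta$.
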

\begin{proof} Without descent datum, this is due to Kisin (see Corollary 1.3.15 and Proposition 2.1.5 in \cite{Fcrystals}). We briefly explain how to extend the result to include decent datum. Let $\cM_{\La'} = \underline{M}_{dd}(T_{\La'}|_{G_{K_{\infty}}})$.   

Applying Kisin's results to $T_{\La'}\mid_{G_L}$, we get a finite height lattice $\fM_{\La'} \subset \cM_{\La'}$. The fact that $\fM_{\La'}$ inherits a semilinear action of $\Delta$ from $\cM_{\La'}$ follows from the uniqueness of $\fM_{\La'}$ (\cite[Lemma 2.1.6]{Fcrystals}). The fact that $\fM_{\La'}$ has \emph{type} $\tau$ follows from the $\Gal(L/K)$-equivariance of the identification
$$
(\fM_{\La'}/v \fM_{\La'})[1/p] \cong D_{st}(T_{\La'}[1/p]|_{G_L}) 
$$
from \cite[\S 2.5(1)]{KisinPSS}.

Finally, $\fM_{\La'}[1/p]$ has $p$-adic Hodge type $\mu$ via the identification
$$
\phz^*(\fM_{\La'}[1/p])/P(v) \phz^*(\fM_{\La'}[1/p]) \cong D_{\dR}^*(V_{F'})
$$
from the proof of \cite[Corollary 2.6.2]{KisinPSS}.    
\end{proof}

Let $\rho:\Gal(\overline{K}/K) \ra \GL_n(\La')$ be a lattice in a potentially crystalline representation of $\Gal(\overline{K}/K)$ with Galois type $\tau$ and $p$-adic Hodge type $\mu$.  Let $\rhobar:\Gal(\overline{K}/K) \ra \GL_n(\F')$ denote the reduction of $\rho$ modulo the maximal ideal of $\La'$.    

\begin{cor}  Let $\rhobar$ be as above.   Then there exists $\overline{\fM} \in Y^{\mu, \tau}(\F')$ such that 
$$
T_{dd}(\overline{\fM}) \cong \rhobar|_{\Gal(\overline{K}/K_{\infty})}.
$$
\end{cor}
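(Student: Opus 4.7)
The strategy is to produce $\overline{\fM}$ as the reduction modulo the maximal ideal of a Kisin module with descent datum supplied by Proposition~\ref{latticewithdescent}, and then transport the isomorphism via the base-change compatibility of $\underline{M}_{dd}$.

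Concretely, view $\rho$ as a $G_K$-stable lattice $T_{\La'}$ in the potentially crystalline (and so \emph{a fortiori} potentially semistable) representation $V_{F'} := \rho \otimes_{\La'} F'$ of Galois type $\tau$ and $p$-adic Hodge type $\mu$. By Proposition~\ref{latticewithdescent} there exists $\fM_{\La'} \in Y^{\mu,\tau}(\La')$ with
\[
\fM_{\La'} \otimes_{W[\![v]\!]} \cO_{\cE, L} \cong \underline{M}_{dd}\bigl(\rho|_{G_{K_{\infty}}}\bigr).
\]
Set $\overline{\fM} := \fM_{\La'} \otimes_{\La'} \F'$. Since $Y^{\mu,\tau}$ is a stack on $\Nilp_{\La}$, base change along the quotient $\La' \twoheadrightarrow \F'$ defines an object of $Y^{\mu,\tau}(\F')$; equivalently, membership in the closed substack $Y^{\mu,\tau} \subset Y^{[a,b],\tau}$ is preserved by this base change by the construction in Proposition~\ref{mudescent}.

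It remains to identify $T_{dd}(\overline{\fM})$ with $\rhobar|_{\Gal(\overline K / K_{\infty})}$. Tensoring the displayed isomorphism with $\F'$ over $\La'$, and using that the functor $\fM \mapsto \fM \otimes_{W[\![v]\!]} \cO_{\cE,L}$ commutes with base change in the coefficient ring, gives
\[
\overline{\fM} \otimes_{W[\![v]\!]} \cO_{\cE,L} \cong \underline{M}_{dd}\bigl(\rho|_{G_{K_{\infty}}}\bigr) \otimes_{\La'} \F' \cong \underline{M}_{dd}\bigl(\rhobar|_{G_{K_{\infty}}}\bigr),
\]
where the second isomorphism uses that $\underline{M}_{dd}$ is compatible with reduction modulo the maximal ideal of $\La'$ (standard from the Fontaine--Wintenberger norm-field equivalence, with the descent datum preserved automatically since reducing commutes with the $\Delta$-action). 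Applying the quasi-inverse $T_{dd}$ yields the claimed isomorphism.

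The only nontrivial input is Proposition~\ref{latticewithdescent}; once that is granted, there is no real obstacle, only the mild bookkeeping of checking that both the $p$-adic Hodge type $\leq \mu$ condition (cut out by the flat closure $M(\mu)$) and the descent datum of type $\tau$ are preserved under the integral-to-residue-field specialization, both of which are immediate from the constructions in Section~5.
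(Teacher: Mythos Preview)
Your proposal is correct and is exactly the argument the paper has in mind: the corollary is stated without proof because it follows immediately from Proposition~\ref{latticewithdescent} by reducing $\fM_{\La'}$ modulo the maximal ideal of $\La'$ and using the evident base-change compatibility of $\underline{M}_{dd}$ and $T_{dd}$. Your added remarks about why the reduction stays in $Y^{\mu,\tau}$ and why the descent datum and type are preserved are the only bookkeeping needed, and they are handled correctly.
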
 

We end by considering resolutions of potentially crystalline deformations rings as in \cite{KisinPSS}.  Let $\rhobar:\Gal(\overline{K}/K) \ra \GL_n(\F)$ be a continuous representation.  Let $\mu \in (\Z^n)^{\Hom(K, \overline{\Q}_p)}$ be a cocharacter.  Let $R_{\rhobar}^{\mu, \tau, \cris}$ be the framed potentially crystalline deformation ring with $p$-adic Hodge type $\mu$ and Galois type $\tau$, as constructed by Kisin. 

Let $\mathfrak{m}_R$ denote the maximal ideal of $R_{\rhobar}^{\mu, \tau, \cris}$ and  let $\rho^{\mathrm{univ}}_d:\Gal(\overline{K}/K)  \ra \GL_n(R_{\rhobar}^{\mu, \tau, \cris}/\mathfrak{m}_R^d)$ be the reduction of the universal deformation. Set $\cM_{d} := \underline{M}_{dd}(\rho^{\mathrm{univ}}_d)$.  Define $Y^{\mu, \tau}_{\rhobar, d}$ to be the functor on $R_{\rhobar}^{\mu, \tau, \cris}/\mathfrak{m}_R^d$-algebras $B$ given by 
$$
Y^{\mu, \tau}_{\rhobar, d}(B) := \{ (\fM_B, \alpha) \mid \fM_B \in Y^{\mu, \tau}(B),  \alpha:\fM_B[1/u] \cong \cM_{d} \otimes_{\cO_{\cE, R_{\rhobar}^{\mu, \tau, \cris}/\mathfrak{m}_R^d}} (W \otimes_{\Zp} B)(\!(v)\!) \}
$$

\noindent The functors $Y^{\mu, \tau}_{\rhobar, d}$ are relatively represented by projective schemes over $R_{\rhobar}^{\mu, \tau, \cris}/\mathfrak{m}_R^d$ as subschemes of the affine Grassmannian for $\cM_{d}$ using the same argument as in~\cite{KisinPSS}.  By formal GAGA, there is a projective morphism 
$$
\Theta:Y^{\mu, \tau}_{\rhobar} \ra \Spec R_{\rhobar}^{\mu, \tau, \cris}
$$
reducing to $Y^{\mu, \tau}_{\rhobar, d}$ modulo $\mathfrak{m}_R^d$. 

\begin{thm} The projective morphism
$$
\Theta:Y^{\mu, \tau}_{\rhobar} \ra \Spec R_{\rhobar}^{\mu, \tau, \cris}
$$
is an isomorphism on generic fibers. 
\end{thm}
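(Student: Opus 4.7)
The plan is to imitate Kisin's strategy from \cite{KisinPSS}, by showing that $\Theta[1/p]$ is a proper monomorphism that is bijective on geometric points and then concluding from the fact that $R_{\rhobar}^{\mu,\tau,\cris}[1/p]$ is reduced. Properness is built in, since $Y^{\mu,\tau}_{\rhobar,d}$ is by construction a closed subscheme of a finite type closed subscheme of the relevant affine Grassmannian for $\cM_d$, and is in particular projective over $R_{\rhobar}^{\mu,\tau,\cris}/\mathfrak m_R^d$; formal GAGA then gives that $\Theta$ is projective, so $\Theta[1/p]$ is proper. Reducedness of $R_{\rhobar}^{\mu,\tau,\cris}[1/p]$ is a formal property of Kisin's potentially crystalline deformation ring, obtained via flat closure.

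For surjectivity on geometric points, I take a finite extension $F'/F$ with ring of integers $\La'$ and a morphism $x:R_{\rhobar}^{\mu,\tau,\cris}\to \La'$, corresponding to a potentially crystalline lift $\rho_x:G_K\to \GL_n(\La')$ of Hodge type $\mu$ and Galois type $\tau$. Proposition \ref{latticewithdescent} produces $\fM_x\in Y^{\mu,\tau}(\La')$ together with a canonical identification $\fM_x\otimes_{W[\![v]\!]}\cO_{\cE,L}\cong \underline{M}_{dd}(\rho_x|_{G_{K_\infty}})$. Since $\rho_x$ factors through $\rho^{\mathrm{univ}}_d\mod \mathfrak m_x^d$ for each $d$ (with $\mathfrak m_x$ the maximal ideal of $\La'$), functoriality of $\underline{M}_{dd}$ identifies the right hand side canonically with $\cM_d\otimes_{\cO_{\cE,R_{\rhobar}^{\mu,\tau,\cris}/\mathfrak m_R^d}}(W\otimes_{\Zp}\La'/\mathfrak m_x^d)(\!(v)\!)$ compatibly with $d$, and hence provides the required identification $\alpha_x$. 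This produces an $F'$-point of $Y^{\mu,\tau}_{\rhobar}[1/p]$ lifting $x$.

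For the monomorphism property, I take two pairs $(\fM_1,\alpha_1),(\fM_2,\alpha_2)\in Y^{\mu,\tau}_{\rhobar}[1/p](B)$ with the same image in $\Spec R_{\rhobar}^{\mu,\tau,\cris}[1/p](B)$. Then $\alpha_2^{-1}\circ \alpha_1$ is an isomorphism of étale $\varphi$-modules with descent datum $\fM_1[1/u]\cong \fM_2[1/u]$ over $B$. The crux is then that a finite height $\varphi$-stable $(W\otimes_{\Zp}B)[\![v]\!]$-lattice inside the étale $\varphi$-module attached to a potentially semistable $G_K$-representation is unique, so $\fM_1$ and $\fM_2$ coincide as submodules of the common étale $\varphi$-module; the descent datum transports automatically by its commutation with Frobenius. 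This is exactly the descent-datum analogue of \cite[Lemma 2.1.6]{Fcrystals} (already invoked in the proof of Proposition \ref{latticewithdescent}). A proper monomorphism between finite type schemes over $F$ is a closed immersion, and such a closed immersion which is surjective on geometric points into a reduced scheme is an isomorphism.

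The main obstacle will be the monomorphism step, and in particular ensuring that the uniqueness of the finite-height Kisin lattice inside the étale $\varphi$-module extends from $\overline{\Q}_p$-points to arbitrary $B$ as above, with the extra tame descent datum. This requires checking that the uniqueness statement behaves well in families (as opposed to just at closed points), but this is essentially the same argument as in \cite{KisinPSS}, applied after base change to $L$ for the Galois representation. Once one knows $\Theta[1/p]$ is injective on $B$-points for sufficiently many $B$, combined with properness and surjectivity on geometric points into a reduced target, the conclusion follows.
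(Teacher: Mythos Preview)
Your proposal is correct and follows essentially the same approach as the paper: the paper proves $\Theta[1/p]$ is a closed immersion by citing \cite[Proposition 1.6.4]{KisinPSS} and the uniqueness of finite height lattices once $p$ is inverted, and then obtains surjectivity from Proposition~\ref{latticewithdescent}, which is exactly your proper-monomorphism-plus-surjectivity argument unpacked. One small remark: for the monomorphism step the relevant uniqueness statement is the easier one \emph{after inverting $p$} (as the paper says), rather than the integral uniqueness of \cite[Lemma 2.1.6]{Fcrystals}; the latter is what is used in Proposition~\ref{latticewithdescent} to equip the lattice with descent datum, whereas here you only need that two $\phz$-stable $(W\otimes_{\Zp}B)[\![v]\!]$-lattices in the same \'etale $\phz$-module over an $F$-algebra $B$ coincide, which is the family version handled by Kisin's argument in \cite[Proposition 1.6.4]{KisinPSS}.
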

\begin{proof} 
The proof that $\Theta[1/p]$ is a closed immersion is the same argument as in \cite[Proposition 1.6.4]{KisinPSS} using uniqueness of finite height lattices when $p$ is inverted.  The fact that $\Theta[1/p]$ is an isomorphism is then a consequence of Proposition~\ref{latticewithdescent}.     
\end{proof}

\begin{cor}\label{forgetful map formally smooth}  If $\mu \in (\{0,1\}^n)^{\Hom(K, \overline{\Q}_p)}$, i.e., $R_{\rhobar}^{\mu, \tau, \cris}$ is a potentially Barsotti-Tate deformation ring, then the forgetful map $Y^{\mu, \tau}_{\rhobar} \ra Y^{\mu, \tau}$ is formally smooth.   
\end{cor}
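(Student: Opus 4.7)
The plan is to verify formal smoothness via the infinitesimal lifting criterion. Consider a square-zero surjection $B \twoheadrightarrow B/I$ of Artinian local $\La$-algebras with residue field a finite extension of $\F$, together with a $(B/I)$-point $(\fM_{B/I}, \alpha_{B/I}) \in Y^{\mu, \tau}_{\rhobar}(B/I)$ and a lift $\fM_B \in Y^{\mu, \tau}(B)$ of its underlying Kisin module. We must produce both an $R_{\rhobar}^{\mu, \tau, \cris}$-algebra structure on $B$ extending the one implicit on $B/I$, and a trivialization $\alpha_B$ of $\fM_B[1/u]$ lifting $\alpha_{B/I}$.

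First, I would apply the equivalence $T_{dd}$ to $\fM_B \otimes_{W[\![v]\!]} \cO_{\cE, L}$ to obtain a continuous $G_{K_{\infty}}$-representation $V_B$ on a finite free $B$-module of rank $n$, reducing modulo $I$ to $V_{B/I}$. Via $\alpha_{B/I}$, this $V_{B/I}$ is canonically identified with the restriction to $G_{K_{\infty}}$ of $\rho^{\mathrm{univ}}_d \otimes_R B/I$. The key input is then Kisin's rigidity theorem for Kisin modules of height $\leq 1$: the functor from such Kisin modules to Galois representations of $G_K$ is fully faithful with essential image the potentially Barsotti-Tate representations (cf.~\cite{MFFGS}; the extension to tame descent data follows the lines of~\cite[\S 2.1]{CDM1}). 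Since $\mu$ minuscule forces $\fM_B$ to have $P(v)$-height at most $1$, the $G_{K_{\infty}}$-representation $V_B$ extends uniquely to a continuous potentially Barsotti-Tate $G_K$-representation $\rho_B$ of type $\tau$ and Hodge type $\mu$; by uniqueness, $\rho_B$ reduces modulo $I$ to $\rho^{\mathrm{univ}}_d \otimes_R B/I$.

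By the universal property of $R_{\rhobar}^{\mu, \tau, \cris}$, the deformation $\rho_B$ equips $B$ with the required $R_{\rhobar}^{\mu, \tau, \cris}$-algebra structure lifting the one on $B/I$. Under this structure, Proposition~\ref{latticewithdescent} identifies the Kisin module associated with $\rho_B$ canonically with $\fM_B$, and in particular $\cM_{d'} \otimes_{\cO_{\cE, R_{\rhobar}^{\mu, \tau, \cris}/\mathfrak{m}_R^{d'}}} (W \otimes_{\Zp} B)(\!(v)\!)$ is identified with $\underline{M}_{dd}(\rho_B|_{G_{K_{\infty}}}) \cong \fM_B[1/u]$ for $d'$ sufficiently large. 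This identification provides the desired $\alpha_B$, and by construction it lifts $\alpha_{B/I}$.

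The main obstacle is verifying Kisin's rigidity theorem in the present setting, namely for height-$\leq 1$ Kisin modules \emph{with descent data} over Artinian coefficient rings. The descent-data variant requires only a mild extension of Kisin's arguments (which rest on the relationship between Kisin modules of height $\leq 1$ and $p$-divisible groups), but the Artinian setting must be checked carefully to ensure both existence and uniqueness of $\rho_B$ together with the prescribed $p$-adic Hodge type and tame type. The crucial role played by the minuscule condition is that in higher heights the Kisin module no longer determines the ambient Galois representation, so the trivialization $\alpha$ carries genuine moduli-theoretic information and the forgetful map fails to be smooth.
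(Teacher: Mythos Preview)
Your approach captures the correct underlying idea --- that for Kisin modules of height in $[0,1]$ the $G_{K_\infty}$-representation extends canonically to $G_K$, so that the only additional datum carried by $Y^{\mu,\tau}_{\rhobar}$ over $Y^{\mu,\tau}$ is a framing --- and in that sense it is close in spirit to the paper's argument. However, there is a genuine gap at the step where you invoke ``the universal property of $R_{\rhobar}^{\mu,\tau,\cris}$'' to produce the $R_{\rhobar}^{\mu,\tau,\cris}$-algebra structure on $B$. The ring $R_{\rhobar}^{\mu,\tau,\cris}$ is constructed by Kisin as a reduced $\La$-flat quotient of $R_{\rhobar}^{\square}$ characterized only by its $\overline{\Q}_p$-points; it does \emph{not} represent a moduli problem on Artinian $\La$-algebras. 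In particular, the phrases ``potentially Barsotti--Tate of type $\tau$ and Hodge type $\mu$'' have no direct meaning for a representation over an Artinian $B$, so producing a framed $\rho_B$ only yields a map $R_{\rhobar}^{\square}\to B$, and there is no a priori reason this factors through $R_{\rhobar}^{\mu,\tau,\cris}$. Your appeal to Proposition~\ref{latticewithdescent} has the same defect: that proposition is stated for finite flat $\La'$, not for Artinian coefficients.

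The paper circumvents this by working functorially rather than via the lifting criterion: it constructs, for any complete local Noetherian $R$, a canonical extension $\widetilde{T}_{dd}$ of $T_{dd}$ to $G_K$-representations, by passing through strongly divisible modules with tame descent (the uniqueness of the monodromy operator forces compatibility with the $\Delta$-action). Once $\widetilde{T}_{dd}$ exists, the datum $\alpha$ in the definition of $Y^{\mu,\tau}_{\rhobar}$ becomes exactly the choice of a framing on $\widetilde{T}_{dd}(\fM_B)$ compatible with the universal framed deformation, and one is reduced to the formal smoothness of ``choosing a basis,'' which is handled exactly as in \cite[Proposition~2.4.6]{KisinPSS}. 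The advantage of this packaging is that the $R_{\rhobar}^{\mu,\tau,\cris}$-structure on $B$ is never produced from a moduli property of that ring; rather, the comparison is made at the level of the universal object, where the issue of Artinian points does not arise.
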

\begin{proof} For $R$ a complete local Noetherian $\La$-algebra, the functor $T_{dd}$ on  $Y^{[0,1], \tau}(R)$ canonically extends to a functor $\widetilde{T}_{dd}$ valued in representations of $G_{K}$ (not just $G_{K_{\infty}}$) such that when $R$ is finite flat over $\La$ the representation is potentially crystalline. To construct $\widetilde{T}_{dd}$, one first associates to $\fM_R \in Y^{[0,1], \tau}(R)$ a strongly divisible module with tame descent as defined in \cite[Definition 7.3.1]{EGS}. The key point is that the monodromy operator is unique and so it commutes with the descent datum.  There is a functor $T_{st, L}$ from strongly divisible modules with tame descent to representations of $G_{K}$ \cite[\S 4]{Sav}.

Formal smoothness is a local property so consider $\overline{\fM} \in Y^{\mu, \tau}_{\rhobar}(\F')$ for $\F'/\F$ finite and the corresponding deformation groupoids $D^{\mu, \tau}_{ \rhobar, \overline{\fM}} \ra D^{\mu, \tau}_{\overline{\fM}}$ describing the local structure of $Y^{\mu, \tau}_{\rhobar}$ and $Y^{\mu, \tau}$ respectively.  As in diagram (2.4.7) of \cite{KisinPSS} , define $D^{\mu, \tau, \square}_{\overline{\fM}}$ to sit in the 2-Cartesian square
\[
\xymatrix{
D^{\mu, \tau, \square}_{\overline{\fM}} \ar[d]^{\widetilde{T}^{\square}_{dd}} \ar[r]^{f.s.} & D^{\mu, \tau}_{\overline{\fM}} \ar[d]^{\widetilde{T}_{dd}} \\
 \mathrm{Spf} \, R_{\rhobar}^{\square} \ar[r]^{f.s.} & D_{\rhobar}  \\
}
\]
where $R_{\rhobar}^{\square}$ is the unrestricted framed deformation ring and $D_{\rhobar}$ is the unrestricted deformation groupoid. (One could also use potentially Barsotti-Tate deformations in place of the unrestricted versions.) 

Finally, the same argument as in Proposition 2.4.8 of \emph{loc. cit.} shows that $D^{\mu, \tau, \square}_{\overline{\fM}} \cong D^{\mu, \tau}_{ \rhobar, \overline{\fM}}$. The key points are that $\widetilde{T}^{\square}_{dd}$ factors through the $\mathrm{Spf} \, R_{\rhobar}^{\mu, \tau, \cris} \subset \mathrm{Spf} \, R_{\rhobar}^{\square}$ and that $Y^{\mu, \tau}$ is flat over $\La$.        
\end{proof} 

\begin{cor} If $\mu \in (\{0,1\}^n)^{\Hom(K, \overline{\Q}_p)}$, then $Y^{\mu, \tau}_{\rhobar}$ is normal and $Y^{\mu, \tau}_{\rhobar} \otimes \F$ is reduced.  
\end{cor}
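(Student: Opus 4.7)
The strategy is to propagate the geometric properties of the local model $M(\mu)$ (from Theorem~\ref{locmodels}) along a chain of (formally) smooth morphisms ending at $Y^{\mu,\tau}_{\rhobar}$, using that normality and reducedness of the special fiber are preserved by smooth morphisms and descend along smooth covers. Explicitly, I would argue along the diagram
\[
M(\mu) \xleftarrow{\Psi^{\mu}} \widetilde{Y}^{\mu,\tau} \xrightarrow{\pi^{\mu}} Y^{\mu,\tau} \longleftarrow Y^{\mu,\tau}_{\rhobar},
\]
in which all arrows except possibly the last are already known to be smooth.

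For the first three arrows: since every component of $\mu$ lies in $\{0,1\}^n$, i.e.\ is minuscule, the minuscule case of Theorem~\ref{locmodels} (originally Theorem~B of~\cite{PR1}) directly gives that $M(\mu)$ is normal with reduced special fiber. By Theorem~\ref{main thm in body}, $\Psi^{\mu}$ is smooth, so these properties pull back to $\widetilde{Y}^{\mu,\tau}$. The morphism $\pi^{\mu}$ is a torsor for the smooth, geometrically connected group scheme $\prod_j \cP_{j,N}$ of Proposition~\ref{smoothcovers}, in particular a smooth surjection, so by faithfully flat descent the stack $Y^{\mu,\tau}$ is itself normal with reduced special fiber.

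For the last arrow, the preceding corollary tells us that the forgetful morphism $Y^{\mu,\tau}_{\rhobar} \to Y^{\mu,\tau}$ is formally smooth. To turn this into a scheme-theoretic statement, I would form the pullback
\[
\widetilde{Y}^{\mu,\tau}_{\rhobar} := Y^{\mu,\tau}_{\rhobar} \times_{Y^{\mu,\tau}} \widetilde{Y}^{\mu,\tau}.
\]
The projection $\widetilde{Y}^{\mu,\tau}_{\rhobar} \to Y^{\mu,\tau}_{\rhobar}$ is a smooth surjection, being the base change of the smooth torsor $\pi^{\mu}$. The other projection $\widetilde{Y}^{\mu,\tau}_{\rhobar} \to \widetilde{Y}^{\mu,\tau}$ is formally smooth by stability of formal smoothness under base change. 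Composing with $\Psi^{\mu}$ yields a formally smooth morphism $\widetilde{Y}^{\mu,\tau}_{\rhobar} \to M(\mu)$, which is promoted to a genuine smooth morphism by EGA~IV.17.5.1 once one checks it is locally of finite presentation (this follows from $R_{\rhobar}^{\mu,\tau,\cris}$ being Noetherian and $Y^{\mu,\tau}_{\rhobar}$ being projective over it). Hence $\widetilde{Y}^{\mu,\tau}_{\rhobar}$ is smooth over the normal, reduced-special-fiber scheme $M(\mu)$, inherits both properties, and then descends them along its smooth cover of $Y^{\mu,\tau}_{\rhobar}$.

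The main obstacle I anticipate is this last step: carefully controlling the interplay between formal smoothness and genuine smoothness when transferring geometric properties between a stack and a scheme. This is, however, the standard kind of manipulation that appears in~\cite{KisinPSS} and in~\cite{Levin1} for analogous ``framing'' morphisms, so I expect it to be routine once the base-change diagram above is set up carefully.
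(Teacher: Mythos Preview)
Your proposal is correct and follows essentially the same approach as the paper: transfer normality and reducedness of the special fiber from $M(\mu)$ via the local model diagram (Theorem~\ref{main thm in body}) and the formal smoothness of $Y^{\mu,\tau}_{\rhobar}\to Y^{\mu,\tau}$ from the preceding corollary. The paper's proof simply cites Theorem~\ref{locmodels} and Theorem~\ref{main thm in body}, leaving implicit both the use of the preceding corollary and the passage from formal smoothness to genuine smoothness that you have spelled out; one minor caveat is that your attribution of the minuscule case to \cite{PR1} applies only when $P=G$, whereas here the parahoric level is nontrivial, so you should invoke the general statement of Theorem~\ref{locmodels}.
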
 
\begin{proof} This follows directly from Theorem \ref{locmodels}, Theorem \ref{main thm in body} and Corollary~\ref{forgetful map formally smooth}.  
\end{proof}

\end{document}